\newcommand{\R}{\mathbb{R}}
\newcommand{\Z}{\mathbb{Z}}
\newcommand{\C}{\mathbb{C}}
	\definecolor{alizarin}{rgb}{0.82, 0.1, 0.26}
\numberwithin{figure}{section}
\numberwithin{equation}{section}
\numberwithin{table}{section}
\let\c@equation\c@figure
\let\c@table\c@figure
\let\c@algorithm\c@figure
\newtheorem{theorem}[equation]{Theorem}
\newtheorem{proposition}[equation]{Proposition}
\newtheorem{lemma}[equation]{Lemma}
\newtheorem{corollary}[equation]{Corollary}
\newtheorem{conjecture}[equation]{Conjecture}
\newtheorem{assumption}[equation]{Assumption}
\theoremstyle{definition}
\newtheorem{definition}[equation]{Definition}
\newtheorem{remark}[equation]{Remark}
\newtheorem{example}[equation]{Example}
\newcommand{\eqdef}{\;{:=}\;}
\newcommand{\op}{\operatorname}
 \DeclarePairedDelimiter\floor{\lfloor}{\rfloor}  % floor 
\newcommand{\inner}{\text{in}}
\newcommand{\outter}{\text{out}}
\newcommand{\vol}{\text{vol}}
\newcommand{\per}{\text{per}}
\newcommand{\ds}{\displaystyle}
\newcommand{\embeds}{{\stackrel{s}{\hookrightarrow}}}
\newcommand\bigsubset[1][1.19]{%
   \mathrel{\vcenter{\hbox{\scalebox{#1}{$\subset$}}}}}
\begin{document}
\title[Infinite staircases]
{On infinite staircases in toric symplectic  four-manifolds}

\author[Cristofaro-Gardiner]{Dan Cristofaro-Gardiner}
\address{University of Maryland}
\email{dcristof@umd.edu}

\author[Holm]{Tara S. Holm}
\address{Cornell University}
\email{tsh@math.cornell.edu}

\author[Mandini]{Alessia Mandini}
\address{Universidade Federal Fluminense}
\email{alessia\_mandini@id.uff.br}

\author[Pires]{Ana Rita Pires}
\address{University of Edinburgh}
\email{apires@ed.ac.uk}

\date{7 August 2024}

\begin{abstract}
An influential result of McDuff and Schlenk asserts that the function that encodes when a four-dimensional symplectic ellipsoid can be embedded into a four-dimensional ball has a remarkable structure: the function has infinitely many corners, determined by the odd-index Fibonacci numbers, that fit
together to form an infinite staircase.  

This work has led to considerable interest in understanding when the ellipsoid embedding function for other symplectic $4$-manifolds is partly described by an infinite staircase.   
We provide a general framework for analyzing this question for a large family of targets, called finite type convex toric domains, which we prove generalizes the class of closed toric symplectic four-manifolds.  When the target is of finite type, we prove that any infinite staircase must have a unique accumulation point $a_0$, given as the solution to an explicit quadratic equation.  Moreover, we prove that the embedding function at $a_0$ must be equal to the classical volume lower bound.  In particular, our result gives an obstruction to the existence of infinite staircases that experimentally seems strong.  Our methods apply equally well to any closed manifold which is a blowup of the complex projective plane.

In the special case of rational convex toric domains, we can say more.  We conjecture a complete answer to the question of existence of infinite staircases, in terms of six families that are distinguished by the fact that their moment 
polygon is reflexive.  We then provide a uniform proof of the existence of infinite staircases 
for our six families, using two tools.
For the first, we use recursive families of almost toric fibrations to find symplectic embeddings into closed symplectic manifolds.  
In order to establish the embeddings for convex toric domains, we prove a result of potentially independent interest:
a four-dimensional ellipsoid embeds 
into a closed toric symplectic  four-manifold if and only if it can be embedded into a corresponding
convex toric domain.
For the second tool,
we find recursive families of convex lattice paths that provide obstructions to embeddings.  
We conclude by connecting our conjecture that these are the only infinite staircases among rational convex toric domains 
to a question in number theory related to a classic work of
Hardy and Littlewood.
\end{abstract}

\maketitle

\tableofcontents

\section{Introduction}

In the past two decades,
there has been 
considerable interest in and progress on the question of whether there is an embedding
$$
(M,\omega_M) \embeds (N,\omega_N)
$$
preserving symplectic structures, or whether the existence of such a map is in some way obstructed.
On the one hand, Local Normal Form theorems and clever constructions like symplectic folding and symplectic inflation
allow us to find embeddings.  On the other hand,  
there are 
well-developed tools involving 
pseudo-holomorphic curves that provide numerous obstructions to these maps.  

A landmark result about this is the celebrated work of 
McDuff and Schlenk, 
who completely determined when a four-dimensional ellipsoid 
$$ E(a,b) = \left\lbrace (z_1,z_2)\in\mathbb{C}^2: \pi \left(\frac{|z_1|^2}{a} + \frac{|z_2|^2}{b} \right) < 1\right\rbrace$$ 	
can be symplectically embedded into a  ball $B(r)=E(r,r)$.  They found that when the ellipsoid is close to round, the answer is extremely delicate, determined by the odd-index Fibonacci numbers and the Golden Mean.  On the other hand, if the ellipsoid is sufficiently stretched then the only obstruction to an embedding is the classical volume obstruction.

Since the work of McDuff and Schlenk, many examples of infinite staircases have been found; we will survey what is known below.  There is moreover an extensive literature on
symplectic embedding problems where the domain is an ellipsoid:
\cite{CV, chls, cg ellipsoid, cgfs polydisk,  cghind, ghost, cgk, frenkelmuller, hind, hindkerman, h, qech, mcd, m2, 
m3, mcduffschlenk, kyler1, kyler2, usher}.   However, a general theory of infinite staircases does not currently exist.  
We do not know how characteristic infinite staircases are for symplectic embedding problems. 
For any fixed target, we do not know how to determine if there is an infinite staircase.  
We 
also 
do not 
know if all examples of this phenomenon can be unified in an elegant way or whether 
 their corresponding symplectic embeddings share common features.  There are other mysteries: 
for instance, in every  target known to admit an infinite staircase, 
the stairs alternate between being horizontal and being linear  with no constant term.  

Our interest here is in taking some useful first steps towards better understanding these questions.

To fix notation, let $X$ be a symplectic $4$-manifold.  We write $E\embeds X$ to mean that there is a symplectic embedding of $E$ into $X$, and define the \textbf{ellipsoid embedding function} of $X$ by
\begin{equation}\label{definition of c}
c_X(a):=\inf\left\{\lambda\  \big| \ E(1,a)\embeds\lambda X\right\}, \mbox{ for } a\geq 1,
\end{equation}
where $\lambda X$ represents the symplectic scaling $(X,\lambda\cdot \omega)$ of
$(X,\omega)$.
We could have defined the function
for $a > 0$, 
but there is a symmetry across $a=1$, making this redundant. This is a nondecreasing function; it is also continuous, although we will not prove that in full generality here. Inspired by the McDuff-Schlenk result, we make the following definition. 

\begin{definition}
We say that the ellipsoid embedding function $c_X(a)$ has an \textbf{infinite staircase}
if its graph has infinitely many non-smooth points.
\end{definition}

For a general $X$ with no further assumptions, computing the function $c_X(a)$ exactly or even determining whether or not it has an infinite staircase is presumably impossible due to the subtle mix of obstructions and constructions alluded to above.  For example, when $X$ is star-shaped domain, it is known that the periods of certain Reeb orbits on the boundary give obstructions to the existence of an embedding. 
Moreover one has the impression that a certain amount of additional structure on $X$ is 
required for an infinite staircase to exist.  So, one should make some restrictions on $X$.  The starting point for our investigations is the following question.  Let $M$ be a {\bf closed toric symplectic four-manifold} associated to a {\bf Delzant moment polygon} $\Omega$ in $\R^2$; this is a rich source of interesting examples.  Can we determine from $\Omega$ whether or not $c_M$ has an infinite staircase?  The answers we have found 
are governed by beautiful combinatorics and
number theory.

\begin{center}
  \begin{figure}[ht]
\begin{overpic}[%grid,
scale=1,unit=1mm]{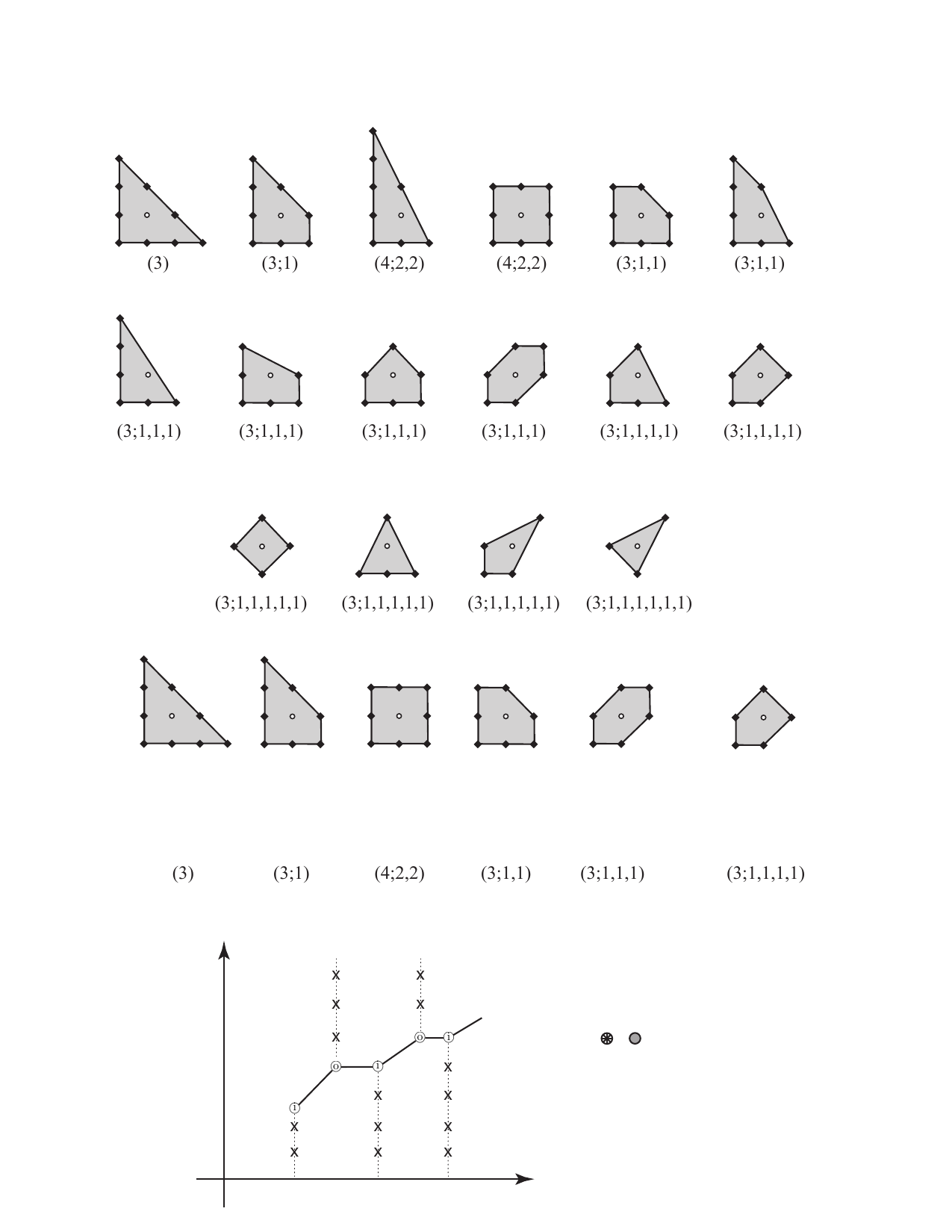}
   \put(7,3){(a)}
   \put(32,3){(b)}
   \put(57,3){(c)}
   \put(81,3){(d)}
   \put(107,3){(e)}
\end{overpic}
\caption{This illustrates several regions in $\R^2_{\geq 0}$ that are Delzant polygons (with bottom left corner at the origin).
These polygons correspond to the closed manifolds
(a) $\C P^2$; (b) $\C P^2 \# \overline{\C P}^2$; (c) $\C P^1\times{\C P}^1$; (d) $\C P^2 \# 2\overline{\C P}^2$;
and (e) $\C P^2 \# 3\overline{\C P}^2$.
As we will see, the ellipsoid embedding functions for these examples
admit infinite staircases. }

\label{fig:delzant}
\end{figure}
\end{center}

In fact, most of our results should hold for more general closed four-manifolds $M$, for example rational $M$ or Hamiltonian $S^1$-manifolds, or even uniruled $M$, see \S\ref{sec:other}, but that is not our focus here.

To state our results, we now introduce the related notion of toric domains.
A 4-dimensional \textbf{toric domain} $X$ is the preimage of a domain $\Omega\subset \mathbb{R}_{\geq 0}^2$ under the moment map 
$$\mu:\mathbb{C}^2\to\mathbb{R}^2, \qquad (z_1,z_2)\mapsto(\pi |z_1|^2,\pi |z_2|^2).$$ 
For example, if $\Omega$ is the hypotenuse-less triangle with vertices $(0, 0)$, $(a, 0)$, and $(0, b)$, then $X_\Omega$ is the open ellipsoid $E(a, b)$.
Following the notation set forth in \cite[Definition~1.1]{dan}, a \textbf{convex toric domain} is the preimage under $\mu$ of a closed 
region $\Omega\subset \mathbb{R}_{\geq 0}^2$ that is convex, connected, and contains the origin in its interior (in the
subspace topology on $\mathbb{R}_{\geq 0}^2\subset \R$).
We denote this $X_\Omega=\mu^{-1}(\Omega)$
and call the region $\Omega$ the {\bf moment polygon} of 
$X_{\Omega},$ in 
analogy with the case of closed toric symplectic manifolds, 
reviewed in \S\ref{subsec:toric}.
By contrast, we note that $X_\Omega$ is a manifold with boundary,
while the closed toric symplectic manifold is a compact
manifold (without boundary).  We illustrate
five examples of moment polygons for convex toric domains in 
Figure~\ref{fig:delzant}.  For example, the convex toric domain
corresponding to the region Figure~\ref{fig:delzant}(a)
is the closed ball $\overline{B(3)}$ and the toric symplectic manifold corresponding to this Delzant polygon is $\C P^2$ obtained by collapsing the boundary of $\overline{B(3)}$ by the Hopf fibration.

One motivation for studying convex toric domains comes from 
the following result that we prove, 
which ties together the ellipsoid embedding functions for
closed toric manifolds and convex toric domains.   
This result features essentially in our proof of 
Theorem~\ref{thm:main} as well.

\begin{theorem}\label{prop:mainproposition}
Let $\Omega\subset \mathbb{R}_{\geq0}^2$ be a convex region 
that is also a Delzant polygon for a closed toric symplectic four-manifold $M$.  
Then there exists a symplectic embedding
\begin{equation}\label{eqn:desiredembedding}
E(d,e) \embeds M
\end{equation}
if and only if there exists a symplectic embedding
\begin{equation}\label{eqn:desiredembedding2}
E(d,e) \embeds X_\Omega.
\end{equation}
\end{theorem}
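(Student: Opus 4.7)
The statement consists of two implications, which I would handle separately. For the easy direction \eqref{eqn:desiredembedding2} $\Rightarrow$ \eqref{eqn:desiredembedding}, the plan is to construct a symplectic open embedding $\iota : X_\Omega^\circ \embeds M$, where $X_\Omega^\circ$ denotes the interior of $X_\Omega$ in $\mathbb{C}^2$. Since $\Omega$ is Delzant, $M$ is built from $\Omega$ via Delzant's construction, and action-angle coordinates give a canonical $T^2$-equivariant symplectomorphism between $\mu^{-1}(\inte(\Omega))$ and $\mu_M^{-1}(\inte(\Omega))$. This identification extends across the edges of $\Omega$ lying on the coordinate axes, since both $\mathbb{C}^2$ and $M$ carry the same standard Darboux model over a neighborhood of such an edge (and of an axis-vertex of $\Omega$). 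The resulting map $\iota$ is a symplectic open embedding onto the complement of the toric divisor $D \subset M$ dual to the non-axis edges of $\Omega$. Since $E(d,e)$ is open, any symplectic embedding $\phi : E(d,e) \embeds X_\Omega$ has image in $X_\Omega^\circ$, and post-composing with $\iota$ yields the required embedding $E(d,e) \embeds M$.

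The converse direction \eqref{eqn:desiredembedding} $\Rightarrow$ \eqref{eqn:desiredembedding2} is the core content of the theorem. Given $\phi : E(d,e) \embeds M$, the plan is to modify $\phi$ so that its image avoids $D$; pulling back through $\iota$ then gives an embedding into $X_\Omega^\circ \subset X_\Omega$. Concretely, I would first show that for every $\epsilon > 0$ there is a symplectic embedding $E(d-\epsilon, e-\epsilon) \embeds M \setminus D$, and then pass to the limit $\epsilon \to 0$ by continuity and monotonicity of the ellipsoid embedding capacity to conclude $E(d,e) \embeds X_\Omega$. The $\epsilon$-step I would carry out by a symplectic inflation / Moser argument along $D$: using the standard symplectic tubular neighborhood of each $2$-sphere component of $D$, one deforms $\omega_M$ within its cohomology class to a family of forms that concentrate arbitrary amounts of symplectic area in thinner and thinner neighborhoods of $D$; Moser's theorem then converts the deformation into a symplectomorphism of $M$ which pushes a neighborhood of $D$ into a set disjoint from $\phi(E(d-\epsilon, e-\epsilon))$.

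The main obstacle will be controlling this inflation/displacement argument when some components of $D$ have negative self-intersection, as can happen for Hirzebruch-type summands of $M$. One way to address this is to reduce to the non-negative case by a sequence of symplectic blow-downs of exceptional spheres disjoint from $\phi(E(d,e))$, applying an Opshtein-style displacement result in the blown-down manifold, and then transporting the modified embedding back. A second approach is to work directly with an almost-complex structure $J$ on $M$ adapted to $D$ and to use $J$-holomorphic foliations by spheres in the classes dual to the edges of $\Omega$, together with positivity of intersections, to realize the required displacement. The key technical input in either case is the $4$-dimensional fact that an embedded open ellipsoid can be pushed off any codimension-$2$ symplectic submanifold after an arbitrarily small shrinkage, which I would formulate and prove as a lemma.
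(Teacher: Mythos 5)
Your easy direction coincides with the paper's: both factor an embedding into $X_\Omega$ through $\op{int}(X_\Omega)\hookrightarrow M$, the latter being the complement of the toric divisor $D$, identified by action-angle coordinates.

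For the hard direction, the paper takes a different and more packaged route. It first reduces from open to closed ellipsoids via \cite[Corollary~1.6]{dan}: it suffices to embed $(1-\varepsilon)\overline{E(d,e)}\embeds X_\Omega$ for all $\varepsilon$. It then approximates by a \emph{rational} ellipsoid $E(d',e')$ with $(1-\varepsilon)\overline{E(d,e)}\subset E(d',e')\subset E(d,e)$; rationality is essential because the weight expansion of $e'/d'$ is then finite. Blowing $M$ up along $\overline{E(d',e')}$ using its weight expansion, and combining with the corner chops that produce $\Omega$ from the standard simplex, yields a blow-up symplectic form on $\mathbb{C}P^2\#(n+m)\overline{\mathbb{C}P}^2$; by \cite[Proposition~6]{h} this is equivalent to a closed ball packing $\bigsqcup\overline{B(a_i)}\sqcup\bigsqcup\overline{B(b_i)}\embeds B(b)$, and then \cite[Theorem~2.1]{dan} converts that ball packing into the desired embedding into $X_\Omega$. (The $\mathbb{C}P^1\times\mathbb{C}P^1$ case is reduced to the projective-plane case by one small blowup.) Thus the heavy lifting — symplectic inflation and $J$-holomorphic curve theory — is outsourced to the cited McDuff/Cristofaro-Gardiner results rather than re-run from scratch.

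Your proposal attempts to prove directly the geometric statement that the ellipsoid image can be isotoped off $D$, and there are concrete gaps here. First, you cannot ``deform $\omega_M$ within its cohomology class to concentrate arbitrary amounts of symplectic area near $D$'': by Moser, a family of cohomologous forms on a closed manifold gives symplectomorphic manifolds, and the total area paired with $[D]$ is fixed by $[\omega_M]\cdot[D]$. Inflation along $D$ genuinely changes $[\omega]$, and using it here requires the blow-up/relative-inflation machinery that the paper instead imports via citation. Second, the proposed lemma — ``an embedded open ellipsoid can be pushed off any codimension-$2$ symplectic submanifold after an arbitrarily small shrinkage'' — is false in that generality (e.g.\ a large-degree symplectic curve in $\mathbb{C}P^2$ cannot in general be displaced from a nearly-filling ball), and must be restricted to the specific normal-crossing toric divisor dual to the non-axis edges, where the proof becomes essentially equivalent to the ball-packing formalism you are trying to avoid. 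Third, your plan omits the rational-aspect-ratio approximation and the open-vs-closed reduction, both of which the paper uses essentially: the finiteness of the weight expansion (needed to make sense of the blow-ups) requires rationality, and the limiting step you invoke at the end (``pass to the limit $\varepsilon\to 0$'') is exactly \cite[Corollary~1.6]{dan}, a nontrivial result that should be cited rather than asserted. Your fallback strategies (blow down exceptional spheres à la Opshtein, or use $J$-holomorphic sphere foliations) are in the right spirit, but carrying them out rigorously would amount to reproving the results the paper cites.
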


\begin{remark}
Delzant proved that toric symplectic manifolds are in one-to-one
correspondence with their moment map images, which must be 
Delzant polytopes \cite{delzant}.
Generalizing this to the orbifold case, Lerman and Tolman
find a one-to-one correspondence between toric symplectic orbifolds
and labeled simple, rational polytopes \cite{lerman-tolman}.
Theorem~\ref{prop:mainproposition} likely extends to some 
class of orbifolds.
Proofs for specific examples
may be straightforward, but even the starting point
for a more general statement in the orbifold setting is beyond
the scope of the current paper.
\end{remark}

Thus, from the point of view of the function $c_X$, convex toric domains significantly generalize closed toric manifolds.  In fact, we can relate embeddings into convex toric domains to embeddings into closed manifolds in a slightly more general context, 
including some well-known examples, for example  equilateral pentagon space: see
Corollary~\ref{prop:mainproposition2} and Lemma~\ref{rem:polygon}.

For a general convex toric domain $X$, the embedding function $c_X(a)$ has an interesting qualitative  structure.  
For a fixed $X$, the \textbf{volume curve} is the curve $y=\sqrt{\frac{a}{\vol}}$
and the {constraint} $$c_X(a)\geq \sqrt{\frac{a}{\vol}}$$  
holds because
\begin{eqnarray*}
    E(1,a)\embeds\lambda X & \Rightarrow & \text{volume}(E(1,a))\leq\text{volume}(\lambda X)\\
& \Leftrightarrow & a\leq \lambda^2 \text{volume}(X)\\
& \Leftrightarrow & \lambda\geq \sqrt{\frac{a}{\vol}}.
\end{eqnarray*}
We will show in Proposition~\ref{prop:properties} that $c_X(a)$ 
is continuous and non-decreasing, but not generally $C^1$. For sufficiently large $a$, we also show that the function 
$c_X(a)$ remains equal to the volume curve: this is the phenomenon known as {\bf packing\footnote{\phantom{.}``Packing" refers to
fact that  embedding  $E(1,k)$ is equivalent to embedding $k$ disjoint balls $B(1)$.} stability}.
Moreover, the function $c_X(a)$ is piecewise linear 
when not equal to the volume curve, except at points that are limit points of singular\footnote{\phantom{.}We call a non-smooth point of $c_X$ a singular point, and we use these terms interchangeably.} points of $c_X$.      
We call these limit points  \textbf{accumulation points} and they are an important focus of this paper.

\begin{remark}
In \cite[Definition 1.1]{CV}, Casals and Vianna work with a different concept, a {\bf sharp} infinite staircase.
This is an infinite staircase where infinitely many of the 
non-smooth points must be on the volume curve. That notion therefore excludes the $J=3$ cases below (cf.\ 
Remark~\ref{rem: all cases} and Figure~\ref{fig:4pics}(b)).
\end{remark}

All known examples of infinite staircases are associated to convex toric domains and we now survey what is known. 
In the paper  \cite{cgk}, Cristofaro-Gardiner and Kleinman studied the ellipsoid embedding function
of an ellipsoid $c_{E(1,b)}(a)$ 
and found infinite staircases when $b=2$ and
$b=\frac32$.  Frenkel and M\"uller found an infinite staircase in the ellipsoid embedding function
for a polydisc $P(1,1)$  \cite{frenkelmuller}, where the function is governed by the
Pell numbers.  
Cristofaro-Gardiner, Frenkel, and Schlenk have shown that
the only infinite staircase in the ellipsoid embedding function for 
polydisks $P(1,b)$  with $b\in \mathbb{N}$ is when $b=1$ \cite{cgfs polydisk}.
By contrast,
Usher studied ellipsoid embedding functions for 
irrational polydisks $P(1,b)$  \cite{usher} and found the first infinite families of infinite
staircases.  Usher's families all have $b$ quadratic irrationalities of a special form.  
 
Further work, in part making use of our Theorem~\ref{satisfies quadratic equation} below,  by Bertozzi, Holm, Maw, Mwakyoma, McDuff, Pires, and Weiler \cite{project9}, Magill and McDuff \cite{mm}, and Magill, McDuff and Weiler \cite{mmw} studies ellipsoid embedding functions for one point blowups of $\mathbb{C}P^2$, varying the symplectic size of the blowup.
Like Usher, 
they establish infinite families of infinite staircases; 
they also identify infinitely many 
subintervals of $[0,1]$ such that for any $b$ in one of the subintervals,
there is no infinite staircase for the ellipsoid embedding function of
the symplectic blowup $\C P^2_1\#\overline{\C P}^2_b$.
We say more about their work at the end of Section \ref{sec:reflex} below.

\medskip

\subsection{Accumulation points for infinite staircases}
\label{sec:acc}

Our first main result is aimed at rooting out the ``germs" of infinite staircases.  
If the function $c_X(a)$ has an infinite staircase, then the singular points must accumulate at 
some set of points, otherwise this would contradict packing stability.  We show that if an infinite 
staircase exists, there is in fact a unique such accumulation point; moreover, it can be 
characterized as the solution to an explicit quadratic equation 
determined by $\Omega$.

We now make this precise, following \cite[\S2.2]{dan}.  To a convex toric domain $X_{\Omega}$ 
we can associate a \textbf{negative weight expansion}  
$(b;b_1,b_2,\ldots)$. To do so, first we need to 
define a $b$-triangle to be the triangle with vertices $(0,0)$, $(b,0)$ and 
$(0,b)$ or any $AGL(2,\mathbb{Z})$ transformation of it\footnote{\phantom{.}By 
$AGL(2,\mathbb{Z})$ transformation we mean a $GL(2,\mathbb{Z})$ 
transformation followed by an affine translation by a (real) vector.}. We proceed inductively: 
let $b$ be the smallest number such that $\Omega$ is contained in a $b$-triangle. 
If $\Omega$ 
equals that triangle, we are done. Otherwise, let $b_1>0$ be the largest value
such that $\Omega$ 
is contained in the original $b$-triangle minus a $b_1$-triangle that is removed at a 
corner of the $b$-triangle. If $\Omega$ equals this quadrilateral, we are done. 
Otherwise, let $b_2>0$ be the largest value such that $\Omega$ is contained in the previous 
quadrilateral minus a $b_2$-triangle that is removed at one of its corners. 
The removing of the $b_i$-triangles is reminiscent of what is done to the moment 
polytope when performing equivariant symplectic blowups at fixed points.
We note that when 
$\Omega$ is a lattice polygon, this procedure is finite.

We note that two different convex toric domains can have the same  
negative weight expansion, see Figure \ref{fig:twelve} for examples. We will  see  in 
Remark~\ref{rmk:dependsonly}
that  the relevant feature of a convex toric domain in the context of this paper is its 
negative weight expansion, and not the actual shape of $\Omega$.  
\begin{definition}\label{def:finiterational}
We say that a negative weight expansion 
$$(b;b_1,b_2,\ldots)$$ 
is \textbf{finite} if there are finitely 
many non-zero $b_i$'s, and we say that such a convex toric domain $X_{\Omega}$ has {\bf finite type}.
Given a finite type convex toric domain $X$ 
with negative weight expansion $(b;b_1,\ldots,b_n)$, we define:
\begin{equation*}\begin{split}
\per & = 3b-\sum_{i=1}^n b_i \\
\vol & = b^2-\sum_{i=1}^n b_i^2.
\end{split}\end{equation*}
We say that $X_\Omega$ is a \textbf{rational convex toric domain} if $b$, $b_1$, $\ldots$, 
and $b_n$ are all rational numbers.
\end{definition}

\begin{figure}[h!]
\centering
\includegraphics[width=0.9\textwidth]{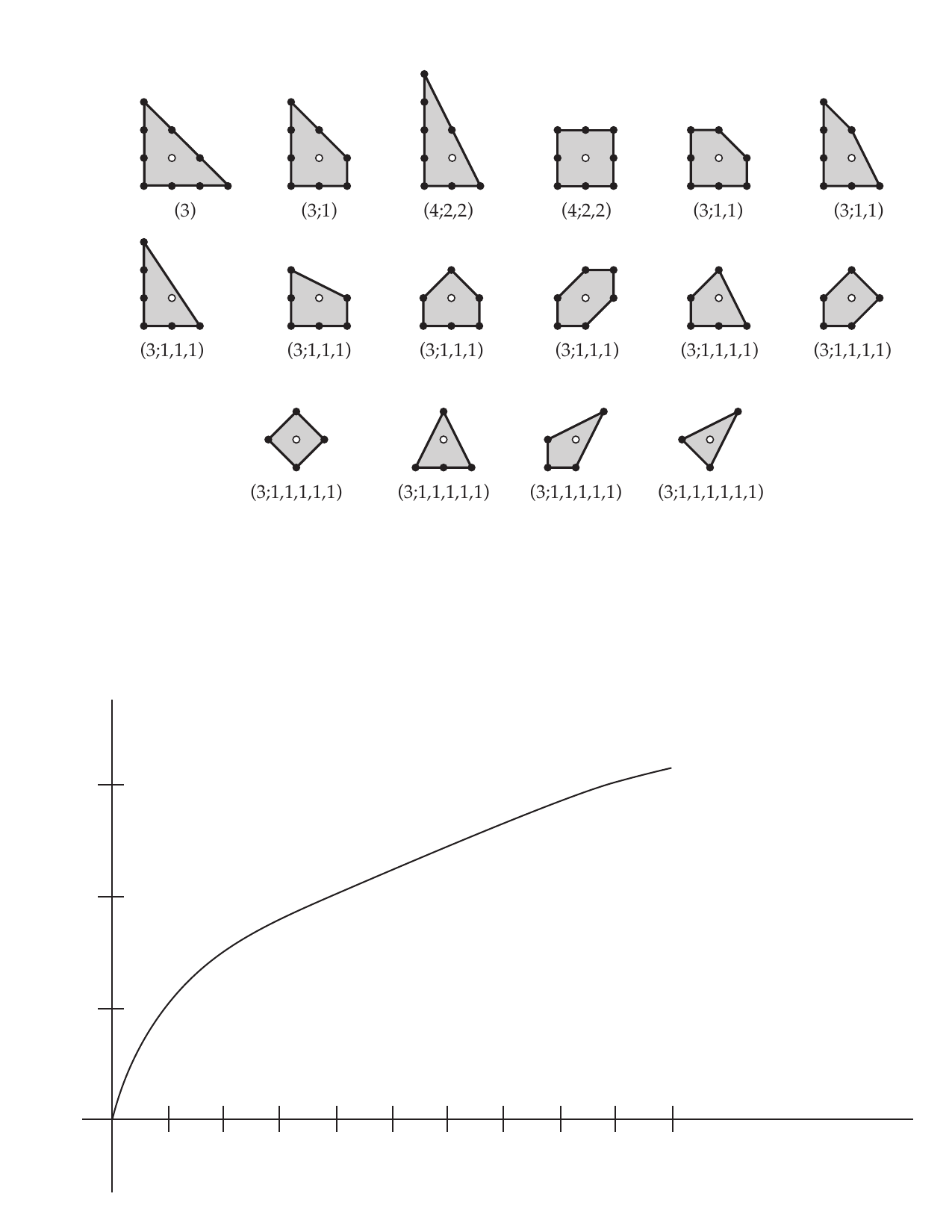}
\caption{Regions corresponding to finite type rational convex toric domains, and their negative weight expansions $(b;b_1,\ldots,b_n)$.
These negative weight expansions are precisely the ones that
feature in Theorem~\ref{thm:main}.
The negative weight expansions $(3;1)$ and $(3;1,1)$ correspond to the $J=3$ case and all others are $J=2$; cf.\
Table~\ref{table:recurrence} and Remark~\ref{rem: all cases}. Note that all of these polygons are
reflexive: each is a lattice polygon with precisely one interior lattice point.}
\label{fig:twelve}
\end{figure}

\newpage

\begin{remark}\label{rem:newest}
Finite type convex toric domains are a generalization of closed toric symplectic four-manifolds as targets for ellipsoid embedding functions 
in the following sense. Theorem \ref{prop:mainproposition} makes a correspondence between closed toric symplectic 
manifolds and convex toric domains that have the same moment image in $\mathbb{R}^2$. The proof of 
Theorem~\ref{prop:mainproposition} explains why the convex toric domains that arise in this way are always of finite 
type, while Lemma~\ref{rem:polygon} discusses an example of a finite type convex toric domain that does not arise 
from a closed toric symplectic four-manifold.

We note that all convex toric domains in Theorem \ref{thm:main} are rational.  The significance of this property is also highlighted by Conjecture \ref{conj:reflex}.
\end{remark}

\begin{remark}\label{rem:invt}
The quantities $\per$ and $\vol$ are, respectively, the affine 
perimeter\footnote{\phantom{.} We recall for the convenience of the reader that the affine perimeter of a polygon is the sum of the affine lengths of its sides.  The affine length $\ell_A$ of a vector $v \in \mathbb{R}^2$ is the unique $SL_2(\mathbb{Z})$-invariant function that vanishes on vectors of irrational slope, satisfies $\ell_A( (1,0) ) = 1$, and satisfies $\ell_A( c\cdot v) = c\cdot \ell_A(v)$ for all vectors $v$ and scalars $c$: to compute this, by the scaling axiom it suffices to compute 
for integral vectors, and then by invariance this is one less than the number of integer lattice points that the integral vector contains. This is sometimes called the {\bf rational length}, for example in \cite[\S 2.4]{kkp}.}
and twice the area of the region in $\mathbb{R}^2$ representing 
$X$. They are well-defined  invariants of this region.
In particular, $\vol$ is the {\bf symplectic volume} of $X$.
Note also that $\frac{\per^2}{\vol}$ is invariant under 
scaling of (the region representing) $X$.
\end{remark}

%\newpage

\noindent We can now state precisely our theorem about finding accumulation points.

\begin{theorem}\label{satisfies quadratic equation}
Let $X$ be a finite type convex toric domain. 
If the ellipsoid embedding function $c_X(a)$ has an infinite staircase 
then it accumulates at $a_0$,  a real solution\footnote{
\phantom{.}Note that if equation \eqref{eqn:theequation} has two distinct real solutions, then there is a unique solution greater than 1.  Thus, when $a_0$ exists as in the statement of the theorem, it is unique on the domain of $c_X$.}  
of the quadratic equation
\begin{equation}
\label{eqn:theequation}
a^2-\left(\frac{\per^2}{\vol}-2\right)a+1=0.
\end{equation}
Furthermore, at $a_0$ the ellipsoid embedding function touches the volume curve:
$$c_X(a_0)=\sqrt{\frac{a_0}{\vol}} \phantom{..}.$$
\end{theorem}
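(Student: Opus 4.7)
The plan is to break the proof into three stages: establishing existence of an accumulation point $a_0\in[1,\infty)$, showing $c_X(a_0)$ lies on the volume curve, and finally using the recursive combinatorial structure of staircase steps to force $a_0$ to satisfy \eqref{eqn:theequation}.

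First, I would combine packing stability with the continuity and monotonicity of $c_X$ from Proposition~\ref{prop:properties}: since $c_X(a)=\sqrt{a/\vol}$ for $a$ sufficiently large, any infinite set of singular points is confined to a bounded subinterval of $[1,\infty)$ and, by Bolzano--Weierstrass, has an accumulation point $a_0$.

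Second, I would show $c_X(a_0)=\sqrt{a_0/\vol}$. The inequality $c_X(a_0)\geq\sqrt{a_0/\vol}$ is automatic from the volume constraint and continuity. For the reverse, I would invoke the structural fact that $c_X$ is piecewise linear wherever it strictly exceeds the volume curve, with linear pieces coming from an obstruction theory for convex toric domains (e.g.\ ECH capacities, or exceptional classes on blowups --- here Theorem~\ref{prop:mainproposition} is relevant); in any compact interval bounded away from the volume curve only finitely many pieces can appear. If $c_X(a_0)>\sqrt{a_0/\vol}$, a neighborhood of $a_0$ would lie in such an interval, giving only finitely many singular points nearby and contradicting accumulation. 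Uniqueness of $a_0$ in the domain of $c_X$ then follows a posteriori from the quadratic characterization, via the footnote in the theorem statement.

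Third, to derive equation \eqref{eqn:theequation}, I would analyze the tail of the staircase near $a_0$. Each step comes from an obstruction class; consecutive classes in an infinite staircase are related by a transformation determined by the affine geometry of $\Omega$, yielding a second-order linear recursion in the positions of successive singular points. The ratio of consecutive positions converges to a root of the characteristic polynomial of this recursion. Combining this with the tangency to the volume curve established in stage two pins down the polynomial: its two roots must multiply to $1$, which accounts for the constant term of \eqref{eqn:theequation}, and their sum is determined by the leading asymptotic behavior of the obstructions, which a direct computation using the negative weight expansion and the definitions of $\per$ and $\vol$ identifies as $\per^2/\vol-2$.

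The main obstacle I expect is the last stage: correctly identifying the recursion governing consecutive obstructions in a general finite type convex toric domain and showing that its trace equals $\per^2/\vol-2$. Stages one and two are comparatively soft, resting on continuity, monotonicity, packing stability, and finiteness of obstructions in compact regions bounded away from the volume curve. The genuine combinatorial and number-theoretic content of the theorem --- and the reason the invariant $\per^2/\vol$ appears at all --- is concentrated in the final recursion step.
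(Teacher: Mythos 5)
Your stages one and two are essentially correct and align with the paper's proof: the paper also uses packing stability to confine accumulation to a bounded interval, and also argues that $c_X(z_\infty)$ must sit on the volume curve because otherwise a neighborhood of $z_\infty$ would see $c_X$ uniformly above the volume bound, forcing a uniform bound on the degree $d$ of obstructive classes (via the paper's Lemma~\ref{lemma:lengths}\textup{(2)}), and a finite set of piecewise-linear obstructions cannot produce infinitely many singular points. So far so good.

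Your stage three contains a genuine gap. You posit that ``consecutive classes in an infinite staircase are related by a transformation determined by the affine geometry of $\Omega$, yielding a second-order linear recursion in the positions of successive singular points,'' and then plan to read off the quadratic \eqref{eqn:theequation} as the characteristic polynomial of that recursion. But the theorem is a pure obstruction result: it must hold for an arbitrary finite type convex toric domain, where no such recursive structure can be assumed. The recursion you describe is a feature of the specific examples in Theorem~\ref{thm:main} (Fibonacci, Pell, etc.), and even there it is a conclusion, not a hypothesis. There is no mechanism in a blind argument to produce a single $AGL_2(\mathbb{Z})$ transformation (or a fixed two-term recursion) relating successive obstructive classes for a generic domain; indeed, this is exactly the kind of thing that would need to be proved, and the paper never does so.

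The paper's actual route to \eqref{eqn:theequation} is quantitative and does not invoke any recursion. Starting from the Diophantine conditions \eqref{cond1}--\eqref{cond2} on an obstructive class $(d;\mathbf m)$ and the weight-expansion identities in Lemma~\ref{lemma:mcduff schlenk}, the paper writes $\mathbf m = \frac{d}{\lambda_a b}\mathbf w + \pmb{\epsilon}$, shows $\sum\epsilon_i^2 < 1$ (using $\mathbf w\cdot\pmb\epsilon>0$), and derives the identity
$$\left|-\sum_i\epsilon_i\right|=\left|1+\frac{d}{\lambda_a b}\left(f(p_0)-\tfrac1q\right)\right|,\quad\text{where }f(a)=a+1-\sqrt{a\cdot\tfrac{\per^2}{\vol}}.$$
Combined with a Cauchy--Schwarz bound $\left|\sum\epsilon_i\right|<\sqrt{L}$ and a preliminary estimate $q<4d/(\lambda_a b)$, this gives a bound on $d$ whenever $f(p_0)$ is bounded away from $0$. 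The zeros of $f$ are exactly the solutions of \eqref{eqn:theequation} (clear denominators), and the argument concludes by showing that if $z_\infty\ne a_0$ then $f$ is bounded away from $0$ on a neighborhood, forcing finitely many obstructive classes --- a contradiction with accumulation. So the invariant $\per^2/\vol-2$ enters not as a trace of a recursion but through the exact form of $f$, which is a direct consequence of \eqref{cond1} and $\sum a_i = a+1-\tfrac1q$. Your recursion-based heuristic is closer to the asymptotic-perimeter explanation given in Remark~\ref{rmk:asymptotics}, which the authors explicitly flag as non-rigorous and not the route their proof takes.
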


We emphasize that, for any particular finite type target $X_{\Omega}$, Theorem~\ref{satisfies quadratic equation} leads 
to the following procedure for approaching the question of whether or not $X_{\Omega}$ has an infinite staircase.  
We compute the quantity $c_{X_\Omega}(a_0) - \sqrt{\frac{a_0}{\vol}} \ge 0$, which we call the {\bf staircase obstruction}.  
The staircase obstruction seems to give a strong indication about the possibility of an infinite staircase.
In our experience, it is almost always sharp.

If the staircase obstruction is positive, then an infinite 
staircase cannot exist.  For example, the staircase obstruction is positive in Figure~\ref{fig:4pics}(c) below, where we can see 
clearly that the ellipsoid embedding
function is obstructed at $a_0$, and so a toric domain with negative weight expansion $(4;2,1)$ 
cannot have an infinite staircase.  
Developing this idea, in the sequence of works \cite{project9, many, m, mm, mmw},  
Theorem~\ref{satisfies quadratic equation} is used
to identify intervals of $b$-values for which the ellipsoid embedding function of a toric domain with
negative weight expansion $(1;b)$ does not have an infinite staircase.

When the staircase obstruction vanishes, then by Theorem~\ref{satisfies quadratic equation}, 
if there is an infinite staircase, 
infinitely many of the steps must occur in a neighborhood of $a_0$.
In this case, 
one can attempt to explore the question numerically near $a_0$
to see if there is any evidence of an infinite staircase.
In all known examples, 
numerical exploration gives a very strong indication of whether or not an infinite 
staircase exists, and then further 
ideas can 
be used to rigorously establish the existence or nonexistence of an infinite staircase.  
We explain some instances of this analysis in Appendix~\ref{appendix}.

\begin{remark}  There are only a handful of known families of examples where the staircase obstruction
vanishes but there is no staircase.   
One such example is shown in Figure~\ref{fig:4pics}(d).  The negative weight
expansion $(4;1,1,1,1)$ corresponds to $X_\Omega=E(3,4)$, as shown in Figure~\ref{fig:4-1-1-1-1-is-ellipsoid}. 
For this example, the first named author
has rigorously computed a precise formula for the ellipsoid embedding
function in a neighborhood of $a_0$ and thereby proved that 
it does not have an infinite staircase \cite[Sec. 2.5]{cg ellipsoid}.  At the time that the current 
paper first appeared, this was the only known example where the staircase obstruction vanishes but 
there is no infinite staircase.  Subsequent work \cite{project9, many, m, mm, mmw} has since identified 
other families of examples of this phenomenon, corresponding to one point blowups of $\mathbb{C}P^2$.  
These papers classify precisely which one-point blowups of $\C P^2$ have an infinite staircase and in them,
a similar phenomenon holds: in all the cases where the staircase obstruction vanishes but there is no infinite 
staircase, one can rigorously compute a precise formula for the embedding function to verify that an 
infinite staircase does not exist.  We note for the interested reader that the methods for analyzing these 
cases where the staircase obstruction vanishes differ between these works: in \cite{project9, cg ellipsoid}, 
the nonexistence of a staircase is proved by using the fact 
that ECH capacities are sharp, in connection with some new lattice point estimates; 
in \cite{many, m, mm, mmw} , it is established by an analysis of the reduction algorithm.  
For more about the reduction algorithm, we refer the reader for example to the discussion 
in \cite{cgfs polydisk}.
\end{remark}

\begin{remark}
Theorem~\ref{satisfies quadratic equation} has an interesting interpretation in terms of the asymptotics of ECH capacities.  
We review the theory of ECH capacities in Section \ref{sec:ech}.   Theorem~\ref{satisfies quadratic equation} 
can be interpreted as saying that at an accumulation point of singular points, the leading and subleading 
asymptotics of the ECH capacities of the domain and target agree.  For more about this, see Remark~\ref{rmk:asymptotics}.

It might also be interesting to note that in the case of a rational convex toric domain, 
the coefficients of \eqref{eqn:theequation} are rational.  
In particular, $a_0$ is a quadratic surd for these examples. This is potentially useful for ruling out infinite 
staircases, as we further explore in Section \ref{sec:conj}.  
\end{remark}

\subsection{Reflexive polygons and infinite staircases}
\label{sec:reflex}

Having explained in the previous section where infinite staircases must accumulate, 
we now turn our attention to finding and describing them. We begin by showing in Figure~\ref{fig:4pics}
the types of graphs we can produce of embedding functions using Mathematica.
These types of plots, analyzed via Theorem~\ref{thm:main}, were essential in our early investigations of infinite staircases.
This is discussed further in Appendix~\ref{appendix}.

\newpage 

\begin{figure}[htp] % not h only
\centering
\subfloat[ $(3;1,1,1,1)$]{%
\includegraphics[width=0.45\textwidth]{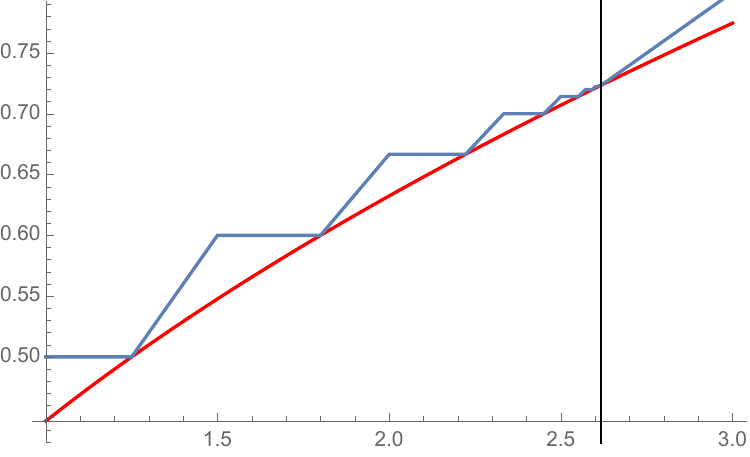}%
}\hfil
\subfloat[ $(3;1)$]{%
\includegraphics[width=0.45\textwidth]{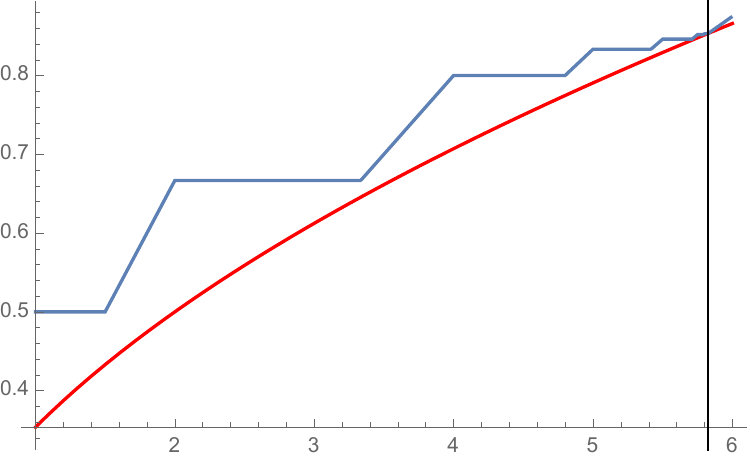}%
}

\subfloat[$(4;2,1)$]{%
\includegraphics[width=0.45\textwidth]{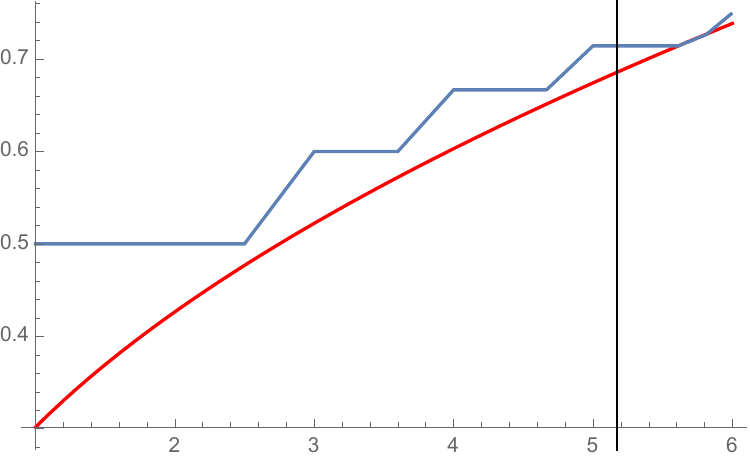}%
}\hfil
\subfloat[$(4;1,1,1,1)$]{%
\includegraphics[width=0.45\textwidth]{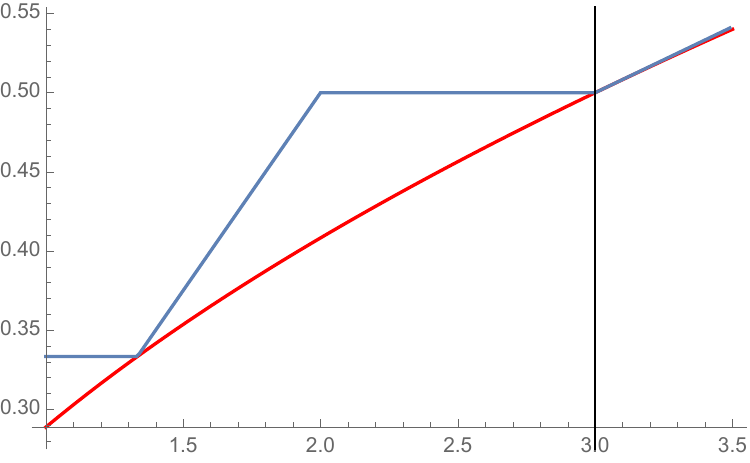}%
}
\caption{{\footnotesize Plots of ellipsoid embedding functions for different domains, 
labeled by their negative weight expansion.
The red curves are the volume curves and the vertical lines indicate where the 
accumulation points would necessarily be located, if a staircase existed, per
Theorem~\ref{satisfies quadratic equation}.
The top two plots have infinite staircases: in (a) we have a $J=2$ case, 
where the inner corners touch the volume curve (a sharp infinite staircase); and in (b)
we have a $J=3$ case, where the inner corners approach but do not touch the volume curve. 
The plots (c) and (d) do not have infinite staircases:
 (c) has non-zero staircase obstruction, while showing that (d) does not have a staircase is intricate \cite{cg ellipsoid}.}}
\label{fig:4pics}
\end{figure}

Our next result identifies infinite staircases for the 
ellipsoid embedding functions of twelve convex toric domains, 
including the already known ball, polydisk $P(1,1)$, and $E(1,\frac32)$. 
Our proof of Theorem~\ref{thm:main} provides a uniform approach to prove the existence
of all twelve in one fell swoop.
The graphs of these 
functions are related to certain recurrence sequences, which are given in Table \ref{table:recurrence}.  
We remark that the scale invariant quantity $\frac{\text{per}^2}{\text{vol}}=K+2$, together with the order of recurrence $2J$, determine the recurrence relation.\footnote{Each order $2J$ recurrence sequence actually consists of $J$ intertwined order 2 recurrence sequences, all with the same relation but different seeds. The sequences corresponding to $(4;2,2)$ and $(3;1)$ have  one of these subsequences in common, the one with $K=6$ and seeds $1,1$.}

\newpage

\renewcommand{\arraystretch}{1.5}\begin{table}[htbp]
\centering 
\tiny
\begin{tabular}{| c || c | c | c  | c | c |}
\hline

Neg.\ wt.\                      & Recurrence relation                       & Seeds  $g(0)$     &  $K=$ & $J$   & $a_0$     \\ 
expansion   &  $g(n+2J)=Kg(n+J)-g(n)$ & to $g(2J-1)$ & $\frac{\per^2}{\vol}-2$  &  & \\ \hline \hline
$(3)$                         & $g(n+4)=7g(n+2)-g(n)$                     & $2, 1, 1, 2$               & 7 & 2   & $\frac{7+3\sqrt{5}}{2}  $       \\  \hline
$(4;2,2)$                        & $g(n+4)=6g(n+2)-g(n)$                     & $1, 1, 1, 3$     & 6 & 2   &$3+2\sqrt{2}$ \\   \hline
$(3;1,1,1)$     &  $g(n+4)=4g(n+2)-g(n)$    &   $1, 1, 1, 2$   & 4 & 2 &$2+\sqrt{3}$  \\   \hline
$(3;1,1,1,1)$    & $g(n+4)=3g(n+2)-g(n)$   &  $1,2, 1, 3$  & 3 & 2 &  $\frac{3+\sqrt{5}}{2}  $ \\ \hline
$(3;1)$                      & $g(n+6)=6g(n+3)-g(n)$                     & $1,1, 1, 1, 2, 4$   & 6 & 3   & $3+2\sqrt{2}$    \\  \hline
$(3;1,1)$    & $g(n+6)=5g(n+3)-g(n)$ & $1, 1, 1, 1, 2, 3$    & 5 & 3 &  $\frac{5+\sqrt{21}}{2}  $  \\  \hline
\end{tabular}
\caption{The key recurrence relations.}
\label{table:recurrence}
\end{table}

\begin{theorem}\label{thm:main}
Let $X$ be a convex toric domain with negative weight expansion $(b;b_1,\ldots,b_n)$ 
equal to 
$$
(3)\ , \ (3;1)\ , \ (3;1,1)\ , \ (3;1,1,1)\ , \ (3;1,1,1,1)\ , \mbox{or } (4;2,2).
$$
Then the ellipsoid embedding function $c_X(a)$ has an infinite staircase
which alternates between horizontal lines and lines through the origin 
 connecting inner and outer corners 
$$(x^\inner_0,y^\inner_0),(x^\outter_1,y^\outter_1),(x^\inner_1,y^\inner_1),(x^\outter_2,y^\outter_2),\ldots
$$
with coordinates
$(x^\inner_n,y^\inner_n)$ given by
$$\left( \frac{g(n+J)\left( g(n+1)+g(n+1+J) \right)}{\left(g(n)+g(n+J)\right) g(n+1)},  \frac{g(n+J)}{g(n)+g(n+J)}  \right),$$
and 
$(x^\outter_n,y^\outter_n)$ given by
$$ \left( \frac{g(n+J)}{g(n)},\frac{g(n+J)}{g(n)+g(n+J)} \right).$$ 
\end{theorem}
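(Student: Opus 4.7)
The plan is to prove the theorem by establishing matching upper and lower bounds for $c_X$ at each of the claimed corners; the alternating horizontal/sloped shape of the staircase then follows from general properties of $c_X$. First I would verify directly from the stated formulas that $y^\outter_n = y^\inner_n$ (making the step from $x^\outter_n$ to $x^\inner_n$ horizontal) and that $(x^\inner_n, y^\inner_n)$ and $(x^\outter_{n+1}, y^\outter_{n+1})$ lie on a common line through the origin (making the step from $x^\inner_n$ to $x^\outter_{n+1}$ linear from the origin). Combined with the monotonicity, continuity, and scaling invariance of $c_X$ from Proposition~\ref{prop:properties}, this reduces the theorem to proving the equalities
\[
c_X(x^\outter_n) = y^\outter_n \qquad \text{and} \qquad c_X(x^\inner_n) = y^\inner_n
\]
for every $n \ge 0$.

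For the upper bounds $c_X(x^\outter_n) \le y^\outter_n$, I would construct a recursive family of almost toric fibrations. For each of the six families, the moment region $\Omega$ is the Delzant polygon of a closed toric symplectic four-manifold $M$; starting from the standard toric fibration of $M$, iterated nodal slides and mutations produce ATFs whose bases contain progressively thinner affine triangles. At stage $n$, the triangle admits a toric-style embedding of an ellipsoid of ratio $x^\outter_n$ and capacity $y^\outter_n$. The combinatorics of the mutation cycles is precisely encoded by $g(n+2J) = K g(n+J) - g(n)$, with $J$ equal to the number of mutation steps required to return to a configuration of the same type. Theorem~\ref{prop:mainproposition} then converts each embedding into $M$ into the corresponding embedding into $X_\Omega$.

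For the lower bounds $c_X(x^\inner_n) \ge y^\inner_n$, I would exhibit a recursive family of convex integral lattice paths $\Lambda_n$ and invoke the combinatorial formulas for ECH capacities of convex toric domains from \cite[Thm.~A.1, Cor.~A.12]{dan}. The target is to produce $\Lambda_n$ whose associated ECH-based obstruction reaches the value $y^\inner_n$ at $a = x^\inner_n$. The construction of the $\Lambda_n$ should mirror the mutation recipe from the embedding side, so that the vertices of $\Lambda_n$ are again dictated by the $g$-sequence and the match to the upper bound is automatic.

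Finally, the accumulation claim (and the tie-in with Theorem~\ref{satisfies quadratic equation}) reduces to a routine recurrence computation. Each of the six polygons is reflexive, and a direct calculation gives $\per = \vol$, so $K = \per^2/\vol - 2 = \vol - 2$. The length-$2J$ recurrence decouples into $J$ independent length-$2$ recurrences with characteristic polynomial $t^2 - K t + 1$, whose largest real root coincides with the positive solution $a_0$ of \eqref{eqn:theequation}. Consequently $g(n+J)/g(n) \to a_0$, so $x^\outter_n, x^\inner_n \to a_0$ and $y^\outter_n, y^\inner_n \to a_0/(1+a_0) = \sqrt{a_0/\vol}$. I expect the main obstacle to lie on the embedding side: devising ATF mutation recursions that work uniformly across all six families, especially the $J=3$ cases $(3;1)$ and $(3;1,1)$, where the inner corners lie strictly above the volume curve and the recursion cycles through three distinct polygon shapes before returning to the starting type.
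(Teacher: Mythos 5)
Your overall plan matches the paper closely: prove the staircase shape by combining matching one-sided bounds at the corners with the monotonicity, continuity, and scaling properties of $c_X$, using a recursive family of ATF mutations on the embedding side and a recursive family of convex lattice paths on the ECH-obstruction side. The recurrence/accumulation calculation at the end (including $\per = \vol$ and the identity $\frac{a_0}{1+a_0}=\sqrt{a_0/\vol}$) is correct.

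However, you have swapped which technique is applied at which family of corners, and this is a fatal sign error rather than a relabeling. You propose ATF embeddings to prove $c_X(x^\outter_n)\le y^\outter_n$ and lattice-path obstructions to prove $c_X(x^\inner_n)\ge y^\inner_n$. The paper (Propositions \ref{prop:outer} and \ref{prop:inner}) does the opposite: lattice paths give the \emph{lower} bound $c_X(x^\outter_n)\ge y^\outter_n$ at the outer corners and ATF mutations give the \emph{upper} bound $c_X(x^\inner_n)\le y^\inner_n$ at the inner corners. With $y^\outter_n=y^\inner_n=:y_n$ and $x^\outter_n<x^\inner_n$, the paper's inequalities pin down $c_X\equiv y_n$ on $[x^\outter_n,x^\inner_n]$ by monotonicity (lower bound propagates rightward from the left endpoint, upper bound propagates leftward from the right endpoint), and similarly the sloped segment is forced by the scaling property together with the collinearity of $(x^\inner_n,y^\inner_n)$, $(x^\outter_{n+1},y^\outter_{n+1})$ and the origin. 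With your swapped inequalities this mechanism breaks: an \emph{upper} bound at the left endpoint $x^\outter_n$ and a \emph{lower} bound at the right endpoint $x^\inner_n$ are mutually consistent but leave $c_X$ undetermined on the whole interval, and the scaling argument on the sloped segment likewise fails because you have only an upper bound where a lower bound is needed and vice versa. Concretely, the ATF base diagrams of the paper produce embeddings of ellipsoids with aspect ratio $a_n/b_n = x^\inner_n$, not $x^\outter_n$; your assertion that the mutated triangle yields an embedding at ratio $x^\outter_n$ is incorrect. To repair the proposal, interchange the roles: obstructions at outer corners (lower bound), embeddings at inner corners (upper bound).
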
 

\begin{remark}
The recurrence relations that appear in Table \ref{table:recurrence} do not immediately appear to be the 
ones previously associated to infinite staircases.  But a quick computation shows that for $(3)$, this
does recover the odd-index Fibonacci numbers McDuff and Schlenk found in \cite{mcduffschlenk}; for $(4;2,2)$ it recovers 
Pell and half-companion
Pell numbers as found by Frenkel and M\"uller \cite{frenkelmuller}; and for $(3;1,1,1)$ the sequences of Cristofaro-Gardiner 
and Kleinman \cite{cgk}.  Writing them in this 
uniform way simplifies the statement of Theorem~\ref{thm:main}.
\end{remark}

\begin{remark}
Combining Theorem~\ref{prop:mainproposition}, Lemma~\ref{rem:polygon}, and Theorem~\ref{thm:main}, 
we conclude that the ellipsoid embedding function 
$c_X(a)$ has an infinite staircase for 
symplectic forms on the compact symplectic manifolds
$\C P^1_2 \times \C P^1_2$ and $\C P^2_3\# k\overline{\C P}^2_1$ for $k=0,1,2,3,4$.  
The smooth polygons in Figure~\ref{fig:twelve} are Delzant polygons: they 
are the moment polygons of compact toric symplectic manifolds, namely
with underlying smooth manifold 
$\C P^1 \times \C P^1$ and the $k$-fold blowup $\C P^2\# k\overline{\C P}^2$ for $k=0,1,2,3$.
The only negative weight expansion from the list that does not have a smooth Delzant polygon representative
is $(3;1,1,1,1)$.  This manifold is well known not to admit a Hamiltonian circle or
$2$-torus action \cite{hausmann knutson}.  
We may identify this manifold as {equilateral pentagon space} and as such,
it is well known to admit a completely integrable system from {bending flows}
whose image is shown in the bottom right picture in Figure~\ref{fig:twelve}.  Lemma~\ref{rem:polygon}
allows us to make the same conclusions about embeddings into $\C P^2_3\# 4\overline{\C P}^2_1$.
\end{remark}

\begin{remark}\label{rem: all cases}
For each convex toric domain, the accumulation point of the infinite staircase is on the volume  curve. However, 
two fairly distinct behaviors can be observed, related to the order of the recurrence relation in Table~\ref{table:recurrence}. 
In the $J=2$ cases, the inner corners of the infinite staircase are on the volume  curve, whereas in the $J=3$ cases, 
they approach the volume  curve but never touch it. 
Examples are shown in Figure~\ref{fig:4pics}(a) and (b).
Wherever the staircase hits the
volume curve, it indicates that there is a full filling of the target by the ellipsoid.
The behavior when $J=3$ has not previously been observed for rational convex toric domains.

These two different behaviours can be seen explicitly in the Proof of Proposition \ref{prop:inner}, which following a beautiful idea of Casals and Vianna uses sequences of almost toric fibrations to construct symplectic embeddings corresponding to the inner corners of the staircase.
In the $J=2$ case, the base diagrams of the almost toric fibrations are triangles, which give full filling ellipsoids. In the $J=3$
case, the base diagrams are quadrilaterals and the embeddings are determined by the biggest triangle contained in each 
quadrilateral, and therefore do not constitute a full filling. See also \cite{CV} and the note at the end of the introduction of this manuscript.
\end{remark}

We complete the introduction with a conjecture that the list in Theorem~\ref{thm:main} is in a suitable sense exhaustive.

Recall that a convex lattice polygon is {\bf reflexive} if it has exactly one interior lattice point; this is equivalent to requiring that its dual polygon is also a lattice polygon.  
Pick's Theorem allows us to conclude that any lattice domain with one of the negative weight expansions listed in Theorem~\ref{thm:main} must have exactly one interior lattice point and thus is a reflexive lattice polygon. 
These are classified up to $AGL_2(\Z)$, there are sixteen such polygons: the twelve shown in Figure~\ref{fig:twelve} and the four in Figure~\ref{fig:the others}. These last four do not have infinite staircases as part of their
ellipsoid embedding function, see Remark \ref{rmk:the other four}.

\begin{conjecture}
\label{conj:reflex}
If the ellipsoid embedding function of a rational convex toric domain has an infinite staircase, then 
its moment polygon is a scaling of a reflexive polygon.  
\end{conjecture}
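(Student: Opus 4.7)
The plan is to combine Theorem~\ref{satisfies quadratic equation} with the explicit obstructions coming from convex lattice paths alluded to in the introduction, and to show that rationality plus the vanishing of the staircase obstruction forces $\Omega$ to be a scaling of one of the six reflexive polygons already appearing in Theorem~\ref{thm:main}. The first step is to suppose $X_\Omega$ is a rational convex toric domain whose ellipsoid embedding function has an infinite staircase. By Theorem~\ref{satisfies quadratic equation}, this staircase must accumulate at a unique point $a_0 > 1$ satisfying $a^2-(\per^2/\vol - 2)a + 1 = 0$, and moreover $c_{X_\Omega}(a_0)=\sqrt{a_0/\vol}$. Rationality of $(b;b_1,\ldots,b_n)$ makes the coefficient $\per^2/\vol - 2$ rational, and hence $a_0$ is a quadratic surd; this is the only place where rationality enters in an essential way, and it is what opens the door to Diophantine techniques.

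Next, I would use ECH capacities (or equivalently the combinatorial obstructions from convex lattice paths referenced in the introduction) to produce, for each candidate $\Omega$, an explicit lower bound for $c_{X_\Omega}(a_0)$. The key observation is that every lattice path $\Lambda$ whose slopes approximate $a_0$ supplies an obstruction of the shape
\begin{equation*}
c_{X_\Omega}(a_0) \;\geq\; \frac{L_\Omega(\Lambda)}{\ell_{a_0}(\Lambda)},
\end{equation*}
where $L_\Omega$ and $\ell_{a_0}$ are affine length functionals determined by $\Omega$ and by $E(1,a_0)$ respectively. In order for the staircase obstruction to vanish, every such ratio must be bounded above by $\sqrt{a_0/\vol}$, which imposes stringent Diophantine conditions on $a_0$ relative to the vertices of $\Omega$. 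Concretely, I would build $\Lambda$ out of successive convergents of the continued fraction expansion of $a_0$, so that the numerators and denominators of these ratios can be compared to the recurrences in Table~\ref{table:recurrence}; the six reflexive polygons should be exactly those for which the Pell-like recurrence matches the convergents of $a_0$ term by term.

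The hard part, and the main obstacle, is converting these infinitely many lattice-path inequalities into a clean statement that picks out precisely the six reflexive cases. The strategy I have in mind is to reduce the question to a Hardy--Littlewood-type problem about the distribution of multiples of $a_0$ modulo $1$, in the style of their classical work on lattice points in right triangles: the uniform bound on $L_\Omega(\Lambda)/\ell_{a_0}(\Lambda)$ translates into a statement that the partial sums $\sum \{n a_0\}$ remain within a prescribed constant band, which for a quadratic surd is controlled by its continued fraction expansion. My expectation is that such a bound holds only for finitely many quadratic surds $a_0$, and, after cross-referencing with the quadratic equation in Theorem~\ref{satisfies quadratic equation}, these should correspond to exactly the six reflexive negative weight expansions.

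The steps that would need the most care are: (i) making the ECH or lattice-path obstruction effective enough to pin down $c_{X_\Omega}(a_0)$ rather than merely bound it from below; (ii) formulating precisely the Hardy--Littlewood-style Diophantine inequality that captures the vanishing of the staircase obstruction; and (iii) ruling out the degenerate cases where $\Omega$ is rational but not a lattice polygon, where rescaling issues and subtle cancellations in the lattice-path sums could allow spurious examples to slip through. These are the portions of the argument that I expect to constitute genuinely new work beyond Theorems~\ref{prop:mainproposition}, \ref{satisfies quadratic equation}, and \ref{thm:main}, and they are the reason the statement is left as a conjecture rather than a theorem.
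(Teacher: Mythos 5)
The statement you are attempting to prove is Conjecture~\ref{conj:reflex}, which the paper does \emph{not} prove; Section~\ref{sec:conj} instead gives a heuristic reduction of it to a number-theoretic conjecture (Conjecture~\ref{conj:numbertheory}), under the additional Assumption~\ref{ass:irrational} that $a_0$ is irrational. Your plan tracks the paper's reduction quite closely: you correctly identify that Theorem~\ref{satisfies quadratic equation} pins down $a_0$ as a quadratic surd, that the vanishing of the staircase obstruction should be recast via ECH/Ehrhart counting, and that the resulting inequality should be analyzed with Hardy--Littlewood-type estimates on partial sums of fractional parts. You also correctly diagnose that closing this circle requires genuinely new Diophantine input, which is exactly why the authors leave it as a conjecture.

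What your sketch omits, however, is the bridge from the Diophantine dichotomy back to reflexivity, which is the one part of the argument the paper \emph{does} make rigorous. Specifically: (a) the paper invokes the Cristofaro-Gardiner--Li--Stanley classification (Theorem~\ref{thm:cglis}) of exactly when the Ehrhart function of $\Delta_{u,v}$ with $u/v$ irrational is a quasipolynomial, namely when $u+v=\alpha$ and $1/u+1/v=\beta$ with $\beta,\alpha\beta\in\mathbb{N}$; (b) translating $\alpha=\per$ and $\beta=\per/\vol$, this means $\per/\vol$ and $\per^2/\vol$ are integers, so that the rescaled polygon $\widetilde{\Omega}=(\per/\vol)\Omega$ is a lattice polygon with $\widetilde{\vol}/2 = \widetilde{\per} = \per^2/\vol$; and (c) Pick's Theorem then shows $\widetilde{\Omega}$ has exactly one interior lattice point, i.e.\ is reflexive. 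Your proposal jumps directly from ``bounded fractional-part sums only for finitely many surds'' to ``the six reflexive cases,'' without this Pick's-theorem pivot, and also without observing that there are sixteen reflexive polygons up to $AGL_2(\Z)$ — not six — and that the remaining four (with negative weight expansions $(3;1,1,1,1,1)$ and $(3;1,1,1,1,1,1)$, shown in Figure~\ref{fig:the others}) must be ruled out by a separate argument. Your item (iii) on non-lattice $\Omega$ is handled in the paper by normalizing to the primitive integral negative weight expansion before applying Wormleighton's quasipolynomial result, but this too deserves to be stated explicitly in your plan.
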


In particular, if Conjecture~\ref{conj:reflex} holds, we will see that the only rational convex toric domains whose 
ellipsoid embedding function has an infinite staircase are indeed the ones from Theorem~\ref{thm:main}
or any scaling of those, by ruling out the remaining four reflexive polygons.  
We will use Theorem~\ref{satisfies quadratic equation} to give some evidence supporting Conjecture~\ref{conj:reflex} in this paper.  As further evidence, Cristofaro-Gardiner's paper \cite{cg ellipsoid} applies Theorem~\ref{satisfies quadratic equation} to prove Conjecture~\ref{conj:reflex} in the special case of ellipsoids.  After the first version of this paper appeared, Conjecture~\ref{conj:reflex} was also verified in the special case of one-point blowups of $\mathbb{C}P^2$ in \cite{inprep}.  

In light of the recent work \cite{project9,usher} about infinite staircases for irrational targets, 
it is crucial in Conjecture~\ref{conj:reflex} that the toric domain be rational.   That said, it would be compelling to try to fit the  
examples from  \cite{project9,usher} into a more general framework in the spirit of Conjecture~\ref{conj:reflex}. For some very interesting work related to this in the case of one-point blowups, we refer the reader to \cite{project9,mm,mmw}. 
It also would be interesting to explore the almost toric fibration methods in this context; for example, 
perhaps the embeddings required in the irrational examples can also be constructed via polytope mutation; see also \cite{mm, mmw, mtalk}.

\subsection{Other closed targets}
\label{sec:other}

We have chosen to focus our attentions here on the toric case, but the methods should work equally well for some other natural targets.  For instance, recall that a closed symplectic $4$-manifold is {\bf rational} if it is a product of spheres or a blowup of $\mathbb{C}P^2$. The case of the product of spheres is already toric.  As for a blowup of $\mathbb{C}P^2$, Theorem~\ref{satisfies quadratic equation} still holds, by essentially the same argument, where in the definition of $\text{per}$ and $\text{vol}$, we use the size of the blowups instead of the negative weight expansion.

More generally, recall that a symplectic manifold is called {\bf uniruled} if there is a nonzero genus zero Gromov-Witten invariant with a point constraint.  For example, any Hamiltonian $S^1$-manifold is uniruled \cite{m4} as is any rationally connected manifold.  By a fundamental result of McDuff \cite{m1}, the only closed uniruled symplectic 4-manifolds are rational or {\bf ruled}, which means that the manifold is a blowup of an $S^2$ bundle over a Riemann surface.  We discussed the rational case above; as for a manifold which is ruled but not rational (which is called {\bf irrationally ruled}), one in fact should be able to show that infinite staircases do not ever exist.  Indeed, the ball-packing problem is known to have a straightforward answer for these targets, first by the work of Biran \cite[Cor. 5.C]{biran} in the case of products and then by Holm and Kessler in the general case, see \cite[Lem. 4.8]{hk}; and, by a straightforward modification of the argument in \cite{mcd}, it should be possible to show that an ellipsoid embeds into an irrationally ruled manifold if and only if a corresponding ball-packing problem, analogous to \eqref{eqn:embeddingequation} below, has a solution.  

We should also remark that for many targets, for example linear tori, the ellipsoid embedding problem should be completely unobstructed; see \cite{lms, ev}.  In fact, the embedding obstructions in our work here and in all known examples of infinite staircases are encoded in exceptional spheres, and these are most interesting in the irrational and ruled case by for example \cite[Cor. 3]{li}.

\subsection*{Organization of the paper} 
We begin in Section~\ref{preliminaries} by reviewing the basic properties of ellipsoid embedding functions, ECH capacities, 
convex lattice paths, obstructive classes, toric manifolds, and almost toric fibrations. Next, we explore the relationship
between convex toric domains and compact toric manifolds in Section~\ref{sec:passing to closed}, proving 
Theorem~\ref{prop:mainproposition}.  In Section~\ref{sec:pinpoint}, we turn to the proof of 
Theorem~\ref{satisfies quadratic equation}.  We are then able give our unified proof of the existence of the infinite
staircases (Theorem~\ref{thm:main}) in Section~\ref{section:existence}.  We conclude by describing evidence supporting our Conjecture, 
in Section~\ref{sec:conj},  that  the six examples described here are the only examples among rational convex toric domains.

The paper also includes three appendices: the first, Appendix~\ref{ap:convex lattice paths}, draws together some combinatorial 
data used to define families of convex lattice paths $\Lambda_n$ needed to find obstructions for the proof of Theorem~\ref{thm:main}.
The second, Appendix~\ref{ap:atfs}, describes seeds for the families of almost toric fibrations needed
to provide embeddings in the proof of Theorem~\ref{thm:main}.  Finally, in Appendix~\ref{appendix}, we recall the very
beginning of the project, including a surprise connection to the numbered stops on a Philadelphia subway line.  This appendix
also contains the Mathematica code we used to estimate ellipsoid embedding functions and search for infinite staircases.

\vskip 0.1in

\noindent {\bf Acknowledgements}.  
We are deeply grateful for the support of the Institute for Advanced Study,
without which we would not have been in the same place at the same time (with lovely lunches) to begin this project, nor
continue our conjectural work during an engaging Summer Collaborators visit.  
We were encouraged and helped along by conversations with:
Roger Casals, 
Helmut Hofer, 
Yael Karshon,
Alex Kontorovich, 
Nicole Magill, 
Emily Maw, 
Dusa McDuff, 
Peter Sarnak, 
Felix Schlenk, 
Ivan Smith,  
Margaret Symington, 
Renato Vianna,
and 
Morgan Weiler.
We would particularly like to thank Dusa McDuff for her help improving our exposition of Theorem \ref{satisfies quadratic equation}.  Finally, we would like to thank the anonymous referees for their helpful comments and corrections.

\vskip 0.1in

\noindent DCG was supported by NSF grant DMS 1711976 and the Minerva Research Foundation.

\noindent TSH was supported by NSF grant DMS 1711317 and the Simons Foundation.

\noindent AM was supported by FCT/Portugal through project 
PTDC / MAT-PUR/29447/2017.

\noindent ARP was supported by a Simons Foundation Collaboration Grant for Mathematicians.

\vskip 0.1in

\noindent {\bf Relation to \cite{CV}}.  
This article has been posted simultaneously with that of Roger Casals and Renato Vianna \cite{CV}.
Their Theorem~1.2 coincides with our Proposition~\ref{prop:inner} for the negative weight expansions
$(3)$, $(3;1,1,1)$, $(3;1,1,1,1)$, and $(4;2,2)$.  Both collaborations have benefitted from our
exchanges of ideas.  
Indeed, our initial proof  relied solely on ECH capacities, but required
 additional technical details and  guaranteed existence of an infinite staircase without completely
computing the embedding capacity function.

When Pires gave a talk on this topic at a 2017 KCL/UCL Geometry Seminar, Casals 
shared his beautiful idea: that mutation sequences of ATFs provided explicit symplectic embeddings for the Fibonacci staircase and should do the same whenever the target region $\Omega$ is a triangle, that is, corresponding to 
the negative weight expansions $(3)$, $(4;2,2)$ and $(3;1,1,1)$. 
Following this suggestion, we were then able to implement these ATFs explicitly and uniformly for all of our target regions, including the
non-triangular ones. 
This  
clarified and simplified our work and allowed us to pin down the embedding capacity function entirely, rather 
than just providing an existence proof for infinite staircases. 
Independently and without mutual knowledge, Casals and Vianna went on to explore the embeddings 
arising from mutation sequences of ATFs, also studying connections to tropical geometry and cluster algebras. They use tropical techniques to go from the base diagrams to the existence of embeddings. 
We tackle the same issues using two symplectic techniques.
First, we use Local Normal Form results  for toric actions on 
non-compact manifolds \cite[Theorem B.3]{KL:noncpt toric} to understand the relationship
between the base diagram and coordinate neighborhoods on the corresponding manifold.  This allows
us to establish Proposition~\ref{prop:triangleintothing}, which is a key ingredient
in the proof of Theorem~\ref{thm:main}.  Second, we
rely heavily on symplectic inflation type machinery pioneered by Lalonde-McDuff, 
applied here via \cite{dan} to state
Proposition~\ref{prop:embedding} and via \cite{mcduffschlenk} to state Proposition~\ref{either}. 
These results are key ingredients in the proofs of Theorem~\ref{prop:mainproposition},
Proposition~\ref{prop:properties}(4) and (5),
and Corollary~\ref{prop:mainproposition2}.

\section{Preliminaries and tools}\label{preliminaries}

\subsection{Properties of the ellipsoid embedding function}

\begin{proposition}
\label{prop:properties}
Let $X$ be a convex toric domain with finite negative weight expansion.  The ellipsoid embedding function $c_X(a)$
\begin{enumerate}
\item is non-decreasing;
\item has the following scaling property: $c_X(t\cdot a)\leq t\cdot c_X(a)$ for $t\geq 1$;
\item is continuous; 
\item is equal to the volume curve for sufficiently large values of $a$; \label{it:stab} 
\item is piecewise linear, when not on the volume curve, or at the limit of singular points. \label{it:piecewise} 
\end{enumerate}
\end{proposition}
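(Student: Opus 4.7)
The plan is to prove (1), (2), and (3) by elementary manipulations of the definition together with inclusions of ellipsoids, and then to reduce (4) and (5) to known packing stability results and the combinatorial description of $c_X(a)$ via ECH capacities, respectively. For (1), whenever $a\le a'$ the inclusion $E(1,a)\subset E(1,a')$ means that any embedding realizing $c_X(a')$ restricts to one of $E(1,a)$, so $c_X(a)\le c_X(a')$. For (2), rescaling $\C^2$ by the symplectomorphism $(z_1,z_2)\mapsto(\sqrt{t}z_1,\sqrt{t}z_2)$ on the domain side sends $E(1,a)$ to $E(t,ta)$ (while multiplying $\omega_0$ by $t$), and we have $E(1,ta)\subset E(t,ta)$ when $t\ge1$; so from an embedding $E(1,a)\embeds\lambda X$ we obtain $E(1,ta)\embeds t\lambda X$, whence $c_X(ta)\le t\cdot c_X(a)$.

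For (3), monotonicity from (1) ensures that one-sided limits of $c_X$ exist at every $a$. Right continuity follows from the sandwich $c_X(a)\le c_X(ta)\le t\cdot c_X(a)$ for $t\ge 1$, on letting $t\to 1^+$. For left continuity, given $a'<a$, write $a=ta'$ with $t=a/a'>1$; then (2) gives $c_X(a')\ge c_X(a)/t$, which combined with monotonicity forces $c_X(a')\to c_X(a)$ as $a'\to a^-$.

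Property (4) is where the main difficulty lies, since it is the content of packing stability for finite type convex toric domains, a deep theorem in its own right. In the Delzant polygon case, Theorem~\ref{prop:mainproposition} identifies embeddings $E(1,a)\embeds X_\Omega$ with embeddings into the associated closed toric symplectic four-manifold, where Biran's packing stability theorem for rational symplectic four-manifolds applies and can be combined with McDuff's theorem relating ball packings to ellipsoid embeddings. For general finite type convex toric domains whose weights may be irrational, I would approximate by rational Delzant polygons and pass to the limit using the continuity from (3), or alternatively appeal directly to packing stability results for convex toric domains established in \cite{dan} and its sequels.

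For (5), I would invoke the combinatorial formula $c_X(a)=\sup_{k\ge1} c_k(E(1,a))/c_k(X)$ in terms of ECH capacities, from Theorem~A.1 of \cite{dan}. Each $c_k(E(1,a))$ is continuous and piecewise linear in $a$ (being the $k$-th smallest element of $\{m+na\mid m,n\ge0\}$), so $c_X(a)$ is a pointwise supremum of countably many continuous piecewise linear functions. At a point $a$ strictly above the volume curve that is not an accumulation point of singular points, only finitely many indices $k$ are relevant in a neighborhood of $a$, so $c_X$ is locally a maximum of finitely many linear functions and hence piecewise linear. Ruling out the relevance of infinitely many indices in such a neighborhood uses the growth of $c_k(X)$ with $k$ together with packing stability from (4), which together imply that for $a$ in a bounded range the supremum is attained for $k$ in a finite set.
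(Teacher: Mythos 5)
Parts (1)--(3) match the paper's argument essentially verbatim: monotonicity via the inclusion $E(1,a)\subset E(1,a')$, scaling via $E(1,ta)\subset E(t,ta)$, and continuity from the squeeze $c_X(a)\le c_X(ta)\le t\cdot c_X(a)$. No issues there.

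For (4), your proposed route has a real gap. Theorem~\ref{prop:mainproposition} only applies when $\Omega$ is a Delzant polygon for a \emph{closed} toric four-manifold, and not every finite type convex toric domain is of that form. Your fallback is to ``approximate by rational Delzant polygons and pass to the limit using the continuity from (3),'' but part (3) gives continuity of $c_X(a)$ in the variable $a$ for a \emph{fixed} target $X$; it says nothing about continuity in the target, which is what the approximation argument would require. Your alternative citation to ``packing stability results for convex toric domains established in \cite{dan}'' is also not quite right --- \cite{dan} proves the ECH-sharpness result, not packing stability. The paper avoids all of this by working directly from the equivalence \eqref{eqn:embeddingequation}, reducing (via \cite[Cor.~1.6]{dan}) to a closed ball packing, and then invoking \emph{strong} packing stability \cite[Theorem~1]{bho} for an auxiliary closed manifold built from a slightly shrunk negative weight expansion: that theorem gives a threshold $\delta$ below which ball packings are governed purely by volume, and for $a$ large the weights $a_i/\lambda_a$ all fall below $\delta$ because they are bounded by $1/\lambda_a \to 0$. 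This works uniformly for all finite type domains with no approximation step.

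For (5), you take a genuinely different viewpoint --- working with the ECH capacity ratios $c_k(E(1,a))/c_k(X)$ rather than with the McDuff--Schlenk obstructive classes $(d;\mathbf{m})$ that the paper uses --- and the idea is sound, but the crux of the argument is left as an assertion. You need to justify that on an interval where $c_X$ is uniformly bounded away from the volume bound, only finitely many indices $k$ can realize the supremum, \emph{uniformly} in $a$. The paper gets the analogous finiteness via the explicit estimate \eqref{dbound}, which bounds the degree $d$ of any obstructive class that contributes when the gap $c_X(a)-\sqrt{a/\mathrm{vol}}$ is bounded below, combined with the fact that each such class, by Proposition~\ref{prop:there can be only one}, contributes a piecewise linear function with a \emph{unique} breakpoint on any obstructive interval. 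Your version would need a quantitative estimate playing the role of \eqref{dbound} (e.g.\ via the ECH Weyl law and its subleading term), and you should also handle the breakpoints of the individual functions $a\mapsto c_k(E(1,a))$: these are piecewise linear but not linear, so the finite supremum is piecewise linear but not automatically with the one-breakpoint structure the paper's obstructive-class lemma provides.
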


\begin{proof}

We prove only the first three points here, delaying the proof of the fourth to Section~\ref{sec:passing to closed}, and the fifth to Section~\ref{sec:pinpoint}, because the methods used to prove it are similar to the methods used to prove the results there.  The first three properties actually hold for general symplectic
$4$-manifolds $X$.

\begin{enumerate}
\item Let $a_1<a_2$. For all $\lambda$ such that $E(1,a_2)\embeds \lambda X$ we have $E(1,a_1)\embeds E(1,a_2)\embeds \lambda X$, so $c_X(a_1)\leq\lambda$.  We conclude that  $c_X(a_1)\leq c_X(a_2)$.

\item Let $t\geq 1$. For all $\lambda$ such that $E(1,a)\embeds\lambda X$ we have a sequence of embeddings
$E(1,t a)\embeds E(t,ta)\embeds t\lambda X$, which implies $c_X(t a)\leq t\lambda$. Therefore $c_X(t a)\leq t c_X(a)$.

\item Let $(a_i)_{i\in\mathbb{N}}$ be an increasing sequence converging to $a$, and define $t_i:=\frac{a}{a_i}>1$.  Using properties (2) and (1) we have
$$c_X(a)=c_X(t_i a_i)\leq t_i c_X(a_i)\leq t_i c_X(a).$$
Dividing through by $t_i$ and letting $i\to\infty$, we conclude that $\lim_{i\to\infty}c_X(a_i)=c_X(a)$.

Now let $(a_i)_{i\in\mathbb{N}}$ be a decreasing sequence converging to $a$, and define $t_i:=\frac{a_i}{a}>1$. Using properties (1) and (2) we have
$$c_X(a)\leq c_X(a_i)=c_X(t_i a)\leq t_i c_X(a),$$
in particular

\[  c_X(a) \le c_X(a_i) \le t_i c_X(a).    \]

Then, as above, letting $i\to\infty$ and using the fact that $\lim_{i \to \infty} t_i =1 $ implies that $\lim_{i\to\infty}c_X(a_i)=c_X(a)$. Therefore $c_X$ is continuous at $a$.
\end{enumerate}
\end{proof}

\subsection{ECH capacities}
\label{sec:ech}

Let $c_\text{ECH}(X)=(c_0(X),c_1(X),c_2(X),\ldots)$  represent the non-decr\-easing sequence of ECH capacities of the toric domain $X$,
as defined in \cite{qech}. The sequence inequality 
$$c_{ECH}(X) \le c_{ECH}(Y)$$
means that $c_k(X)\leq c_k(Y)$ for all $k\in\mathbb{N}_0$.

The sequence of ECH capacities for an ellipsoid $E(a,b)$ is the sequence (indexed starting at $0$) $N(a,b)$, where for $k\geq 0$, the term $N(a,b)_k$ is the $(k+1)^\text{st}$ smallest entry in the array 
$$
(am+bn)_{m,n\in\mathbb{N}_0},
$$
counted with repetitions \cite{m2}.  For example,  
$$N(1,1) = (0, 1, 1, 2, 2, 2, \ldots ).$$

\begin{proposition}\label{facts about sequences}
Fix positive integers $a$ and $b$.  There are at most $\frac{(a+1)(b+1)}{2}-1$ terms in the sequence $N(a,b)$ that are strictly less than $ab$. 
\end{proposition}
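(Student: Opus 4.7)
The plan is to reduce the claim to a lattice-point count in a triangle, and then bound that count by a simple symmetry argument. From the description of $N(a,b)$ via Table~\ref{table:N(a,b)}, each term of the sequence corresponds (with multiplicity) to a pair $(m,n)\in\mathbb{Z}_{\geq 0}^2$ via the value $am+bn$. Hence the number of terms of $N(a,b)$ not exceeding $ab$ equals the number of lattice points in the closed right triangle $T$ with vertices $(0,0)$, $(b,0)$, and $(0,a)$, and it suffices to bound $\#(T\cap\mathbb{Z}^2)$.

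Next I would embed $T$ in the closed rectangle $R=[0,b]\times[0,a]$, which contains exactly $(a+1)(b+1)$ lattice points. The hypotenuse $H$ of $T$ is the diagonal of $R$, and it splits $R$ into two triangles $T$ and $T'$. The $180^\circ$ rotation about the center of $R$ interchanges these two triangles while preserving the integer lattice, so $\#(T\cap\mathbb{Z}^2)=\#(T'\cap\mathbb{Z}^2)$. Inclusion-exclusion then yields
\[
\#(T\cap\mathbb{Z}^2)=\tfrac12\bigl((a+1)(b+1)+\#(H\cap\mathbb{Z}^2)\bigr).
\]

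The only remaining ingredient is the count of lattice points on the hypotenuse, i.e. the non-negative integer solutions $(m,n)$ of $am+bn=ab$; a standard B\'ezout-style parametrization (one particular solution plus $\mathbb{Z}$-multiples of $(b/d,-a/d)$, where $d=\gcd(a,b)$) shows there are exactly $\gcd(a,b)+1$ of them. The asserted bound $\tfrac{(a+1)(b+1)}{2}-1$ then follows by isolating the contribution on $H$ and using $\gcd(a,b)\ge 1$. The main potential obstacle is not the algebra but careful bookkeeping around the boundary — namely, correctly accounting for the lattice points realizing $am+bn=ab$ — but this is a finite and explicit check rather than a genuine difficulty.
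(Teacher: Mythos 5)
Your decomposition — embedding the triangle in the rectangle $[0,b]\times[0,a]$, using the $180^\circ$ rotational symmetry of the diagonal, and counting hypotenuse lattice points by a B\'ezout parametrization — is exactly the idea behind the paper's proof, stated more explicitly. However, the concluding step ``then follows'' does not follow; in fact your own formula refutes the proposition as literally written. You have
\[
\#(T\cap\mathbb{Z}^2)=\tfrac12\bigl((a+1)(b+1)+\gcd(a,b)+1\bigr),
\]
and since $\gcd(a,b)\ge 1$ this is at least $\tfrac{(a+1)(b+1)}{2}+1$, which \emph{strictly exceeds} the asserted upper bound $\tfrac{(a+1)(b+1)}{2}-1$. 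So if you count the closed triangle (terms $\le ab$), the inequality goes the wrong way; no amount of ``boundary bookkeeping'' will rescue it.

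The resolution is that the proposition's wording is off by a boundary: what is actually meant — and what is used both in the paper's own proof (``terms above the line'') and in the application in the proof of Proposition~\ref{prop:outer} (``strictly smaller than $g(n)g(n+J)$'') — is the number of terms \emph{strictly less} than $ab$, i.e.\ lattice points with $am+bn<ab$. For that count your identical computation gives
\[
\tfrac12\bigl((a+1)(b+1)-\gcd(a,b)-1\bigr)\le\tfrac{(a+1)(b+1)}{2}-1,
\]
with equality precisely when $\gcd(a,b)=1$, which matches the paper's remark that equality holds if and only if $ab$ appears exactly twice in the array. So your method is correct and essentially identical to the paper's, but you need to notice that the $\gcd(a,b)\ge 1$ estimate pushes the closed-triangle count in the wrong direction, flag the $\le$ versus $<$ discrepancy in the statement, and prove the strict version.
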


\begin{proof}
The number of terms in $N(a,b)$ with value strictly less than $ab$ is the same as the number $S$ of integer lattice points in the triangle in the first quadrant bounded by the axes and the line $ax + by < ab.$  We will estimate this using Pick's theorem.  More precisely, let $i$ denote the number of interior lattice points in the closure of this triangle, so that 
\begin{equation}
\label{eqn:defnS}
S = i + a + b - 1.
\end{equation}  
This is a lattice triangle, so we are justified in applying Pick'   s theorem, which gives that
\[ i = \frac{ab}{2} + 1 - \frac{q}{2},\]
where $q$ is the number of boundary lattice points.  Since $q \ge a + b + 1$, we therefore obtain in view of \eqref{eqn:defnS} that
\[ S \le \frac{ab}{2} + 1 + (a + b  - 1) - \frac{a+b+1}{2} = \frac{ (a+1)(b+1)}{2} - 1,\]
as desired.
\end{proof}

\noindent We now turn to some algebraic operations on ECH capacities.

\begin{definition}\label{def:seqsum}
Let $(S_k)_{k\geq 0}$ and $(T_k)_{k\geq0}$  be the sequences of ECH capacities of two convex toric domains $X$ and $Y$. We define the \textbf{sequence sum} and \textbf{sequence subtraction} as:
\begin{align*}
(S\#T)_k&= \max_{m+n=k}\left( S_m+T_n \right) \\
(S-T)_k&=\inf_{m\geq 0} \left(S_{k+m}-T_m\right).
\end{align*}
\end{definition}

\begin{remark}
In the definition of sequence subtraction above we require that $T\leq S$, i.e. that term-by-term this inequality holds.  If additionally 
$$\lim_{k\to\infty}S_k-T_k=\infty,$$
 then $\inf$ can be replaced by $\min$. This will happen in all instances in this paper, because $\text{volume}(X)>\text{volume}(Y)$. See \cite[Remark A.2]{dan} and \cite[Theorem 1.1]{asymptotics} for more details.
\end{remark}

\begin{remark}
The operations $\#$ and $-$ are not inverses.  Rather, as first observed by Hutchings, all one can conclude in general is that $(S-T) \# T \le S \le (S \# T) - T.$ 
\end{remark}

By \cite[Theorem A.1]{dan}, the sequence of ECH capacities of the convex toric domain $X$ with
negative weight expansion $(b;b_1,\ldots,b_n)$ is
obtained by the sequence subtraction
\begin{equation}\label{eq:ECH of toric convex domain}
\begin{array}{rcl}
c_\text{ECH}(X) & =&  c_\text{ECH} \left(B(b)\right) - c_\text{ECH} \left(\bigsqcup_{i=1}^nB(b_i)\right) \\
& =& c_\text{ECH} \left(B(b)\right) - \#_{i}c_\text{ECH}(B(b_i)).
\end{array}
\end{equation}
Since $E(1,a)$ is a concave toric domain in the sense of \cite[\S1.1]{dan} and $X$ is a convex
toric domain, the main result \cite[Theorem 1.2]{dan} implies the following. 

\begin{proposition}\label{prop:embedding}
A symplectic embedding $E(1,a) \stackrel{s}{\hookrightarrow} \lambda X$ exists if and only if
$$c_{ECH}(E(1,a)) \le c_{ECH}( \lambda X).$$
\end{proposition}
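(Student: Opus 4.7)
The plan is to derive Proposition~\ref{prop:embedding} as a direct consequence of the sharpness result for ECH obstructions in embedding problems between concave and convex toric domains, namely \cite[Theorem~1.2]{dan}, which is already cited in the paragraph preceding the statement. The work reduces to checking that the hypotheses of that theorem are satisfied in our setting and then quoting it.

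For the forward direction, I would simply invoke the monotonicity of ECH capacities under symplectic embeddings: if $E(1,a) \embeds \lambda X$, then $c_k(E(1,a)) \le c_k(\lambda X)$ for every $k\ge 0$, giving the desired inequality of sequences. This is a general property of ECH capacities and requires no toric hypothesis.

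For the backward direction, I would first verify that the two toric domains appearing in the statement fit the framework of \cite{dan}. The ellipsoid $E(1,a)$ is the preimage under $\mu$ of the triangle with vertices $(0,0)$, $(1,0)$, $(0,a)$, which is a concave toric domain in the sense of \cite[\S1.1]{dan} (the complement of $\Omega$ in a bounding rectangle is convex). On the other hand, $X=X_\Omega$ is convex toric by hypothesis, and $\lambda X = X_{\lambda\Omega}$ is again a convex toric domain since convexity is preserved under scaling of $\Omega$. With these identifications, \cite[Theorem~1.2]{dan} applies and yields that $c_{\mathrm{ECH}}(E(1,a)) \le c_{\mathrm{ECH}}(\lambda X)$ implies the existence of a symplectic embedding $E(1,a) \embeds \lambda X$.

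There is no real obstacle here beyond carefully matching the setup of \cite{dan}: the mild point worth flagging is that \cite[Theorem~1.2]{dan} is stated for convex targets, and one must confirm that $\lambda X$ remains a legitimate convex toric domain (in particular that $\lambda\Omega$ still contains the origin in its interior and is convex and connected), which is immediate from $\lambda>0$. Combining the two directions completes the proof.
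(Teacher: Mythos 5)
Your proposal is correct and takes essentially the same approach as the paper: the paper also derives this statement as a direct application of \cite[Theorem~1.2]{dan}, after noting that $E(1,a)$ is a concave toric domain and $X$ (hence $\lambda X$) is convex toric in the sense of \cite{dan}, with the forward direction following from monotonicity of ECH capacities.
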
  

\begin{remark}\label{rmk:dependsonly}
The existence of a symplectic embedding is equivalent to an inequality 
of ECH capacities, which are determined by the negative weight expansion.
Thus, the function $c_X(a)$ depends only on the negative weight expansion $(b;b_1,\ldots,b_n)$, 
not on any particular shape of a region in $\mathbb{R}_{\geq0}^2$ with that negative weight expansion.
\end{remark}

Combining this with the definition \eqref{definition of c} of the ellipsoid embedding function $c_X(a)$, we have
\begin{equation}\label{def:csup}
c_X(a)=\sup_k\frac{c_k(E(1,a))}{c_k(X)}. 
\end{equation}

An equivalent way to compute ECH capacities for convex toric domains uses the combinatorics of convex lattice paths.
The definitions below are based on \cite[Definitions A.6, A.7, A.8]{dan} and can be found there in more detail.

\begin{definition}
A \textbf{convex lattice path} is a piecewise linear path  $\Lambda:[0,c]\to\mathbb{R}^2$ such that all its vertices, including the first $(0,y(\Lambda))$ and last $(x(\Lambda),0)$, are lattice points and the region enclosed by $\Lambda$ and the axes is convex; we assume that $y(\Lambda)$ and $x(\Lambda)$ are both nonnegative. An \textbf{edge} of $\Lambda$ is a vector $\nu$ from one vertex of $\Lambda$ to the next.
The \textbf{lattice point counting function} $\mathcal{L}(\Lambda)$ counts the number of lattice points in the region bounded by a convex lattice path $\Lambda$ and the axes, including those on the boundary.

Let $\Omega\subset\mathbb{R}_{\geq0}^2$ be a convex region in the first quadrant. 
We define the $\Omega$\textbf{-length} of a convex lattice path $\Lambda$ to be
\begin{equation}\label{eq:Omega length}
\ell_\Omega(\Lambda)=\sum_{\nu \in\text{Edges}(\Lambda)}\det \left[ \nu\,\, p_{\Omega,\nu}\right],
\end{equation}
where for each edge $\nu$ we pick an auxiliary point $p_{\Omega,\nu}$ on the boundary of $\Omega$ such that $\Omega$ lies entirely to the right of the line through $p_{\Omega,\nu}$ with direction $\nu$.  To give some geometric intuition for the definition of $p_{\Omega,\nu}$, we note that if $p_{\Omega,\nu}$ happens to be a smooth point on the boundary of $\Omega$, then $\nu$ is parallel to the tangent line to $\Omega$ at $p_{\Omega,\nu}$.
\end{definition}

\noindent This type of calculation is illustrated
in the following figure.
\begin{center}
  \begin{figure}[ht]
\begin{overpic}[%grid,
scale=0.4,unit=1mm]{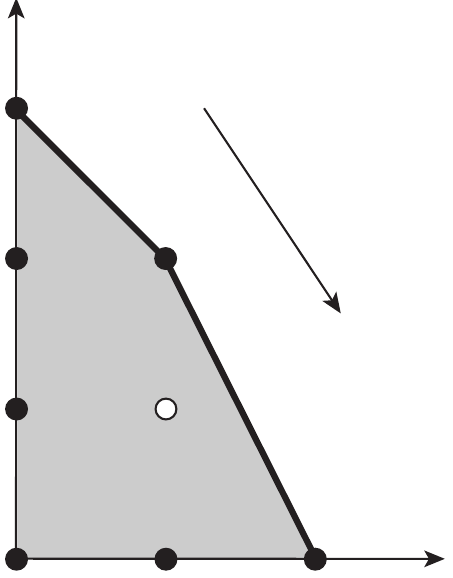}
   \put(10,25){$p_{\Omega,\nu}$}
   \put(5,11){\Large $\Omega$}
   \put(18,28){$\nu$}
\end{overpic}
\caption{This figure illustrates how to compute $\ell_\Omega(\Lambda)$.  The direction $\nu$ 
contributes one of the terms in the summation \eqref{eq:Omega length}.  In this case,
$\nu = (2,-3)$ and $p_{\Omega,\nu}= (1,2)$, and so the contribution is 
$\det \left(\begin{smallmatrix}
2&1\\-3&2
\end{smallmatrix}\right) = 4+3 = 7$.}
\label{fig:omega-length}
\end{figure}
\end{center}

\begin{remark}
Geometrically,
one can check that $\ell_{\Omega}$ is precisely the length measured with respect to the dual norm, for any (not necessarily symmetric) norm such that $\Omega$ is its unit ball.  For more related to this point of view, see \cite[\S 1.4]{qech}    
\end{remark}

Convex lattice paths provide a combinatorial way of computing ECH capacities of a convex toric domain, which we will use to prove Proposition \ref{prop:outer}.

\begin{theorem}\cite[Corollary A.5]{dan}\label{thm:lattice path}
Let $X$ be the toric domain corresponding to the region $\Omega$. Then its $k^\text{th}$ ECH capacity $c_k(X)$
is given by:
$$\min\left\{ \ell_\Omega(\Lambda): \Lambda  \text{ is a convex lattice path with } \mathcal{L}(\Lambda)=k+1\right\}.$$
\end{theorem}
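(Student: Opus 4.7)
The plan is to derive the lattice-path formula from the sequence-subtraction description of ECH capacities in \eqref{eq:ECH of toric convex domain}; since the statement is cited from \cite[Corollary A.5]{dan}, what follows is a plan for that argument.

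First I would verify the formula when $X = B(b)$ is a ball. Here $\Omega$ is the $b$-simplex with vertices $(0,0)$, $(b,0)$, $(0,b)$, and a direct computation using the support-line condition in \eqref{eq:Omega length} shows that each edge $\nu = (a,-c)$ of a convex lattice path contributes $b \cdot \max(a, c)$ to $\ell_\Omega(\Lambda)$ (the auxiliary point is $(b,0)$ if $a \le c$ and $(0,b)$ if $a \ge c$). Summing and minimizing over convex lattice paths with $\mathcal{L}(\Lambda) = k+1$ forces $\Lambda$ to hug the diagonal and yields $c_k(B(b)) = b \cdot d$, where $d$ is the least integer satisfying $\binom{d+2}{2} \ge k+1$, matching $N(b,b)_k$.

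Second, for a convex toric domain $X_\Omega$ with negative weight expansion $(b; b_1, \ldots, b_n)$, I would invoke \eqref{eq:ECH of toric convex domain} to write $c_k(X_\Omega)$ as $\inf\bigl(c_{m_0}(B(b)) - \sum_i c_{m_i}(B(b_i))\bigr)$ over decompositions $m_0 = k + \sum_i m_i$. By step one, each summand on the right is itself a lattice-path minimum. I would then set up a bijection between convex lattice paths $\Lambda$ for $\Omega$ and tuples $(\Lambda_0; \Lambda_1, \ldots, \Lambda_n)$ consisting of an ambient path in the $b$-simplex together with local paths near each removed $b_i$-corner, such that
\[ \ell_\Omega(\Lambda) = \ell_{B(b)}(\Lambda_0) - \sum_i \ell_{B(b_i)}(\Lambda_i) \]
and the lattice counts match. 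The geometric content is that an edge of $\Lambda$ whose outward normal selects a removed corner of $\Omega$ sees the new vertex of $\partial \Omega$ rather than the original vertex of the $b$-simplex; this local replacement produces exactly the $-\ell_{B(b_i)}$ correction.

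The main obstacle is the combinatorial bookkeeping in step two: one must ensure that the infimum over sequence-subtraction decompositions is realized by genuine convex paths for $\Omega$ and that lattice points near removed corners are counted consistently on both sides. A cleaner but heavier alternative would argue directly inside embedded contact homology: Reeb orbits on $\partial X_\Omega$ are indexed by primitive integer vectors $\nu$ with symplectic action $\det[\nu \, p_{\Omega, \nu}]$, and the ECH index of an orbit set translates via Pick's theorem into the lattice-point count $\mathcal{L}(\Lambda)$, so that $c_k(X_\Omega)$ is identified directly with the minimum action over index-$2k$ generators, i.e.\ the desired $\min \ell_\Omega(\Lambda)$.
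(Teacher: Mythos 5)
The paper does not prove this statement; it simply cites \cite[Corollary A.5]{dan}, so there is no internal argument to compare against. Assessing your plan on its own terms: step one (the ball) is correct --- $\det[\nu\,p]=ap_2+cp_1$ is maximized over the $b$-simplex at $b\max(a,c)$ per edge, and a diagonal-hugging path with the appropriate number of enclosed lattice points recovers $N(b,b)_k$.

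Step two, however, has a genuine gap that is not just bookkeeping. Unwinding \eqref{eq:ECH of toric convex domain} together with the definitions of $\#$ and sequence subtraction gives
$c_k(X_\Omega)=\min_{\{m_i\ge 0\}}\bigl(c_{k+\sum m_i}(B(b))-\sum_i c_{m_i}(B(b_i))\bigr)$,
and substituting the ball formula produces, for each fixed $\{m_i\}$, a \emph{difference of independent minima}
$\min_{\Lambda_0}\ell(\Lambda_0)-\sum_i\min_{\Lambda_i}\ell(\Lambda_i)$,
not a single minimum over tuples $(\Lambda_0;\Lambda_1,\dots,\Lambda_n)$. Nothing forces the separately chosen minimizers $\Lambda_0,\Lambda_i$ to be geometrically compatible (i.e.\ to fit together as one convex path with dents), so the "bijection" you invoke is not available as stated: you would need to show both that the extremal $\Lambda_0$ and $\Lambda_i$ can be chosen to align, and that an arbitrary $\Lambda$ for $\Omega$ decomposes in such a way that the resulting pieces $\Lambda_i$ are \emph{themselves} the minimizers for their parameters. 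There is also a lattice-count mismatch: $\mathcal{L}(\Lambda_0)-\sum_i\mathcal{L}(\Lambda_i)$ overcounts by the shared boundary points near each corner, and this must be reconciled with the $+1$ index shifts; the arithmetic works out only after accounting for those shared points, which you wave at via Pick's theorem but do not carry out. Your approach (b) --- computing the ECH of $\partial X_\Omega$ directly, with generators indexed by lattice paths, index computed via lattice-point counts, and action given by $\ell_\Omega$ --- is Hutchings' actual argument and the route that works, but you give it only as a one-sentence sketch. As written, neither branch of the plan constitutes a proof.
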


 Hutchings indicates \cite[Ex.\ 4.16(a)]{h3} that the minimum can be taken over those lattice paths $\Lambda$ that
have edges parallel to edges of the region $\Omega$.  This simplifies the search for obstructing paths $\Lambda$ and 
explains why the lattice paths in
Figure~\ref{fig:paths} look similar to some of the domains in Figure~\ref{fig:twelve}.

\subsection{Obstructive classes}\label{sec:obstructive}

To find classes that obstruct the ellipsoid embedding question for $E(1,a)$, we must introduce
the \textbf{weight expansion} $(a_1,\ldots,a_n)$ of the rational number $a\geq1$. 
The definition is recursive, given geometrically by subdividing a $1\times a$ rectangle into squares
as follows. We begin by marking off as many $1\times 1$ squares as possible, denoting the number of 
such as $\ell_0$.  Note that $\ell_0 = \lfloor a\rfloor$.  This leaves a $1\times (a-\ell_0)$ rectangle,
where $x_1=a-\ell_0<1$. We now proceed by marking off as many
$x_1 \times x_1$ squares as fit in this remaining rectangle. 
The number of such squares is denoted $\ell_1$.  We continue recursively, at each stage marking of $\ell_j$
squares of dimensions $x_j\times x_j$.  Because $a$ is rational, this procedure will end with $\ell_N$ squares of
size $x_N\times x_N$ for some $N$.  This will 
result in a tiling of
the $1\times a$ rectangle by squares with rational side lengths.  The weight expansion for $a$ is the list
of these side lengths $x_i$, with multiplicities $\ell_i$.  That is, 
$$
(a_1,\dots,a_n) = (\underbrace{1,\dots,1}_{\ell_0},\underbrace{x_1,\dots,x_1}_{\ell_1},\dots,\underbrace{x_N,\dots,x_N}_{\ell_N}).
$$
This procedure is illustrated
in the following figure, which shows that the weight expansion of $\frac83$ is $(1,1,\frac23,\frac13,\frac13)$.
\begin{center}
  \begin{figure}[ht]
\begin{overpic}[%grid,
scale=2,unit=1mm]{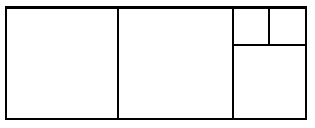}
   \put(-2,21){$1$}
   \put(21,5){$1$}
   \put(60,5){$1$}
   \put(90,6){$\frac23$}
   \put(105,15){$\frac23$}
   \put(105,33){$\frac13$}
   \put(84,36){$\frac13$}
   \put(97,36){$\frac13$}
\end{overpic}
\vskip 0.1in

\caption{This figure illustrates how to compute the weight expansion for $a=\frac83$.  We must 
divide a $1\times \frac83$ rectangle into squares.  The process begins with $\ell_0=2$  squares of size
$1\times 1$, then
$\ell_1=1$  square of size $\frac23 \times \frac23$, and finally $\ell_2=2$  squares of size $\frac13\times\frac13$.
}
\label{fig:weight expansion}
\end{figure}
\end{center}

\noindent 
The equivalent numerical definition of the weight expansion can be found 
in \cite[Definition 1.2.5]{mcduffschlenk}.  
When $a$ is irrational, we may still define the weight expansion in the same recursive way.  The geometric
procedure in this case will not end, and correspondingly the weight expansion has infinite length.

We recall here the essential properties of weight expansions that we use later in Section \ref{sec:pinpoint}.

\begin{lemma}{\cite[Lemma 1.2.6]{mcduffschlenk}}\label{lemma:mcduff schlenk}
Let $(a_1,\ldots,a_n)$ be the weight expansion of $a=\frac{p}{q}\geq1$, where $a$ is expressed in lowest terms. Then:
\begin{enumerate}
\item\label{item:small} $\ds{a_n=\frac{1}{q}\phantom{\sum_i^n}}$;
\item $\ds{\sum_{i=1}^n a_i^2=a}$; and
\item\label{item:weight expansion} $\ds{\sum_{i=1}^n a_i=a+1-\frac{1}{q}}$.
\end{enumerate}
\end{lemma}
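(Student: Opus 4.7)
The plan is to establish all three identities simultaneously by induction on the denominator $q$, using the link between the weight expansion and the Euclidean algorithm applied to $(p,q)$. The first task is to prove the structural lemma that drives everything: if $a = p/q \ge 1$ is in lowest terms, $k_0 = \lfloor a \rfloor$, and $r_0 = p - k_0 q$, then the weight expansion of $a$ consists of exactly $k_0$ copies of $1$, followed by the weight expansion of the rational $q/r_0$ (which lies in lowest terms and has denominator $r_0 < q$) scaled uniformly by the factor $r_0/q$. This is essentially an unpacking of the recursive definition: after one peels off the maximal number of $1$'s from $a$, the remaining residual $r_0/q < 1$ plays the role of the new ``unit,'' and rescaling by $q/r_0$ reduces the problem to the weight expansion of the partner ratio $q/r_0$.

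With that in hand, the base case is $q=1$, where $a=p$ is an integer and the weight expansion is $(1,\dots,1)$ with $p$ entries. All three identities are immediate: $a_n = 1 = 1/q$, $\sum a_i^2 = p = a$, and $\sum a_i = p = a + 1 - 1/q$.

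For the inductive step, assume the three identities hold for all rationals with denominator smaller than $q$. Since $\gcd(p,q)=1$ forces $\gcd(q,r_0)=1$, the inductive hypothesis applies to $q/r_0$; writing its weight expansion as $(c_1,\ldots,c_m)$, we have $c_m = 1/r_0$, $\sum c_i^2 = q/r_0$, and $\sum c_i = q/r_0 + 1 - 1/r_0$. Plugging the structural lemma into each of the three identities gives a one-line check: for (1), $a_n = (r_0/q)c_m = 1/q$; for (2), $\sum a_i^2 = k_0 + (r_0/q)^2 \sum c_i^2 = k_0 + r_0/q = a$; and for (3), $\sum a_i = k_0 + (r_0/q)\sum c_i = k_0 + 1 + r_0/q - 1/q = a + 1 - 1/q$, using $k_0 + r_0/q = a$ in the last step.

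The main obstacle is not any of the three identities, which collapse to trivial substitutions once the recursion is set up, but rather the structural lemma itself. The careful bookkeeping needed to verify that the weight expansion of $a$ has exactly $k_0 = \lfloor a \rfloor$ leading $1$'s and that the tail is precisely $(r_0/q)$ times the weight expansion of $q/r_0$ is the true content; it amounts to the standard identification of the weight expansion with the Hirzebruch--Jung / continued fraction data of the pair $(p,q)$, and once established, the induction writes itself.
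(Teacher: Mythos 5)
The paper does not prove this lemma; it is cited directly from McDuff--Schlenk \cite[Lemma 1.2.6]{mcduffschlenk}, so there is no in-paper argument to compare against. Your proof is correct, and the strategy -- induction on the denominator $q$ driven by the structural recursion that the weight expansion of $a=p/q$ consists of $\lfloor a\rfloor$ leading $1$'s followed by $(r_0/q)$ times the weight expansion of $q/r_0$, where $r_0=p-\lfloor a\rfloor q$ -- is exactly the standard one, and is essentially how McDuff and Schlenk themselves argue. Your ``structural lemma'' is a restatement of the recursive definition of the weight sequence (equivalently, the square-tiling of the $1\times a$ rectangle: peel off $\lfloor a\rfloor$ unit squares, rotate, and rescale the residual $1\times(r_0/q)$ rectangle), and once it is in place the three identities do collapse to the one-line substitutions you give, with the strict decrease $q>r_0>0$ from the Euclidean algorithm guaranteeing termination of the induction. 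The only thing I would flag is that you explicitly defer the verification of the structural lemma as ``the true content''; it is fair to describe it as an unpacking of the definition, but a complete write-up should spell out that step (including the observation that $\gcd(q,r_0)=\gcd(q,p)=1$, so the inductive hypothesis applies to $q/r_0$ in lowest terms).
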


Let $(b;b_1,\ldots,b_N)$ be the negative weight expansion of the convex toric domain $X$. 
The weight expansion of $a$ is related to the problem of embedding the ellipsoid $E(1,a)$ into $\lambda X$ in the following way, following \cite[Theorem 2.1]{dan}:
\begin{equation}
\label{eqn:embeddingequation}
\begin{array}{rl}
E(1,a)\embeds \lambda X  \iff & \bigsqcup_{j=1}^n B(a_j)\embeds\lambda X\\
\iff & \bigsqcup_{j=1}^n B(a_j)\sqcup \bigsqcup_{i=1}^N B(\lambda b_i)\embeds B(\lambda b).
\end{array}
\end{equation}

Equation \eqref{eqn:embeddingequation} highlights how the problem of embedding an ellipsoid $E(1,a)$ into a scaling of a convex toric domain $X$ is similar to the problem of embedding it into a scaling of a ball studied in \cite{mcduffschlenk}: both boil down to the problem of embedding a disjoint union of balls into another ball. Therefore it is no surprise that our proof uses similar tools to those in \cite{mcduffschlenk}, adapted to this more general case. In particular we use classes $(d;\mathbf{m})$, which for us are tuples of non-negative integers of the form 
\begin{equation}\label{eq:class}
(d;\mathbf{m})=(d;\widetilde{m}_1,\ldots,\widetilde{m}_N,m_1,\ldots,m_n)
\end{equation}
that satisfy the following Diophantine equations (cf.\ \cite[Proposition 1.2.12(i)]{mcduffschlenk}):
\begin{eqnarray}
\sum_{i=1}^N\widetilde m_i+\sum_{j=1}^n m_j=3d-1 \label{cond1} \\
\sum_{i=1}^N \widetilde m_i^2 + \sum_{j=1}^n m_j^2=d^2+1.\label{cond2}
\end{eqnarray}

Fix a convex toric domain $X$ and its negative weight expansion $(b;b_1,\ldots,b_N)$. Each class $(d;\textbf m)$ 
determines a function $\mu_{(d;\textbf m)}$ as follows. First, pad the tuple $(d;\mathbf{m})$ with zeros on the right in order to make it infinitely long. Then, for $a\in\mathbb{Q}$ with weight expansion $(a_1,\ldots,a_n)$ we define:
\begin{equation}\label{eq:defmu}
\mu_{(d;\textbf m)}(a):=\frac{\sum_{j=1}^n m_j a_j }{d\, b-\sum_{i=1}^N \widetilde m_i b_i}.
\end{equation}
Formula \eqref{eq:defmu} also makes sense for irrational values of $a$; as above, these have weight expansions of infinite length.

The following is analogous to  \cite[Corollary 1.2.3]{mcduffschlenk}:

\begin{proposition}\label{either}
Let $(a_1,\ldots,a_n)$ be the weight expansion of $a\in\mathbb{Q}$ and $(b;b_1,\ldots,b_N)$ be the negative weight expansion of $X$. 

If the ellipsoid $E(1,a)$ embeds symplectically into $X$, then either  
$$c_X(a)=\sqrt{\frac{a}{\vol}}$$
or there exists a class $(d;\mathbf{m})$ satisfying conditions \eqref{cond1} and \eqref{cond2} such that
\begin{equation} \label{cond3}
\mu_{(d;\mathbf m)}(a)>\sqrt{\frac{a}{\vol}}.
\end{equation}
In the latter case, $\displaystyle{c_X(a)=\sup_{(d;\mathbf{m})} \left\{\mu_{(d;\mathbf m)}(a)\right\}}$.
\end{proposition}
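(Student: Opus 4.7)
The plan is to adapt the argument of \cite[Corollary 1.2.3]{mcduffschlenk} from ball targets to convex toric domain targets. The reduction to a pure ball-packing problem is already prepared by \eqref{eqn:embeddingequation}: an embedding $E(1,a)\embeds\lambda X$ is equivalent to
$$\bigsqcup_{j=1}^n B(a_j)\sqcup \bigsqcup_{i=1}^N B(\lambda b_i)\embeds B(\lambda b).$$
The McDuff--Polterovich / Biran correspondence then identifies such packings with symplectic forms on the $(n+N)$-fold blow-up $\widetilde M$ of $\mathbb{C}P^2$ representing the cohomology class
$$\alpha_\lambda \;=\; \lambda b\, L \;-\; \sum_{i=1}^N \lambda b_i\,\tilde E_i \;-\; \sum_{j=1}^n a_j\, E_j,$$
where $L$ is the line class and the $\tilde E_i, E_j$ are the exceptional classes.

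Next I would invoke the Li--Liu / McDuff characterization of the symplectic cone of $\widetilde M$: the class $\alpha_\lambda$ is represented by a symplectic form if and only if $\alpha_\lambda\cdot\alpha_\lambda>0$ and $\alpha_\lambda(E)>0$ for every class $E$ Poincar\'e dual to a $(-1)$-exceptional sphere, i.e.\ every class with $E\cdot E=-1$ and $K\cdot E=-1$, where $K=-3L+\sum\tilde E_i+\sum E_j$ is the canonical class. Expanding such an $E$ in the basis as $E=(d;\tilde m_1,\ldots,\tilde m_N,m_1,\ldots,m_n)$, the two adjunction conditions become exactly the Diophantine equations \eqref{cond1} and \eqref{cond2}. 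Thus the relevant exceptional classes on $\widetilde M$ are precisely the tuples $(d;\textbf m)$ appearing in the statement of the proposition.

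Finally I would translate the two positivity conditions into inequalities on $\lambda$. Expanding $\alpha_\lambda\cdot\alpha_\lambda$ and using Lemma \ref{lemma:mcduff schlenk}(2) (so that $\sum a_j^2=a$) together with the identity $b^2-\sum b_i^2=\vol$ from Definition \ref{def:finiterational} gives $\alpha_\lambda\cdot\alpha_\lambda>0 \iff \lambda>\sqrt{a/\vol}$; and pairing $\alpha_\lambda$ with $E=(d;\textbf m)$ rearranges via \eqref{eq:defmu} to $\alpha_\lambda(E)>0 \iff \lambda>\mu_{(d;\textbf m)}(a)$. Taking the infimum over admissible $\lambda$ therefore yields
$$c_X(a) \;=\; \max\Bigl\{\sqrt{a/\vol},\; \sup_{(d;\textbf m)} \mu_{(d;\textbf m)}(a)\Bigr\},$$
which is exactly the dichotomy in the proposition: if the sup on the right does not exceed $\sqrt{a/\vol}$, then $c_X(a)=\sqrt{a/\vol}$; otherwise it equals $\sup \mu_{(d;\textbf m)}(a)$. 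This supremum is attained as a maximum because, for rational $a$, the weight expansion has finite length, and \eqref{cond1}-\eqref{cond2} then force only finitely many classes to give an obstruction exceeding $\sqrt{a/\vol}$.

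The main obstacle I foresee is a subtlety already present in \cite{mcduffschlenk} at the boundary of the symplectic cone: strict positivity against all exceptional classes realizes $\alpha_\lambda$, but to conclude $c_X(a)$ is the maximum (and not merely an upper bound for what is achievable) one needs a McDuff-style inflation argument to push the boundary and handle the mix of $\lambda$-dependent and $\lambda$-independent coefficients in $\alpha_\lambda$. This is essentially the content of \cite[Theorem 2.1]{dan} which already underwrites the equivalence \eqref{eqn:embeddingequation}, so I would cite it at this step rather than reprove it.
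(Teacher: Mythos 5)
Your proposal is correct and follows essentially the same route as the paper. The paper simply cites \cite[Theorem~1.2.2, Proposition~1.2.12]{mcduffschlenk} for the constraint $\sum \tilde m_i \frac{b_i}{b} + \sum m_j \frac{a_j}{\lambda b} < d$ and rearranges; you have unfolded exactly the content behind that citation (McDuff--Polterovich/Biran correspondence, the Li--Liu/McDuff symplectic-cone criterion, and the identification of exceptional classes with the Diophantine system \eqref{cond1}--\eqref{cond2}), arriving at the same dichotomy.
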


\begin{remark}
The supremum in Proposition~\ref{either} is actually a maximum: $$c_X(a)=\max_{(d;\mathbf{m})} \left\{\mu_{(d;\mathbf m)}(a)\right\}.
$$
This follows immediately from \eqref{dbound} below.  
\end{remark}

\noindent A class $(d;\mathbf m)$ satisfying \eqref{cond3} (in addition to \eqref{cond1} and \eqref{cond2}) is called 
an \textbf{obstructive class} and the corresponding function $\mu_{(d;\mathbf m)}$ is called an \textbf{obstruction}.

\begin{proof}
Hutchings' survey article \cite{h} gives a nice overview of these ideas.
By \cite[Theorem~1.2.2, Proposition~1.2.12]{mcduffschlenk}, 
an embedding as in the rightmost side of \eqref{eqn:embeddingequation} exists if and only if the volume constraint is satisfied (i.e. there is no obstruction from the volume) and 
\begin{equation}
\label{eqn:geomine}
\sum_{i=1}^N  \widetilde{m}_i  \lambda b_i + \sum_{j=1}^n m_j a_j < d \lambda b
\end{equation}
for all obstructive classes $(d; {\bf m})$.  
Though the following interpretation is not used in our proof, we remark for the reader seeking geometric intuition that the above inequality is motivated by consideration of symplectically embedded $(-1)$-spheres, which must in particular have positive symplectic area.  More precisely, the inequality \eqref{eqn:geomine} guarantees that the homology classes that could possibly support such spheres must have positive pairing with the symplectic form. 
It then turns out that this is the only constraint beyond the volume constraint for the existence of the desired embedding. For more details, we refer the reader to \cite{h}.

The inequality \eqref{eqn:geomine} is equivalent to the condition that
\begin{equation}\label{eq:geomine2} \lambda > \frac{ \sum_{j=1}^n m_j a_j}{db - \sum_{i=1}^N \widetilde{m}_i b_i}=\mu_{(d;\mathbf m)}(a).
\end{equation}
The last line of the proposition now follows because $c_X(a)$ is the infimum $\lambda$ value for which the embedding in the leftmost side of \eqref{eqn:embeddingequation} exists and thus equivalently \eqref{eq:geomine2} holds.
\end{proof}

The \textbf{length} $\ell(\mathbf{m})$ of the class 
$$(d;\mathbf{m})=(d;\widetilde{m}_1,\ldots,\widetilde{m}_N,m_1,\ldots,m_n)$$
is the number of nonzero $m_j$'s (not the $\widetilde m_i$'s).
For a rational number $a\in \mathbb{Q}$,  its \textbf{length} $\ell(a)$ is the number $n$ of 
entries in the weight expansion $(a_1,\ldots,a_n)$ of $a$.

\begin{lemma}\label{lemma:lengths}
Let $X$ be a convex toric domain and $(b;b_1\ldots,b_N)$ its negative weight expansion. Then,
\begin{enumerate}
\item If $\ell(a)< \ell(\mathbf m)$ then $\mu_{(d;\mathbf m)}(a)\leq \sqrt\frac{a}{\vol}$.
\item For all $a$ for which the right hand side is defined,
\begin{equation}
\label{dbound}
\mu_{(d;\mathbf m)}(a) \le  \sqrt{\frac{a}{vol}} \left( \frac{ \sqrt{b^2 - \sum b^2_i}}{\sqrt{ b^2 \frac{d^2}{d^2+1} - \sum b^2_i}}\right).
\end{equation}
\end{enumerate}
\end{lemma}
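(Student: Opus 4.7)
The plan is to estimate $\mu_{(d;\mathbf m)}(a) = \frac{\sum m_j a_j}{db - S}$, where $S := \sum \tilde m_i b_i$, $T := \sum \tilde m_i^2$, $B := \sum b_i^2$, by combining Cauchy-Schwarz on the numerator with a lower bound on the denominator obtained from an algebraic identity plus the Diophantine relation \eqref{cond2} in the form $\sum m_j^2 = d^2 + 1 - T$. Throughout I would use Lemma~\ref{lemma:mcduff schlenk}(2) ($\sum a_j^2 = a$), ordinary Cauchy-Schwarz ($S \leq \sqrt{TB}$), and the positivity $\vol = b^2 - B > 0$.

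For part (1), Cauchy-Schwarz gives $\sum m_j a_j \leq \sqrt{\bigl(\sum_{j \leq \ell(a)} m_j^2\bigr)\,a}$. The hypothesis $\ell(a) < \ell(\mathbf m)$ together with the fact that each nonzero $m_j$ is a positive integer forces $\sum_{j \leq \ell(a)} m_j^2 \leq \sum m_j^2 - 1 = d^2 - T$. For the denominator, I would verify the algebraic identity
\[ (db - S)^2 - (d^2 - T)\vol \;=\; (b\sqrt{T} - d\sqrt{B})^2 + (\sqrt{TB}-S)(2db - \sqrt{TB} - S). \]
Since $B < b^2$ and $d^2 \geq T$ (from \eqref{cond2} and $\sum m_j^2 \geq 1$, which holds whenever $\ell(\mathbf m)\geq 1$), one has $db > \sqrt{TB} \geq S$, so both summands on the right are nonnegative, giving $(db-S)^2 \geq (d^2 - T)\vol$. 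Dividing the numerator estimate by the square root of this denominator bound yields $\mu_{(d;\mathbf m)}(a) \leq \sqrt{a/\vol}$.

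For part (2), apply ordinary Cauchy-Schwarz without the length restriction to obtain $\sum m_j a_j \leq \sqrt{(d^2+1-T)\,a}$. Clearing denominators and using $\vol = b^2-B$, the claim reduces to
\[ (d^2+1)(db-S)^2 \;\geq\; (d^2+1-T)\bigl(d^2 b^2 - (d^2+1)B\bigr). \]
The hypothesis that the right-hand side of \eqref{dbound} is defined gives $d^2b^2 > (d^2+1)B$, which together with $T \leq d^2+1$ forces $d^2b^2 > TB$, so $(db-S)^2 \geq (db - \sqrt{TB})^2$. Expanding the latter square, the cross term $(d^2+1)TB$ cancels against $T(d^2+1)B$, and the claim reduces to
\[ T\,d^2 b^2 + (d^2+1)^2 B \;\geq\; 2(d^2+1)\,db\,\sqrt{TB}, \]
which is immediate from AM-GM applied to the two summands on the left.

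The step requiring the most care is spotting the algebraic identity in part (1): one must recognize that $(db - S)^2 - (d^2 - T)\vol$ splits as a square plus a product of two factors whose simultaneous nonnegativity is guaranteed precisely by $B<b^2$ and $d^2\geq T$. Once the identity is in hand, both parts of the lemma become applications of Cauchy-Schwarz and AM-GM, glued together by \eqref{cond2}. The only pitfall is the degenerate case $\sum m_j^2 = 0$ (i.e.\ $\mathbf m = 0$), in which $\mu_{(d;\mathbf m)}(a)=0$ and both bounds are trivially true.
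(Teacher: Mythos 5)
Your proof is correct, but it takes a genuinely different route from the paper's. Where the paper invokes the light-cone inequality (reverse Cauchy--Schwarz for the Lorentz form, citing \cite[Problem 4.5]{lightcone}) in both parts --- applied with the pair $(d,\tilde{m}_i)$ and $(b,b_i)$ for part (1), and then with the rescaled pair $(\sqrt{d^2+1},\tilde m_i)$ and $\bigl(b\tfrac{d}{\sqrt{d^2+1}},b_i\bigr)$ for part (2) --- you instead re-derive the needed denominator bound from scratch. Your algebraic identity
\[
(db-S)^2-(d^2-T)\vol
=\bigl(b\sqrt T-d\sqrt B\bigr)^2+\bigl(\sqrt{TB}-S\bigr)\bigl(2db-\sqrt{TB}-S\bigr)
\]
is precisely an elementary proof of that light-cone inequality (each summand on the right is nonnegative under $B<b^2$, $d^2\geq T$, $S\leq\sqrt{TB}$), and I checked it expands correctly. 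Your part (2), reduced to
\[
T\,d^2b^2+(d^2+1)^2B\geq 2(d^2+1)\,db\,\sqrt{TB}
\]
after the cancellation you note, is likewise correct by AM--GM; the reduction itself needs $S\leq\sqrt{TB}\leq db$, which you supply. What the paper's approach buys is brevity and modularity: one cited inequality handles both parts uniformly. What yours buys is self-containedness and transparency --- the reader sees exactly where the constraints $B<b^2$ and $d^2\geq T$ (respectively $d^2b^2>(d^2+1)B$) are spent. The Cauchy--Schwarz step on the numerator, including the ``drop at least one positive integer $m_j$'' trick in part (1), is identical to the paper's. One minor point: your nonnegativity argument in part (1) tacitly needs $db-S>0$ before dividing; this follows from the chain $S\leq\sqrt{TB}\leq db$ you already establish (with strictness since $\vol>0$ forces $b>\sqrt{B}$), but it is worth stating.
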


\begin{proof}
First assume that  $\ell(a)<\ell(\mathbf m)$, and therefore not all $m_j$'s appear in the sum $\sum m_j\, a_j$. 
Then we have
\begin{eqnarray*}
\mu_{(d;\mathbf m)}(a) & =& \frac{\sum m_j \,a_j }{d\, b-\sum \widetilde m_i \,b_i}\\
& \leq &\frac{\sqrt{\sum_{a_j\neq 0} m_j^2}\,\sqrt{\sum a_j^2}}{d\,b-\sum \widetilde m_i \,b_i} \quad \mbox{ (by Cauchy-Schwarz)}\\
& \leq & \frac{\sqrt{\sum m_j^2-1}\,\sqrt{\sum a_j^2}}{d\,b-\sum \widetilde m_i \,b_i}\quad \left(\mbox{ \begin{tabular}{l}
because at least one\\ $m_j\in\Z$ was excluded\end{tabular}}\right)\\
& =& \frac{\sqrt{d^2-\sum\widetilde{m}_i^2}\,\sqrt{a}}{d\,b-\sum \widetilde m_i \,b_i} \quad\quad \mbox{ (by \eqref{cond2})}
\end{eqnarray*}
The light-cone inequality (an analogue of the Cauchy-Schwarz inequality for the Lorentz product, \cite[Problem 4.5]{lightcone}) guarantees that
$$\sqrt{d^2-\sum\widetilde{m}_i^2}\,\sqrt{b^2-\sum b_i^2}\leq d\,b-\sum \widetilde{m_i}\,b_i,$$
so we obtain the desired inequality:
$$\mu_{(d;\mathbf m)}(a)\leq \frac{ \sqrt{a}}{\sqrt{b^2-\sum b_i^2}}=\sqrt\frac{a}{\vol}.$$

To prove \eqref{dbound} for general $a$ we repeat the argument above -- minus the line where we used the fact that at least one $m_j$ was excluded -- and conclude that
\[ \mu_{(d;\mathbf m)}(a)  \le \frac{\sqrt{1+d^2-\sum\widetilde{m}_i^2}\,\sqrt{a}}{d\,b-\sum \widetilde m_i \,b_i} .\]
The light-cone inequality from above guarantees that
\[ \sqrt{1+d^2-\sum\widetilde{m}_i^2}\,\sqrt{b^2\frac{d^2}{d^2+1}-\sum b_i^2}\leq d\,b-\sum \widetilde{m_i}\,b_i,\]
which gives us the desired bound
\[\mu_{(d;\mathbf m)}(a) \leq \frac{ \sqrt{a} }{\sqrt{b^2\frac{d^2}{d^2+1}-\sum b_i^2}}.\]
\end{proof}

\begin{proposition}\label{prop:there can be only one}
On each maximal interval $I$ where we have $\mu_{(d;\mathbf m)}(a)>\sqrt\frac{a}{\vol}$, the obstruction function $\mu_{(d;\mathbf m)}$ is continuous and piecewise linear.  Moreover, it has a unique non-differentiable point, which is at a value $\mathfrak{s}$ such that $\ell(\mathfrak{s})=\ell(\mathbf m)$.
\end{proposition}

\begin{center}
  \begin{figure}[ht]
\begin{overpic}[%grid,
scale=0.75,unit=1mm]{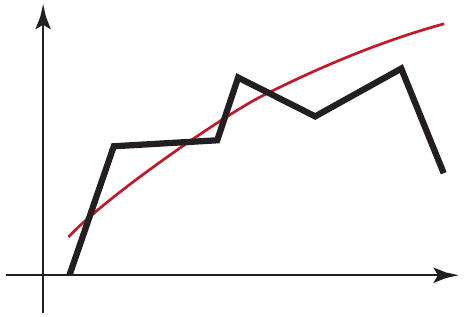}
   \put(57,35){\color{alizarin} $\displaystyle{\sqrt{\frac{a}{\vol}}}$}
   \put(57,18){$\mu_{(d;\mathbf m)}(a)$}
   \put(59,4){$a$}
   \put(25,31){\FiveStarOpenCircled}
   \put(8.5,21){\FiveStarOpenCircled}
\end{overpic}
\vskip 0.1in

\caption{The graph of an obstruction function $\mu_{(d;\mathbf m)}(a)$, together with the
volume curve {\color{alizarin} ${\sqrt{\frac{a}{\vol}}}$} in red.  
The marked \FiveStarOpenCircled s represent the unique singular points
guaranteed in Proposition~\ref{prop:there can be only one}.
}
\label{fig:obstruction}
\end{figure}
\end{center}

\begin{proof}
Let $I$ be a maximal interval where $\mu_{(d;\mathbf m)}(a)>\sqrt\frac{a}{\vol}$. Then by Lemma~\ref{lemma:lengths}, we have that $\ell(a)\geq \ell(\mathbf m)$ for all $a\in I$.
We note that discontinuities of the function
$$
\mu_{(d;\mathbf m)}(a) = \frac{ \sum_{j=1}^n m_j a_j}{db - \sum_{i=1}^N \widetilde{m}_i b_i}
$$
occur only when $\ell(a)$ drops strictly below $\ell(\mathbf m)$.  Thus,
$\mu_{(d;\mathbf m)}(a)$ is continuous on the interval $I$.
Now, assume towards a contradiction that $\ell(a) > \ell(\mathbf m)$ for all $a$ in $I$. Then in particular $1\notin I$ because $\ell(1)=1$. 

As in \cite[Lemma 2.1.3]{mcduffschlenk}, the $i^{th}$ weight in the weight expansion of $a$, considered as a function of $a$, is linear on any open interval that does not contain a point whose weight expansion has length less than or equal to $i$.  Thus, with $\ell(a) > \ell(\mathbf m)$ for all $a$ in $I$, the function $\mu_{(d;\mathbf m)}(a)$ would be linear on $I$.  But this is impossible: the volume curve is concave  and the interval $I$ is necessarily bounded above (and below by 1), as the graph of $c_X(a)$ is equal to the volume curve for sufficiently large $a$. Thus, there is some point $\mathfrak{s}$ with $\ell(\mathfrak{s}) = \ell(\mathbf m)$.  

The uniqueness follows from Lemma~\ref{lemma:lengths} together with the following basic fact about weight expansions, proved in  \cite[Proof of Lemma 2.1.3]{mcduffschlenk}: if $b > a$ are two rational numbers and $\ell(a)=\ell(b)$, then there must be some number $y \in (a,b)$ with $\ell(y) < \ell(a)=\ell(b)$.  

We conclude that $\mu_{(d;\mathbf m)}(a)$ is piecewise linear on $I$, with $\mathfrak{s}$ the unique singular point.  To see that  $\mu_{(d;\mathbf{m})}(a)$ is continuous at $\mathfrak{s}$, note first that it is equivalent to show that $\sum m_j a_j$ is continuous at $\mathfrak{s}$.  That $\sum m_j a_j$ is continuous at $\mathfrak{s}$ is proved in  
\cite[Lem. 2.3.3]{mcduffschlenk}.
\end{proof}

\begin{corollary} 
\label{it:one}
Any obstructive class is obstructive on finitely many intervals, on which it is linear.
\end{corollary}
\begin{proof}
For any obstructive class $(d; {\bf m})$, there are only finitely many values $a$ for which $\ell(a) = \ell(\mathbf m)$, and by Proposition~\ref{prop:there can be only one} any such interval must have such a point.
\end{proof}

\vskip 0.2in

\subsection{Toric manifolds and almost toric fibrations}\label{subsec:toric}
A {\bf  toric symplectic manifold} is a symplectic manifold $M$ equipped with
an effective\footnote{\phantom{.}An action is effective if no 
nontrivial element acts trivially.} Hamiltonian $T$ action satisfying
$
\dim(T)= \frac{1}{2}\dim(M).
$
Delzant established a one-to-one correspondence between compact 
toric symplectic manifolds (up to equivariant symplectomorphism) and Delzant polytopes (up to
$AGL_n(\Z)$ equivalence).

A convex polytope $\Delta$ in $\R^n$ may be defined as the convex hull of finitely many points, 
or alternatively as a (bounded) intersection of a finite number of half-spaces in $\R^n$.  
We say $\Delta$ is {\bf simple} if there are $n$ { edges} adjacent to each { vertex}, 
and it is {\bf rational} if the edges have rational slope relative to a choice of lattice
$\Z^n\subset \R^n$. For a vector with rational slope, the {\bf primitive vector} 
with that slope is the shortest positive multiple of the vector that is in the lattice 
$\Z^n\subseteq \R^n$. A  simple polytope is {\bf smooth} at a vertex if the $n$ 
primitive edge vectors emanating from the vertex span the lattice 
$\Z^n\subseteq\R^n$ over $\Z$. It is { smooth} if it is smooth at each vertex.  
A {\bf Delzant polytope} is a simple, rational, smooth, convex polytope.

To each compact toric symplectic  manifold, the polytope we associate to it is 
its moment polytope.  
There is a more complicated version of this classification theorem for toric symplectic
manifolds without boundary which are not necessarily compact.
In this case, {\em polytopes} are replaced by {\em orbit spaces}, which are possibly unbounded.  
Given such an orbit space, the manifold $M$ is
not unique but determined by a choice of cohomology class in 
$$
H^2(M/T; \Z^n\times \R).
$$
For further details, the reader should consult 
\cite[Chapter VII]{audin} and \cite[Theorem 1.3]{KL:noncpt toric}.

\begin{remark}\label{rem:toric unique}
Note that when $M/T$ is contractible, the cohomology group above is trivial and 
the corresponding $T$-space is unique.
For example, Euclidean space $\C^n$ equipped with the coordinate $T^n$ action
$$
(t_1,\dots,t_n)\cdot (z_1,\dots,z_n) = (t_1\cdot z_1,\dots,t_n\cdot z_n)
$$
is a toric symplectic manifold.  The moment map is 
$$
\mu:\mathbb{C}^n\to\mathbb{R}^n, \qquad (z_1,\dots,z_n)\mapsto(\pi |z_1|^2,\dots,\pi |z_n|^2),
$$ 
with image the positive orthant in $\R^n$.  Note that $\C^n/T^n$ is equal to this positive orthant, 
which is contractible and so by \cite[Theorem 1.3]{KL:noncpt toric}, this is the unique toric symplectic
manifold with this moment map image.

More generally, for any relatively open subset $\Omega\subset \R^n_{\geq 0}$, the toric domain 
$X_{\Omega}=\mu^{-1}(\Omega)$ inherits
a linear symplectic form and Hamiltonian torus action from $\C^n$.
Thus endowed,  $X_\Omega$ is a (non-compact) toric symplectic manifold with 
$X_\Omega/T = \Omega$.
When $\Omega$ is contractible, for example, the cohomology group $H^2(X_\Omega/T; \Z^n\times \R) = 0$ and 
in this case, $X_\Omega$ is the unique toric symplectic manifold with  moment map image $\Omega$.
\end{remark}

The moment map on a toric symplectic manifold $M$ is a completely integrable system
with elliptic singularities.  We now focus on four-dimensional manifolds.
An {\bf almost toric fibration} or {\bf ATF} is a completely integrable system on a four-manifold $M$ 
with elliptic and focus-focus singularities.
An {\bf almost toric manifold} is a symplectic manifold equipped with an almost toric fibration. 
These were introduced by Symington \cite{symington}, building on work of Zung \cite{zung}. 
Almost toric fibrations on compact four-manifolds without boundary
were classified by Leung and Symington in \cite{leung symington} in terms of the
 {\bf base diagram}, which includes
the image of the Hamiltonians 
with decorations to indicate the locations of  focus-focus singularities.
Evans gives a particularly nice exposition of these ideas \cite{evans}.

For a toric symplectic $M$, we can identify the singular points of the Hamiltonians in terms of the
moment map image.  In the four dimensional case, the preimage of each vertex in the moment polygon 
is a single point for which the moment map has an elliptic singularity of corank two.  
The preimage of a point on the interior of an edge is a circle, for each point of which 
the moment map has an elliptic singularity of corank one.  The preimage of a point on the interior of the polygon 
is a $2$-torus, of which each point is a regular point. 
Thus, in Figure~\ref{fig:mutation procedure}(a), there are three  corank two elliptic singularities,
three open intervals' worth of circles of corank one elliptic singularities, and a disc's worth of tori of regular points.

\begin{center}
  \begin{figure}[ht]
\begin{overpic}[%grid,
scale=0.3,unit=1mm]{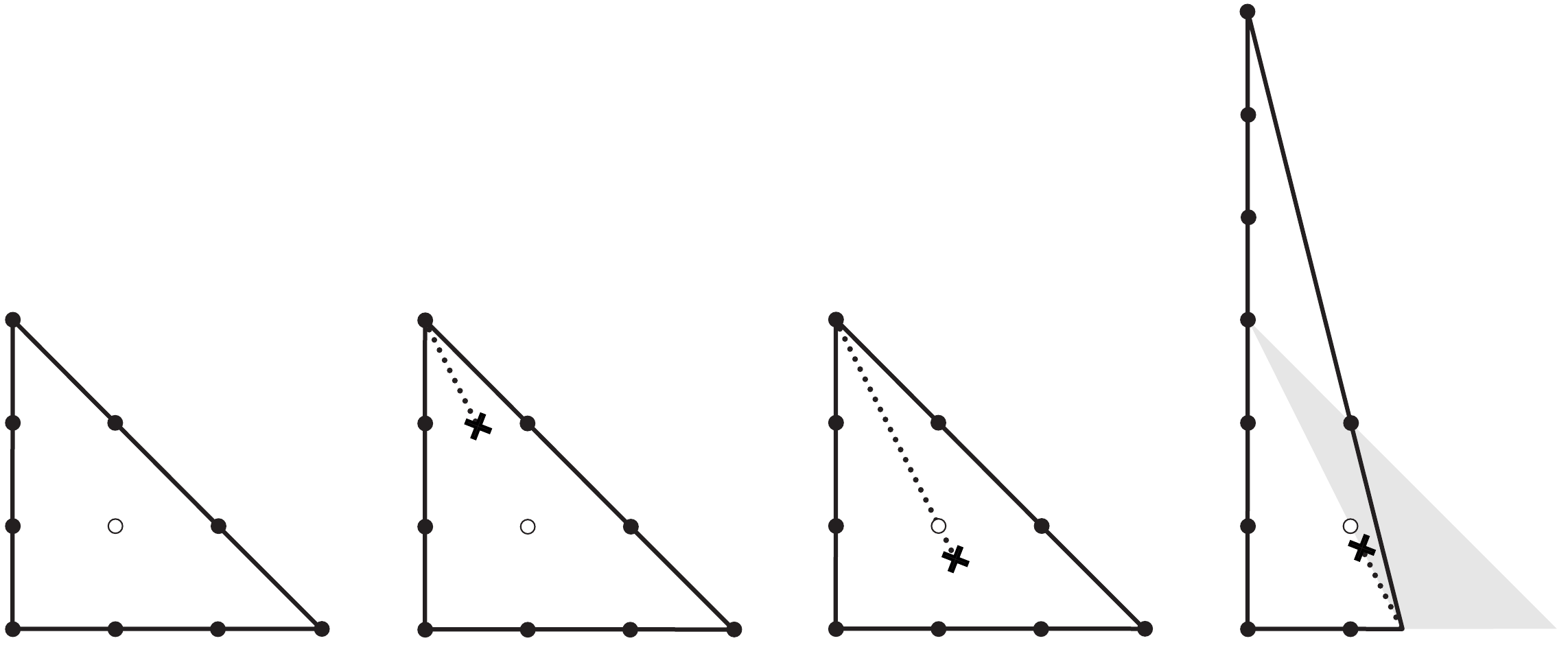}
   \put(9,-4){(a)}
   \put(40,-4){(b)}
   \put(70,-4){(c)}
   \put(98,-4){(d)}
   \put(60,25){\bf v}
   \put(105,0){\bf w}
\end{overpic}
\vskip 0.1in

\caption{Figure (a) is the Delzant polygon for the standard $T$-action on
$\C P^2_3$ (where the line has symplectic area 3).  From (a) to (b), we perform a nodal trade at the top vertex.  From (b) to (c),
we perform a nodal slide.  From (c) to (d), we perform a mutation on the base diagram at the anchor vertex
labeled {\bf v}; the mutation matrix is $\left(\begin{smallmatrix}
-1&-1\\ \phantom{-}4&\phantom{-}3
\end{smallmatrix}\right)$.
In (d), the base diagram is outlined in black and the new anchor vertex is labeled {\bf w}. The light gray part in (d) is there just to indicate the portion of the base diagram in (c) that changed as a result of the mutation.
Thus, each of these figures represents an almost toric
fibration on $\C P^2_3$.
Note that the last figure allows us to find an embedding from $E(\frac32,6)$ into $\C P^2_3$, which gives
$E(1,4)\embeds B(2)$.  This embedding 
is only as explicit as the diffeomorphisms described here pictorially (which is to say, not explicit!).
}
\label{fig:mutation procedure}
\end{figure}
\end{center}

There are three important operations on the base diagram of an almost toric manifold that fix the symplectomorphism 
type of the manifold (cf.\ \cite{evans, leung symington, symington, vianna}). 
The first is a {\bf nodal trade}.  Geometrically, this involves excising the neighborhood of a fixed
point and gluing in a local model of a focus-focus singularity.  This does not change the underlying manifold,
but it does change the Hamiltonian functions.  The effect on the base diagram is that we must 
insert a ray with a mark for the focus-focus singularity thereon.  In Figure~\ref{fig:mutation procedure}, such a ray has
appeared in (b).  The singularities of the Hamiltonian function are still recorded in the base diagram.
Above the marked point on the ray, there is a pinched torus.  The pinch point is a focus-focus singularity for the
new Hamiltonians; the other points on the pinched torus are regular.  Everything else is as before except
for the vertex that anchors the ray.  This has been transformed into 
a circle, for each point of which 
the new Hamiltonians have an elliptic singularity of corank one.

The second operation is a {\bf nodal slide}.  The local model for a focus-focus singularity has one degree of freedom.
A shift in that degree of freedom moves the focus-focus singularity further or closer to the preimage of the corner where
the ray is anchored.  In the base diagram, the marked point moves along the ray.
Such a slide is occurring in Figure~\ref{fig:mutation procedure} from (b) to (c).
The singularities remain as they were.

The third operation is a {\bf mutation} with respect to a nodal ray of the base diagram, at the corresponding anchor vertex.  
This changes the shape of the base diagram as follows.
The base diagram is sliced in two by the nodal ray.  One piece remains unchanged and the other
is acted on by an affine linear transformation in $ASL_2(\Z)$ that
\begin{itemize}
\item fixes the anchor vertex;
\item fixes the nodal ray; and
\item aligns the two edges emanating from the anchor vertex.
\end{itemize}
The operation creates a new (anchor) vertex and nodal ray (in the opposite direction from before) 
in the base diagram.  This result is shown in Figure~\ref{fig:mutation procedure} from (c) to (d).  As before, 
the preimage of the anchor vertex is
a circle, for each point of which 
the new Hamiltonians have an elliptic singularity of corank one.  The old anchor vertex is now
in the interior of an edge, and its preimage remains a circle of corank one elliptic singularities.

It is important to note that a mutation is only allowed when the nodal ray hits
\begin{itemize}
\item the interior of an edge; or
\item a vertex which is the anchor of a nodal ray in the opposite direction.
\end{itemize}
In the latter case, the marked points accumulate on the nodal ray.
See, for example, the sequence of mutations described in Figure~\ref{fig:3-1-1-1-pregame}
where many nodes have accumulated.

\begin{proposition}\label{prop:triangleintothing}
Suppose that a symplectic manifold 
$M$ is equipped with an almost toric fibration with base diagram $\Delta_M$ that consists of
a closed region in $\mathbb{R}_{\geq 0}^2$ that is bounded by the axes and a 
convex (piecewise-linear) curve from $(a, 0)$ to $(0, b)$, for $a,b\in\mathbb{R}^+$. Suppose in addition that there
is no nodal ray emanating from $(0,0)$.
Then there exists a symplectic embedding of the  ellipsoid
$(1-\varepsilon){E(a,b)}$ into $M$ for any $0<\varepsilon<1$.
\end{proposition}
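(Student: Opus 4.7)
The plan is to identify a toric open neighborhood of the corner $(0,0)\in\Delta_M$ that is large enough to contain the moment image of the ellipsoid, and then apply the uniqueness of toric structures from Remark~\ref{rem:toric unique} to transport the standard embedding $E(a,b)\subset \mathbb{C}^2$ into $M$. Writing $T_\varepsilon$ for the triangle with vertices $(0,0)$, $((1-\varepsilon)a,0)$, $(0,(1-\varepsilon)b)$, the ellipsoid $(1-\varepsilon)E(a,b)$ is precisely the preimage under the standard moment map $\mu\colon\mathbb{C}^2\to\mathbb{R}^2_{\geq 0}$ of the relatively open set $T_\varepsilon$ minus its hypotenuse.

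First, since $\Delta_M$ is a convex region bounded by the axes and a convex curve joining $(a,0)$ to $(0,b)$, the full triangle $T$ with vertices $(0,0),(a,0),(0,b)$ lies inside $\Delta_M$; consequently $T_\varepsilon\subset T$ lies at positive distance from the non-axis portion of $\partial\Delta_M$. Second, I would use nodal slides to move every focus-focus marked point out of $T_\varepsilon$: by hypothesis no nodal ray emanates from $(0,0)$, so every nodal ray is anchored at a vertex of $\Delta_M$ distinct from the origin, and each such anchor lies outside $T_\varepsilon$ for $\varepsilon$ small enough; sliding each marked point along its ray toward its anchor therefore evacuates it from $T_\varepsilon$. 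Third, I would choose an open contractible neighborhood $U\subset\Delta_M$ of $T_\varepsilon$ minus its hypotenuse that avoids all newly positioned marked points. On the preimage of $U$, the almost toric fibration is a genuine toric fibration with contractible orbit space, and because the vertex $(0,0)$ is a smooth corner with primitive edge vectors $(1,0)$ and $(0,1)$, Remark~\ref{rem:toric unique} provides a $T^2$-equivariant symplectomorphism between this preimage and the standard toric domain $X_U=\mu^{-1}(U)\subset \mathbb{C}^2$. The inclusion $(1-\varepsilon)E(a,b)\subset X_U$ then produces the required symplectic embedding into $M$.

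The main obstacle will be verifying the evacuation step rigorously: every focus-focus singularity must be slid out of $T_\varepsilon$, which requires that each nodal ray of the diagram actually extend into $\Delta_M\setminus T_\varepsilon$. This is precisely why the hypothesis that no nodal ray emanates from $(0,0)$ is crucial—otherwise, a ray anchored at the origin could be entirely contained in $T_\varepsilon$ and its marked point could not be removed. A subsidiary technicality is ensuring the neighborhood $U$ can be taken simultaneously contractible, contained in $\Delta_M$, and disjoint from all marked points; the positive distance established in the first step, together with the fact that there are only finitely many marked points, should make this straightforward.
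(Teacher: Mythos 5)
Your proposal is correct and takes essentially the same approach as the paper: slide the focus-focus marked points out of the region of interest (using the hypothesis that no nodal ray is anchored at the origin), observe that over a contractible subregion avoiding the marked points the fibration is genuinely toric, invoke the uniqueness of toric structures over a contractible orbit space (Remark~\ref{rem:toric unique}), and read off the ellipsoid embedding from the inclusion of moment images. The paper phrases the identification step slightly differently --- it excises small disks around the anchor vertices, observes the complement $\Omega$ is a contractible toric orbit space, and then appeals to the Local Normal Form theorem at the fixed point over $(0,0)$ --- but this is a cosmetic difference from your choice of a contractible open neighborhood $U$ of $T_\varepsilon$ followed by a direct application of uniqueness. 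One small wording correction: you write that the anchor vertices lie outside $T_\varepsilon$ ``for $\varepsilon$ small enough,'' but in fact this holds for \emph{all} $\varepsilon\in(0,1)$ (for any $\varepsilon>0$, the triangle $T_\varepsilon$ is strictly contained in the interior of $T\subset\Delta_M$ except along the axes, while the anchors lie on the non-axis boundary of $\Delta_M$ or at $(a,0)$, $(0,b)$), which is what the proposition requires.
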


\begin{proof}
The region $\Delta_M$ resembles Figure~\ref{fig:triangle fits}(a).  
We slide all nodes so that they are contained in small
neighborhoods of the vertices from which their rays emanate.
The neighborhoods should be sufficiently small so that
they are disjoint from
the triangle with vertices
$(0,0)$, $\big( (1-\varepsilon)\cdot a,0\big)$, and  $\big( 0,(1-\varepsilon)\cdot b\big)$.
The result now resembles
Figure~\ref{fig:triangle fits}(b).

\begin{center}
  \begin{figure}[ht]
\begin{overpic}[%grid,
scale=0.75,unit=1mm]{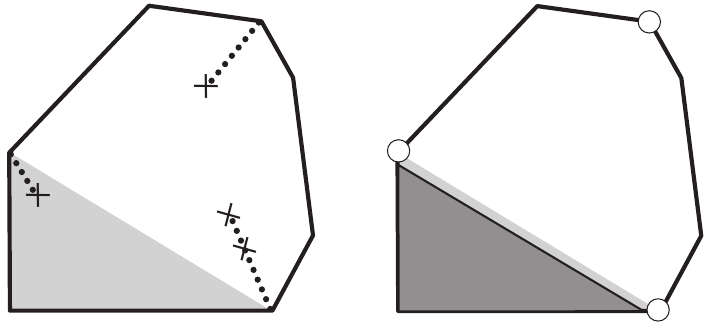}
   \put(13,-4){(a)}
   \put(68,-4){(b)}
   \put(36,0){$(a,0)$}
   \put(-8.5,23){$(0,b)$}
\end{overpic}
\vskip 0.1in

\caption{Figure (a) is a base diagram satisfying the hypotheses of  Proposition~\ref{prop:triangleintothing}.  
Figure (b) shows the new base diagram after nodal slides.  The nodal rays are 
contained in the small disks indicated at the corresponding vertices.
}
\label{fig:triangle fits}
\end{figure}
\end{center}

We now remove the small disks from the base diagram to produce a non-compact region $\Omega$.
Because there is no nodal ray emanating from $(0,0)$, the origin is still included in the region $\Omega$.
We also remove the corresponding neighborhoods from $M$ to produce a non-compact symplectic
manifold $M_\Omega\subset M$ with a pair of Poisson-commuting Hamiltonian functions that have only elliptic singularities.
Thus, $M_\Omega$ is actually a toric symplectic manifold.  Because $\Omega$ is contractible, 
following Remark~\ref{rem:toric unique}, $M_\Omega$ is the unique toric symplectic manifold
with this moment map image. The preimage of the origin is a fixed point.
Because $\Omega$ contains  the dark, closed triangle in
Figure~\ref{fig:triangle fits}(b)  with vertices
$$(0,0)\ ,\ \big( (1-\varepsilon)\cdot a,0\big)\ , \mbox{ and } \big( 0,(1-\varepsilon)\cdot b\big),
$$
the Local Normal Form theorem  \cite[Theorem B.3]{KL:noncpt toric} now guarantees that for the fixed point above $(0,0)$,
there is an equivariant neighborhood that is symplectomorphic to the closed
ellipsoid $(1-\varepsilon)\cdot \overline{E(a,b)}$.
This guarantees that for any $\varepsilon>0$, there is a symplectic embedding 
$(1-\varepsilon)\cdot {E(a,b)}\embeds M_\Omega\subset M$ (centered at the fixed point), as desired. 
\end{proof}

\section{Passing to closed symplectic manifolds}\label{sec:passing to closed}

We will see in this section how 
ellipsoid embeddings into compact target spaces, including $\mathbb{C}P^2$ blown up $0$ to $4$ times and 
$\mathbb{C}P^1\times\mathbb{C}P^1$, are equivalent to ellipsoid embeddings into appropriate convex toric
domains.
We begin with the proof of our first main theorem, which
relates the ellipsoid embedding function of a convex toric domain
$X_\Omega$ (a subset of $\C^2$ that is a manifold with boundary) and 
the ellipsoid embedding function of a compact toric symplectic
manifold $M$, when these two manifolds have the isomorphic
image $\Omega$ under their respective moment maps.
This will include the examples shown in Figure~\ref{fig:delzant},
which will produce the result for $0\leq n\leq 3$ blowups needed for Theorem~\ref{thm:main}.
Then we will prove Lemma~\ref{rem:polygon}, which will produce
the desired result for $4$ blowups of $\C P^2$

\begin{proof}[Proof of Theorem \ref{prop:mainproposition}]
($\Longleftarrow$)
First suppose we have an embedding $E(d,e) \embeds X_\Omega$. 
The ellipsoid $E(d,e)$ is an open ellipsoid, so the image of the symplectic embedding is contained in $\op{int}(X_\Omega)$.
Because the Delzant polygon for $M$ coincides with $\Omega$, we have an inclusion $\op{int}(X_\Omega)\embeds M$.
Indeed, this is a symplectic embedding, 
so we may simply compose $E(d,e) \embeds \op{int}(X_\Omega)\embeds M$ to get
 an embedding \eqref{eqn:desiredembedding}.
 
 \vskip 0.1in

\noindent ($\Longrightarrow$)
For the other direction, suppose that $M$ is a toric symplectic manifold whose moment map image is a Delzant polygon
(five examples are shown in Figure~\ref{fig:delzant}).
Assume there is an embedding $E(d,e) \embeds M$. 
To show that there is an embedding $E(d,e)\embeds X_\Omega$, 
\cite[Corollary~1.6]{dan} establishes that it is sufficient 
to produce embeddings of closed ellipsoids  $(1-\varepsilon) \overline{E(d,e)} \embeds X_{\Omega}$ for any  $0< \varepsilon <1$.  
Given such an $\varepsilon$, we first choose $d', e'$ so that $e'/d'$ is rational and
$$(1-\varepsilon) \overline{E(d,e)} \subset E(d',e')\subset E(d,e).
$$
In particular, because $E(d,e)\embeds M$, there is also a symplectic 
embedding of the closed ellipsoid $\overline{E(d',e')} \to M$. 

It is a general fact that a closed toric symplectic four-manifold $M$ is either a product of two symplectic two-spheres, 
or can be obtained from $\mathbb{C}P^2$ by a series of equivariant blowups (see \cite[Corollary 2.21]{kkp} for a careful exposition).  
For example, 
in Figure~\ref{fig:delzant}, the square in (c) corresponds to $S^2\times S^2$ with symplectic form that has
area $2$ on each $S^2$.  The triangle in Figure~\ref{fig:delzant}(a) corresponds to $\C P^2$ and
the polygons in Figure~\ref{fig:delzant} (b), (d), and (e) 
correspond to equivariant blowups of $\C P^2$.
We consider general blowups of $\C P^2$ and the product $S^2\times S^2$ in turn.

\vspace{3 mm}

\noindent {\bf Case 1: Blowups of $\mathbb{C}P^2$.}

\vspace{3 mm}

 Assume first that $M$ is obtained from $\mathbb{C}P^2$ by a series of 
equivariant symplectic blowups; as in the proof of \cite[Corollary 2.21]{kkp}, these equivariant blowups 
correspond to corner chops on the polygon, resulting finally in $\Omega$.  As has been our convention, we 
may assume that we choose the negative weight expansion for $\Omega$ with $b$ as small as possible and the $b_i$ 
as large as possible at each step, resulting in the negative weight expansion $(b;b_1,\dots,b_n)$. 
  
We have $\overline{E(d',e')}\subset M$.  Because $e'/d'$ is rational, this ellipsoid has a finite weight expansion
$(a_1,\dots, a_m)$.  We may use this weight expansion to blow up along that
closed ellipsoid (as in \cite[\S 2.1]{dan} or \cite{mcd}).  Together with the negative weight expansion
for $\Omega$, this sequence of blowups yields a symplectic form on
\begin{equation}
\label{eqn:blownupmanifold}
\mathbb{C}P^2 \# n\overline{\mathbb{C}P}^2  \# m\overline{\mathbb{C}P}^2.
\end{equation}
Specifically, we think of the first $n$ $\overline{\mathbb{C}P}^2$ factors as corresponding to the $n$ blowups 
required to produce $M$, and we think of the remaining $m$ factors as those required to blowup $\overline{E(d',e')}$;
The symplectic form on \eqref{eqn:blownupmanifold}
satisfies
$$
PD[\omega] = bL - \sum_{i=1}^n b_i E_i - \sum_{j=1}^m a_j E_j
$$
and
$$
PD(-c_1(TM)) = -3L + \sum_{i=1}^n E_i + \sum_{j=1}^m E_j.
$$
These two equations are analogues of equations (6) and (7) in \cite{h} 
(where we have normalized the line to have symplectic
area $b$).

By \cite[Proposition 6]{h}, having such a blowup symplectic form is equivalent to a symplectic embedding
\[\bigsqcup^{m}_{i=1} \overline{B( a_i) } \sqcup \bigsqcup_{i=1}^{n} \overline{ B(b_i) } \embeds B(b).\]
This immediately implies that the open balls embed
\[\bigsqcup^{m}_{i=1} {B( a_i) } \sqcup \bigsqcup_{i=1}^{n} { B(b_i) } \embeds B(b),\]
which allows us to use \cite[Theorem 2.1]{dan} to deduce that there is a symplectic embedding
\[ E(d',e') \embeds X_{\Omega},\]
and hence  the desired embedding $(1-\varepsilon) \overline{E(d,e)} \embeds X_{\Omega}$ exists.

\vspace{3 mm}

\noindent {\bf Case 2:  $M=S^2\times S^2 = \mathbb{C}P^1 \times \mathbb{C}P^1$.}

\vspace{3 mm}

If $M$ is a product of two symplectic two-spheres, we use the trick that after performing a single (arbitrarily 
small) blowup, we are back in Case 1.
Using the same notation as before,
we first find a small embedded $\overline{B(\delta)}$ disjoint from the image of $\overline{E(d',e')}$. 
Blow up along this ball, let $F$ denote the homology class of the exceptional fiber, and let $S_1$ and $S_2$ denote 
the homology classes of the spheres.  There is a diffeomorphism from the resulting manifold $\widehat{M}$ 
to $\mathbb{C}P^2 \# 2 \overline{\mathbb{C}P}^2$ mapping
\[F \mapsto L - E_1 - E_2\ , \quad S_1 \mapsto L - E_1\ , \quad S_2 \mapsto L - E_2.\]
This is described, for example, in \cite{frenkelmuller}. 
The canonical class gets mapped 
$$-c_1(TM) \mapsto -3L + E_1 + E_2$$ 
and there is an embedding 
$\overline{E(d',c')} \to \widehat{M}$, and so we can repeat the argument from Case $1$ above.  
More precisely, if the spheres have areas $b_1$ and $b_2$, respectively, then under this diffeomorphism 
the symplectic form on $\widehat{M}$ induces a symplectic form on $\mathbb{C}P^2 \# 2 \overline{\mathbb{C}P^2}$ 
that is obtained from $\mathbb{C}P^2$, normalized so that the line class has area $b_1 + b_2 - \delta$, by 
blowups of size $b_1 - \delta$ and $b_2 - \delta$.  The triple $(b_1 + b_2 - \delta; b_1 - \delta, b_2 - \delta)$ 
is the negative weight expansion for a rectangle of side lengths $b_1$ and $b_2$ with its top right 
corner removed, so the argument from Case $1$ gives an embedding of $E(d,e)$ into this toric domain, 
which in turn embeds into the toric domain associated to a rectangle of side lengths $b_1$ and $b_2$.
\end{proof}

The second half of the argument in the proof of Theorem~\ref{prop:mainproposition} also guarantees the following
Corollary.  As that argument has two
cases, so too does the Corollary.

\begin{corollary}
\label{prop:mainproposition2}
\phantom{let}

\begin{enumerate}
    \item 
Suppose $(b;b_1,b_2,\ldots,b_n)$ is a vector of non-negative integers that  both 
represents a blowup symplectic form on an $n$-fold blowup of projective space,
$M=\C P^2_b\ \#_i\  \overline{\C P}^2_{b_i}$, and also is
the negative weight expansion of 
a convex toric domain $X_{\Omega}$.  Then
$$
E(c,d) \embeds M\ \Longrightarrow  E(c,d) \embeds X_{\Omega}.
$$

\item Suppose $(a+b; a,b,c_1,c_2,\ldots)$ is a vector of non-negative integers that  both 
represents a symplectic form on  $\C P^1_a\times \C P^1_b$ blown
up symplectically
by sizes $(c_1,c_2,\ldots)$ and also is
the negative weight expansion of 
a convex toric domain $X_{\Omega}$.  Then
$$
E(c,d) \embeds M\ \Longrightarrow  E(c,d) \embeds X_{\Omega}.
$$
\end{enumerate}
\end{corollary}

We want to use Theorem~\ref{prop:mainproposition} to deduce that the embedding capacity function
for a closed manifold is equal to the embedding capacity function for a related toric domain.  
Inspecting Figure~\ref{fig:delzant}, the square in (c) is a Delzant polygon
with blowup vector $(4;2,2)$.
The triangle in Figure~\ref{fig:delzant}(a) corresponds to 
blowup vector $(3)$.
The polygons in Figure~\ref{fig:delzant} (b), (d), and (e) 
correspond to blowup vectors $(3;1)$, $(3;1,1)$, and $(3;1,1,1)$ respectively.
That leaves blowup vector $(3;1,1,1,1)$, which does not have a 
Delzant representative.
Nevertheless, we can recognize one of the toric domains with blowup vector $(3;1,1,1,1)$, 
shown in Figure~\ref{fig:eq-pol}, as 
the image of an integrable system on a smooth,
compact manifold $\mathscr{P}o\ell(1,1,1,1,1)$ that is not toric.
  Indeed, $\mathscr{P}o\ell(1,1,1,1,1)$ is known not to
admit any Hamiltonian circle action \cite[Theorem~3.2]{hausmann knutson}, even though
$\mathscr{P}o\ell(1-\delta,1+\delta,1,1-\delta,1+\delta)$ is a toric symplectic manifold for any $0<\delta<1$.
We will make use of this fact
in the following Lemma.

%\newpage

\begin{lemma}\label{rem:polygon}
Suppose that $(3;1,1,1,1)$ is the
negative weight expansion of a convex toric domain $X_{\Omega}$ and
let $M=\mathscr{P}o\ell(1,1,1,1,1)$ denote equilateral
pentagon space. Then 
$$
E(c,d) \embeds M\ \Longleftrightarrow  E(c,d) \embeds X_{\Omega}.
$$
\end{lemma}

\begin{proof}
For equilateral pentagon space $\mathscr{P}o\ell(1,1,1,1,1)$, the integrable system called the {bending flow}  \cite{kapovich-millson} has image as shown in the following figure.

  \begin{figure}[ht]
\centering
\includegraphics{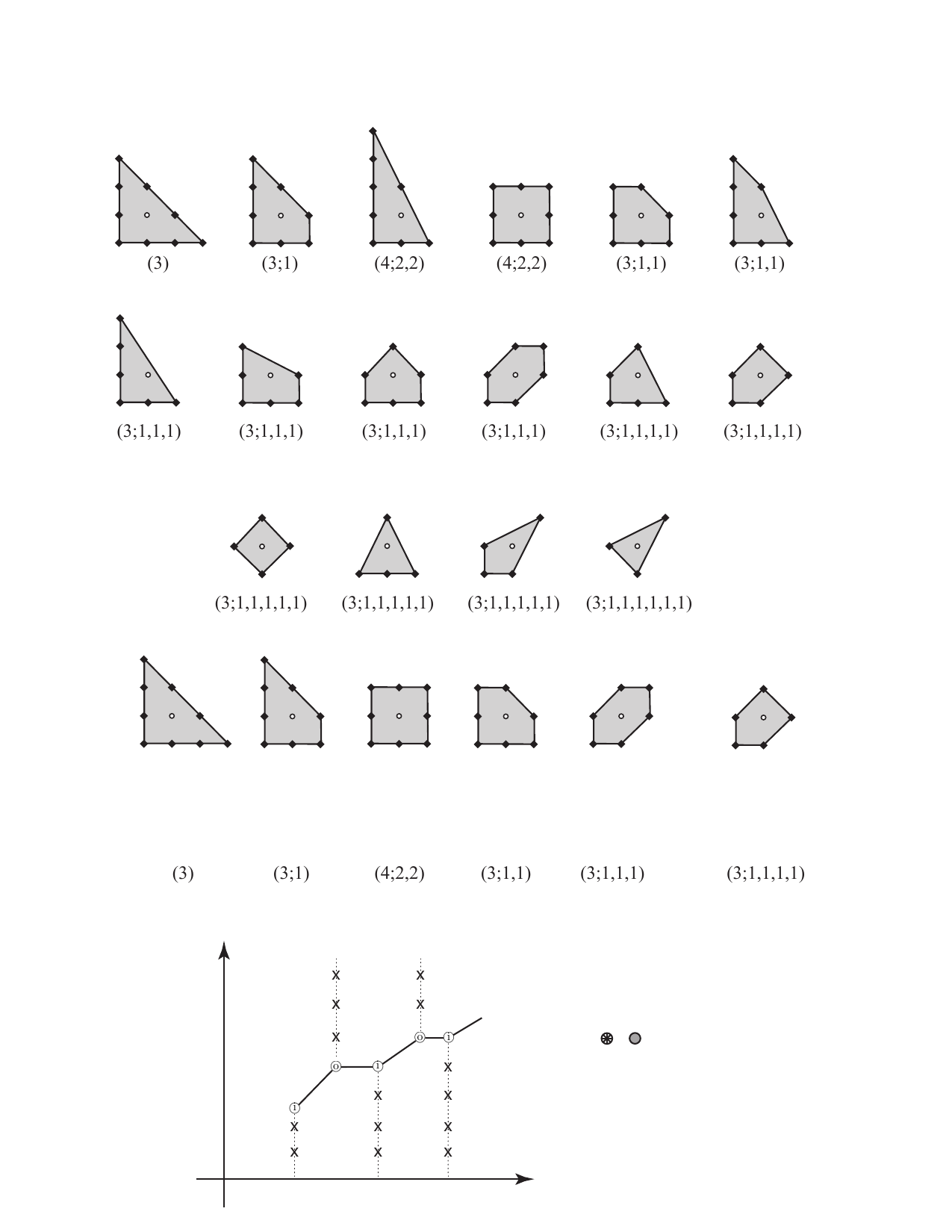}
\caption{The image of the bending flow integrable system on equilateral pentagon space $\mathscr{P}o\ell(1,1,1,1,1)
= \C P^2_3\# 4\overline{\C P}^2_1$.  
The bottom left vertex is at the origin.
Note that this is a region $\Omega$ that appears in Figure~\ref{fig:twelve} that has
blowup vector $(3;1,1,1,1)$.}
\label{fig:eq-pol}
\end{figure}

This integrable system extends a toric action on an open dense subset of $\mathscr{P}o\ell(1,1,1,1,1)$: we must simply remove two Lagrangian $S^2$s
that map to the points $(2,1)$ and $(1,2)$ in the Figure~\ref{fig:eq-pol}.  
These Lagrangian $S^2$s are the loci of points where the ``bending diagonals" vanish.
The dense subset has moment image the polytope in
Figure~\ref{fig:eq-pol} with the two vertices $(2,1)$ and $(1,2)$ removed.  The Local Normal Form theorem  \cite[Theorem B.3]{KL:noncpt toric} 
now guarantees that the relevant $\op{int}(X_\Omega)$ is in fact a subset of $\mathscr{P}o\ell(1,1,1,1,1)$.  This allows
us to conclude that if an ellipsoid $E(d,e)\embeds X_\Omega$, it must also embed in $\mathscr{P}o\ell(1,1,1,1,1)$.

On the other hand, Proposition~\ref{prop:mainproposition2}(1)
guarantees that if 
$$E(d,e)\embeds \mathscr{P}o\ell(1,1,1,1,1),$$ it also embeds into $X_\Omega$.
This completes the proof.
\end{proof}

We now also give the promised proof of the fourth item in Proposition~\ref{prop:properties}, which uses some of the 
same ideas in the of proof of Theorem~\ref{prop:mainproposition} .

\begin{proof}[Proof of Proposition~\ref{prop:properties}(4)]
Recall from \eqref{eqn:embeddingequation} that finding an embedding $E(1,a) \embeds X_{\Omega}$ 
is equivalent to finding a  ball-packing
\begin{equation}
\label{eqn:rearranged}
\bigsqcup_{i=1}^n B\left(\frac{a_i}{\lambda}\right)\sqcup \bigsqcup_{j=1}^N B(b_j)\embeds B(b),
\end{equation}
where $\lambda = \sqrt{a/vol}$, the $a_i$ are the weights of $E(1,a)$, and the vector $(b;b_1,\ldots,b_N)$ is the negative weight expansion for 
$X_{\Omega}$.  As in the proof of Theorem \ref{prop:mainproposition} and by the argument\footnote{\phantom{.}The result \cite[Corollary 1.6]{dan} is stated for a single domain, but as was already observed by Gutt-Usher \cite[\S 3]{gu} the proof works just as well for disjoint unions.} for \cite[Corollary 1.6]{dan}, in order to find an embedding  \eqref{eqn:rearranged}, it suffices to find, for any $0<\varepsilon <1$, an embedding
\begin{equation}
\label{eqn:rearranged2}
\bigsqcup_{i=1}^n \overline{B\left((1-\varepsilon) \frac{a_i}{\lambda}\right)}\sqcup \bigsqcup_{j=1}^N \overline{B((1-\varepsilon) b_j)}\embeds B(b).
\end{equation}

We will find this embedding by looking at the closed symplectic manifold $(M,\omega)$ which is the $N$-fold blowup
of $\mathbb{C}P^2_b$  with blowups of sizes $(1-\varepsilon)b_j$.  By the strong packing stability property 
\cite[Theorem 1]{bho}, there is some number $\delta$ associated to $M$ such that the only obstruction to embedding any number of (open) balls of parameter less than $\delta$ is given by the volume constraint.  Now choose $a$ sufficiently large, so that each $\frac{a_i}{\lambda}$ above is smaller than $\delta$; we can do this, because each $a_i$ is bounded above by $1$, while $\lambda=\sqrt{\frac{a}{\vol}}$ limits to $+\infty$ with $a$.  Then, strong packing stability applies to find an embedding of $\bigsqcup_{i=1}^n B\left(\frac{a_i}{\lambda}\right)$ into $M$.  
It therefore follows that we can find an embedding of the disjoint union of the closed balls $\overline{B\left((1-\varepsilon) \frac{a_i}{\lambda}\right)}$ into $M$ as well.   As in the proof of Theorem \ref{prop:mainproposition} above, we can then blow down to get an embedding of the desired
form \eqref{eqn:rearranged2}.
\end{proof}

\section{Pinpointing the location of the accumulation point}\label{sec:pinpoint}

The purpose of this section is to prove Theorem \ref{satisfies quadratic equation}.  We first give a roadmap of the proof.  The proof naturally breaks up into two parts. Here and below, let $a_0 \ge 1$ denote the solution to \eqref{eqn:theequation}. 

The first part of the proof, corresponding to the part of the proof up through Step 2, is concerned with establishing a certain very important estimate, which we now explain.  Recall that any obstructive class has a unique singular 
point $\mathfrak{s}$, guaranteed by Proposition~\ref{prop:there can be only one}. 
We call this unique point the {\bf breakpoint}.  The desired bound, which is inspired by 
\cite[Lem. 2.1.3]{mcduffschlenk}, relates $d$ and $|\mathfrak{s} - a_0|$ and
is stated precisely in \eqref{eqn:thebound}.
The rough idea is that a lower bound on $|\mathfrak{s} - a_0 |$ translates to an upper bound on $d$,
which then bounds the number of possible obstructive classes. 

As we will explain, it is almost an immediate consequence of \eqref{eqn:thebound} that if there are infinitely many distinct breakpoints of obstructive classes, then they must accumulate at $a_0$. 

To complete the proof, one must then relate singular points of the function $c_X$ to the breakpoints of obstructive classes;
this is the content of the second part of the proof.  The two are closely related, but the complication here is that a singular point of $c_X$ in principle need not arise from a breakpoint of an obstructive class, see Figure~\ref{fig:extra-singular-pts}.  Nevertheless, we are able to show that infinitely many singular points of $c_X$ correspond to infinitely many breakpoints, completing the argument.
We now give the details.

\begin{proof}[Proof of Theorem \ref{satisfies quadratic equation}]
To begin, it is convenient to introduce further notation. We let $a = p/q \geq1$ be a rational number with weight expansion $(a_1,\ldots,a_n)$ and $X$ be a convex toric domain with negative weight expansion $(b;b_1,\ldots,b_N)$. Let also $\lambda_a = \sqrt{\frac{a}{\vol}}$. We introduce the vector
$$\mathbf w=(\lambda_a b_1,\ldots,\lambda_a b_N,a_1,\ldots,a_n)$$
and use it to define the {\bf error vector} $\pmb{\epsilon}$ following \cite[(2.1.1)]{mcduffschlenk} by
\begin{equation}
\label{eqn:errorvector}
\mathbf m = \frac{d}{\lambda_a b} \mathbf w + \pmb{\epsilon}, 
\end{equation}
where $(d;\mathbf m)$ is a class as in 
equation \eqref{eq:class} satisfying \eqref{cond1} and \eqref{cond2}; we emphasize for the reader that $\mathbf{w}, \pmb{\epsilon}$ and $\lambda_a$ depend on $a$, though $\mathbf m$ does not. (Note that here and below we mean componentwise addition, in contrast to the sequence sum in Definition~\ref{def:seqsum}.  We will always denote the sequence sum by $\#$.)      It can be checked that $(d;\mathbf m)$ satisfies \eqref{cond3} and is thus called an obstructive class if and only if the inner product
\begin{equation}
\label{eqn:positivityofintersection}
\pmb{\epsilon} \cdot \mathbf w > 0.
\end{equation}

We now derive a key equality, see \eqref{eqn:coolequation} below.
We know that
\begin{equation}\label{eq:above}
 - \pmb{\epsilon} = \frac{d}{\lambda_a b} \mathbf w - \mathbf m.
 \end{equation}
Let $(d;\mathbf m)$ be an obstructive class and let $s_i$ denote the entries in $\mathbf w$.  Then combining 
equation~\eqref{eq:above} with \eqref{cond1} gives
\begin{align*}
- \sum_i {\epsilon}_i = \frac{d}{\lambda_a b} \left(\sum_i s_i\right) - (3d - 1) \\
 = 1 + \frac{d}{\lambda_a b} \left(\left(\sum_i s_i\right) - 3 \lambda_a b\right).
\end{align*}
Using Lemma~\ref{lemma:mcduff schlenk}\eqref{item:weight expansion}
and taking the absolute value of both sides, we can further rewrite the above as
\begin{equation}
\label{eqn:coolequation}
 \left| - \sum_i  {\epsilon}_i \right| = \left|1 + \frac{d}{\lambda_a b}\left(a + 1 + \left( \sum_i \lambda_a b_i \right) - 3 \lambda_a b - \frac{1}{q}\right)\right|.
\end{equation}

Before proceeding, let us say a few words about the 
significance of \eqref{eqn:coolequation}.  Recall from the ``roadmap" discussion before the proof that we seek a bound on $d$, in order to bound the number of obstructive classes.  The equation  \eqref{eqn:coolequation} gives such a bound given a bound on  $| \sum_i  {\epsilon}_i |$. Establishing a bound on $| \sum_i  {\epsilon}_i |$ is the content of the next three steps.

\vskip 0.1in

\noindent
{\bf Step 0: Ordering the class; capacity function at accumulation equals volume.}
We now assume here and below that the entries of $\mathbf m$ satisfy $\widetilde{m}_i \ge \widetilde{m}_j$ 
and $m_i \ge m_j$, whenever $i \le j$.  In other words, we will only analyze classes $\mathbf m$ 
for which this property holds; we call such an $\mathbf m$ {\bf ordered}.  The reason we can do this
is that if we have an arbitrary $\mathbf m$, and we permute its entries to make it ordered, then 
the left hand side of \eqref{cond3} for the permuted $\mathbf m$ will be at least as much as the 
value for the original $\mathbf m$.   Hence, in computing $\mu_{(d;\mathbf m)}(a) $, we can 
restrict to ordered $\mathbf m$.

It now follows that if $z_\infty$ is a limit of singular points $z_i$, then $c_X(z_{\infty})$ lies on the volume curve.  Otherwise, by continuity, there is some neighborhood $I$ of $z_{\infty}$ for which the distance between $c_X(a)$ and the volume curve is uniformly bounded away from zero
for all $a \in I$. 
However, this cannot occur: in this neighborhood, any obstructive class whose obstruction gives $c_X(a)$ for any $a \in I$ must have a uniform bound on $d$, using \eqref{dbound}, and then since our classes are ordered there are only a finite number of them, which is a contradiction in view of Corollary~\ref{it:one} since we have infinitely many singular points in $I$.

\vskip 0.1in

\noindent {\bf Step 1: A preliminary estimate.}  The purpose of this step is to prove a basic, but very important, 
estimate on any obstructive class, namely \eqref{eqn:criticalbound} below.

Let $\mathbf m$ be an ordered obstructive class, and let $\mathfrak{s}$ be a corresponding breakpoint; recall that this is the point from 
Proposition~\ref{prop:there can be only one} where $\ell(\mathfrak{s})=\ell( \mathbf m)$.  
Write $\mathfrak{s}=p/q$, where $p$ and $q$ are in lowest terms.  Assume that $\mathfrak{s} \ne 1$.  
We know from Lemma~\ref{lemma:mcduff schlenk}\eqref{item:small} 
that the smallest weight of $\mathfrak{s}$ must be $1/q$.  Moreover, we know that $E(1,\mathfrak{s})$ is not a ball.  
Hence, the smallest weight of $\mathfrak{s}$ must repeat at least twice.  We now claim that we must have 
\begin{equation}
\label{eqn:criticalbound}
\frac{d}{q\lambda_\mathfrak{s} b} > 1/4.
\end{equation}
(In fact, the constant $1/4$ here is not optimal, but suffices for our purposes.) To see why, first note that by condition~\eqref{cond2} and equation~\eqref{eqn:errorvector},
we have 
\begin{eqnarray*}
d^2+1 & = & \mathbf m\cdot \mathbf m \\
& = & \left( \frac{d}{\lambda_\mathfrak{s} b} \mathbf w + \pmb{\epsilon} \right) \cdot  \left( \frac{d}{\lambda_\mathfrak{s} b} \mathbf w + \pmb{\epsilon} \right) \\
& = & \frac{d^2}{\lambda_\mathfrak{s}^2b^2}\mathbf  w\cdot\mathbf  w + 2 \frac{d}{\lambda_\mathfrak{s} b} \mathbf w\cdot \pmb{\epsilon} + \pmb{\epsilon}\cdot \pmb{\epsilon}.
\end{eqnarray*}
We know that $\mathbf w\cdot \mathbf w = \mathfrak{s} + \lambda_\mathfrak{s}^2(b^2-\vol)$.  Noting that $\lambda_\mathfrak{s}^2 = \frac{\mathfrak{s}}{\vol}$, this simplifies to $\mathbf w\cdot\mathbf  w=\lambda_\mathfrak{s}^2b^2$,
and so 
\[
d^2+1 = d^2 + 2 \frac{d}{\lambda_\mathfrak{s} b} \mathbf w\cdot \pmb{\epsilon} + \pmb{\epsilon}\cdot \pmb{\epsilon}.
\]
Now recalling that \eqref{eqn:positivityofintersection}  says $\mathbf{w}\cdot \pmb{\epsilon}>0$, we conclude that
\begin{equation}
\label{eqn:randomepsilonbound}
\sum_i  {\epsilon}_i^2 < 1.
\end{equation}

Hence, in particular, each ${\epsilon}_i$ must be less than $1$.  
Remember now that we have
$$\begin{array}{rcll}
\mathbf m& = & \big(  \,  \begin{array}{|lcl|}\hline \widetilde{m}_1\ , & \dots\ , & \widetilde{m}_N\\ \hline \end{array} \ , &
 \begin{array}{|lcl|}\hline {m}_1\ , & \dots\ , & {m}_n\\ \hline \end{array}  \ \big) \\
\mathbf w& = & \big(  \,  \begin{array}{|lcl|}\hline \lambda_a b_1\ , & \dots\ , & \lambda_a b_N\\ \hline \end{array} \ , &
 \begin{array}{|lcl|}\hline {a}_1\ , & \dots\ , & {a}_n\\ \hline \end{array}  \ \big) \\
 \end{array}$$ 
where the entries in each box are in decreasing order,  the $m_i$ are positive integers, and the $a_i$ are the weight
expansion for $\mathfrak{s}$. In particular, we must have $a_{n-1}=a_n=\frac1q$.
Thus, the last two entries of the vector $\pmb{\epsilon} = \mathbf m-\frac{d}{\lambda_\mathfrak{s} b}\mathbf w$ are $m_{n-1}-\frac{d}{\lambda_\mathfrak{s} bq}$ and
$m_{n}-\frac{d}{\lambda_\mathfrak{s} bq}$.  In proving \eqref{eqn:criticalbound}, we can assume that $\frac{d}{\lambda_\mathfrak{s} bq} < 1,$ or else \eqref{eqn:criticalbound} already holds; thus, because ${\epsilon}_i<1$, we must have $m_{n-1}=m_n=1$.  
If contrary to the assumption \eqref{eqn:criticalbound} we had $\frac{d}{q\lambda_\mathfrak{s} b} \leq \frac14$,
then each of these last two terms would be at least $\frac34$, and so we would conclude that
\[ \sum_i {\epsilon}_i^2 \geq 9/16 + 9/16 > 1,\]
contradicting \eqref{eqn:randomepsilonbound}.     

\vskip 0.1in

\noindent {\bf Step 2.  Estimating $d$.}  We can now prove a strong estimate around $d$, namely the promised estimate \eqref{eqn:thebound}.  The rough idea behind this estimate is to combine the estimate \eqref{eqn:criticalbound} from Step 1  with the key \eqref{eqn:coolequation}.

The details are as follows.
  By the proof of 
  \eqref{eqn:coolequation}, we have that 
\[- \sum_i  {\epsilon}_i =  1+\frac{d}{\lambda_\mathfrak{s} b} \left(\mathfrak{s}+1 + \left(\sum_i \lambda_\mathfrak{s} b_i \right) - 3 \lambda_\mathfrak{s} b - \frac1q\right).\]
Let $L$ be the length of the weight expansion of $\mathfrak{s}$, plus a finite number 
$N$ of terms 
corresponding to the number of $b_i$.  
Applying Cauchy-Schwarz to $\pmb{\epsilon}$ and the vector $(1,\dots, 1)$ of length $L$, and using 
\eqref{eqn:randomepsilonbound}, we know that
\[ \left|\sum_i - {\epsilon}_i \right|< \sqrt{L}.\] 
The triangle inequality guarantees that 
$$
\left| -1-\sum_i{\epsilon}_i\right| \le 1 + \left| -\sum_i{\epsilon}_i\right|.
$$
We therefore get that
\[ \left| -1-\sum_i{\epsilon}_i\right| = \frac{d}{\lambda_\mathfrak{s} b} \left(\left| \mathfrak{s} + 1 + \left(\sum_i \lambda_\mathfrak{s} b_i \right) - 3 \lambda_\mathfrak{s} b - \frac1q \right|\right) \le 1+ \sqrt{L} .\]
We now want to bound $L$.
It is a basic fact about weight expansions, see \cite[Lemma 5.1.1]{mcduffschlenk}, that the length of the weight expansion for $\mathfrak{s}$ is bounded from above by $\lfloor \mathfrak{s} \rfloor + q$, where we recall that $\mathfrak{s}=\frac{p}{q}$ in lowest terms.
  To simplify the notation, define
\begin{equation}\label{eq:f}
\begin{array}{rcl}
f(a) & = & a+ 1 +\left( \sum_i \lambda_a b_i \right) - 3 \lambda_a b \\
& = & (a+1) - \left( \lambda_a\cdot \left(3b-\sum_i b_i\right)\right) \\
& = & a+1-\sqrt{a\cdot\frac{ \per^2}{\vol}}.
\end{array}
\end{equation}
We note that $f(a) = 0$ has the same solutions  as \eqref{eqn:theequation}, as can be seen by multiplying both sides of the equation $f(a) = 0$ by $\left(a+1+\sqrt{a\cdot\frac{ \per^2}{\vol}}\right)$; we will use this fact below.

In view of Proposition~\ref{prop:properties}(4), there is a constant $M$ such that $\mathfrak{s} \le M$ and we thus get
\[  \frac{d}{\lambda_\mathfrak{s} b} \left(\left| f(\mathfrak{s}) - \frac{1}{q}\right|\right) \le 1+\sqrt{N+M+q}.\]
Rearranging \eqref{eqn:criticalbound}, we have 
\[ q < \frac{4d}{\lambda_\mathfrak{s} b}.\]
Hence, we get
\begin{equation}
\label{eqn:thebound}
\frac{d}{\lambda_\mathfrak{s} b} \left(\left|f(\mathfrak{s}) - \frac{1}{q}\right|\right) \le 1+\sqrt{N+M + 4\frac{d}{\lambda_\mathfrak{s} b}}.
\end{equation} 
The importance of this bound will already be evident in the next step.

\vspace{2 mm}

\noindent{\bf Step 3.  Infinitely many distinct breakpoints must accumulate at $a_0$.}  Using \eqref{eqn:thebound} we now give the promised proof that infinitely many distinct breakpoints of obstructive classes must accumulate at $a_0$.  Assume that there are infinitely many distinct breakpoints $a_i = p_i/q_i$ corresponding to ordered obstructive classes $(d_i;{\bf m}_i)$.  If $a$ is sufficiently large, then $c_X(a)$ lies on the volume curve by Proposition~\ref{prop:properties}\eqref{it:stab}.  Hence, the $a_i$ must accumulate at some value $s_\infty$.  If $s_\infty \ne a_0$, then 
\[ |f(s_\infty)| > 0.\]  
Assuming this, then there are infinitely many $a_i$ with the property that $| f(a_i) - 1/q_i |$ is uniformly bounded away from $0$, since the $q_i$ must limit to infinity.  Hence, by \eqref{eqn:thebound} the corresponding values $d_i$ are uniformly bounded from above.  However, this is a contradiction, since there are only finitely many ordered obstructive classes $(d; \mathbf{m})$ with prescribed $d$, and a single obstructive class has only finitely many breakpoints by Corollary~\ref{it:one}.

\vspace{2 mm}

\noindent {\bf Step 4.  Accumulation point must be $a_0$.}  
With the key estimate \eqref{eqn:thebound}, we can complete the proof of Theorem \ref{satisfies quadratic equation}.  The main remaining challenge in putting everything together is that \eqref{eqn:thebound}
only holds at the unique point guaranteed by Proposition~\ref{prop:there can be only one} --- recall that we are calling this unique point the {\em breakpoint} --- for the obstructive class $(d;\mathbf{m})$; however, even when $c_X(a) = \mu_{d;\mathbf{m}}(a)$ we cannot guarantee that $a$ is the breakpoint for $(d;\mathbf{m}).$
The example in Figure~\ref{fig:extra-singular-pts} illustrates how a singular point need not
be the breakpoint of an obstruction.
This is not fatal to the argument, but requires a little bit of care.

%\newpage

\begin{center}
  \begin{figure}[ht]
\begin{overpic}[%grid,
scale=0.75,unit=1mm]{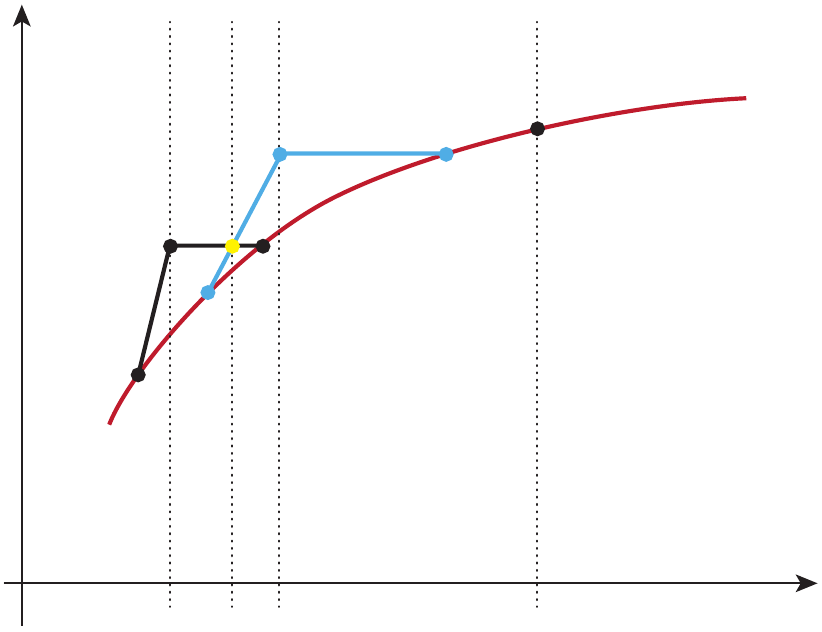}
   \put(20,-1){\small{$a_i$}}
   \put(28,-1){\small{$z_i$}}
   \put(35,-1){\small{$a_{i+1}$}}
   \put(68,-1){\small{$z_\infty$}}
\end{overpic}
 \vskip 0.3in
\caption{The singular point at $z_i$ is not
the breakpoint of a single obstruction. Instead, it arises where two obstructions cross.
The red curve represents the volume lower bound.}
\label{fig:extra-singular-pts}
\end{figure}
\end{center}

The details are as follows.
We assume first that the $z_i$ are converging to $z_{\infty}$ from the left; the argument in the case where the
$z_i$ are converging from the right is completely analogous and so we omit it for brevity. 

We pass to a subsequence of $z_i$ that increase to $z_\infty$.
We now want to take a sequence of obstructive classes.
In fact, we cannot assume that there are any obstructive classes at all at $z_i$, since $c_X(z_i)$ could lie on the volume curve.  However, since the $z_i$ are singular points, we can assume that the $(d_i; {\bf m}_i)$ are obstructive at points $z'_i$ arbitrarily close to any $z_i$; it will be convenient to fix such points $z'_i$ to be within distance $\kappa_i := \frac{|z_i - z_{\infty} |}{2}$ of $z_i$.  
By passing to a subsequence, we can also assume that the $(d_i;{\bf m}_i)$ are distinct in view of Corollary~\ref{it:one}.

%\newpage

\begin{center}
  \begin{figure}[ht]
\begin{overpic}[%grid,
scale=0.5,unit=1mm]{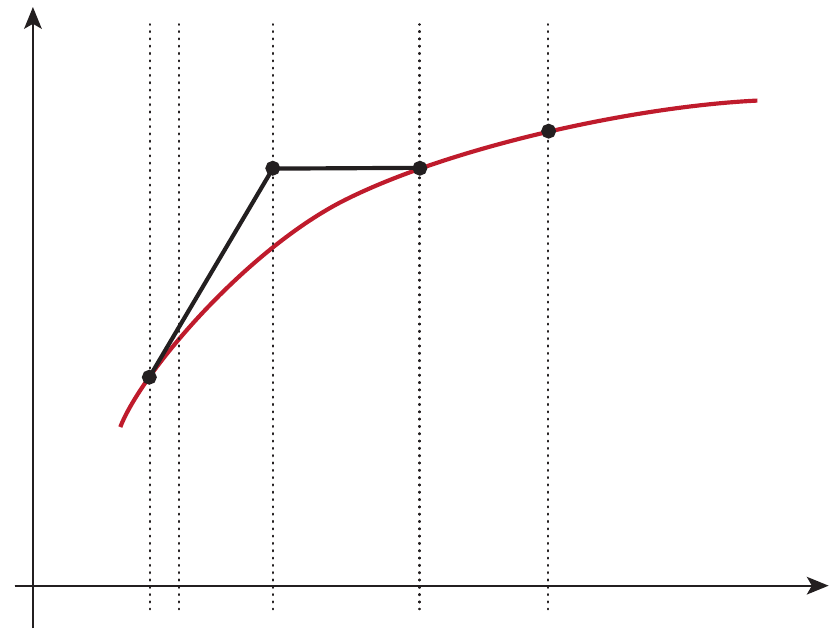}
   \put(10,-1){\small{$z_i$}}
   \put(15,-1){\small{$z_i'$}}
   \put(22,-1){\small{$a_{i}$}}
   \put(34,-1){\small{$z_{i}^*$}}
   \put(45,-1){\small{$z_\infty$}}
   \put(36,-7){(a)}
\end{overpic}\hskip 0.2in
\begin{overpic}[%grid,
scale=0.5,unit=1mm]{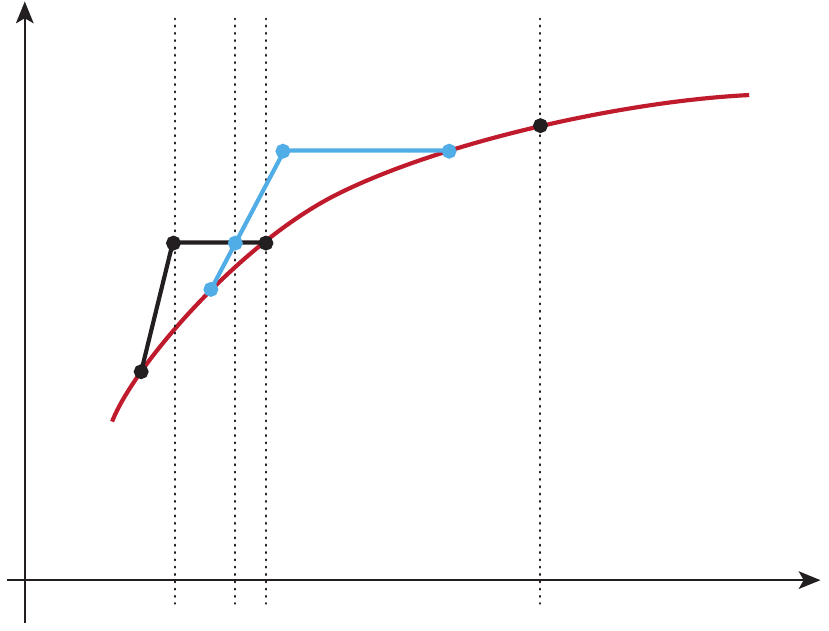}
   \put(14,-1){\small{$a_i$}}
   \put(19,-1){\small{$z_i$}}
   \put(22,-1){\small{$z_{i}^*$}}
   \put(46,-1){\small{$z_\infty$}}
   \put(36,-7){(b)}
\end{overpic}
 \vskip 0.3in
\caption{(a) The point $z_i$ could be a singular point but not a value where the capacity
function is obstructed.  Thus we may need to replace it with a nearby $z_i'$ where the capacity function
is obstructed. We will also refer to the endpoint
$z_i^*$ of the interval obstructed by $(d_i; {\bf m}_i)$.
(b) The singular point $z_i$ could occur at a non-breakpoint 
(exactly as in Figure~\ref{fig:extra-singular-pts} above).  In this case, we can take
$z_i=z_i'$ and still denote by $z_i^*$ the endpoint of the interval obstructed by $(d_i; {\bf m}_i)$. 
In both cases, the red curve represents the volume lower bound.}
\label{fig:z-prime-star}
\end{figure}
\end{center}

Now, let $a_i$ be the breakpoint corresponding to $z'_i$. 
We must have $a_i < z_{\infty}$, since $c_X(a)$ is obstructed on  the interval with endpoints $z'_i$ and $a_i$, but  $c_X(z_{\infty})$ lies on the volume curve by Step 0.
If infinitely many of the $a_i$ satisfy $a_i \ge z'_i$, then necessarily these $a_i$ must be distinct, so by Step 3 they must accumulate at $a_0$, and so since the $z_i$ are accumulating at $z_\infty$, we must have $a_0 = z_\infty$ as claimed.

 Thus, we can assume that
 infinitely many of the $a_i$ satisfy $a_i < z'_i$. Under this assumption, 
 pass to a subsequence so that all $a_i$ have this property.  If infinitely many $a_i$ are converging to $z_\infty$, then we have $a_0 = z _\infty$ by Step 3, and so we are done.  Thus, we can assume that the $a_i$ are uniformly bounded away from $z_\infty$.  
Because $a_i < z'_i$, the function $\mu_{(d_i,{\bf m}_i)}(a)$  is linear on the maximal interval 
$[a_i, z^*_i)$ on which $(d_i, {\bf m}_i)$ is obstructive.  As above, these points $z^*_i$ satisfy 
$z^*_i \le z_{\infty}$ 
since $c_X(z_{\infty})$ lies on the volume curve. 
As the $z_i$ are converging to $z_{\infty}$, it then follows that the $z^*_i$ must be as well; it 
therefore follows that the slope of the volume curve at $z^*_i$ is converging to the slope of the 
volume curve at $z_{\infty}$.  
Now the line segment from  $(a_i,\mu_{ (d_i, \bf{m}_i ) }(a_i))$  to $\left(z_i^*, \sqrt{ \frac{z^*_i}{vol}}\right)$ lies
above the volume curve.  So this line segment
must also be above the tangent line to the volume curve at $z_i^*$ on the interval $(a_i,z_i^*)$, because the volume curve is concave.   
Thus, we must have a uniform upper
bound on $d_i$ across the $(d_i, {\bf m}_i)$, by \eqref{dbound}: the length of the 
interval $(a_i,z^*_i)$ is uniformly bounded away from zero, so 
$\left|\mu_{ (d_i, \bf{m}_i ) }(a_i) - \sqrt{ \frac{a_i}{vol}}\right|$ is uniformly bounded away from zero,  
as a consequence of the upper bound on the slope of the line segment.  The  uniform bounds on the $d_i$ mean we have only finitely many obstructive classes, but we are assuming that the $(d_i, {\bf m}_i)$ are distinct, which is a contradiction.
\end{proof}

\begin{remark}
It would be interesting to understand whether an analogue of Theorem \ref{satisfies quadratic equation} still holds, without the assumption of finitely many $b_i$; this could be useful for understanding embeddings into an irrational ellipsoid, for example.  Most of the above argument should go through, except that now the number $N$ in \eqref{eqn:thebound} would be infinite. 
It is nevertheless plausible that there is a way around this.
\end{remark}

\begin{remark}
\label{rmk:asymptotics}
We could alternatively think about Theorem \ref{satisfies quadratic equation} from the point of view of the ECH capacities reviewed in Section \ref{sec:ech}.  This works as follows.

Normalize the domain to have the same volume as the target; in other words, consider the problem of embedding an ellipsoid $E\left(\sqrt{\frac{ \op{vol} }{a}}, \sqrt{a \op{vol}}\right)$ into $X$.  If we assume that $a$ is irrational, and set the perimeter of the domain and the target equal to each other, we get the equation
\begin{equation}
\label{eqn:perimetersequal}
\sqrt{\frac{ \op{vol}}{a}} + \sqrt{a \op{vol}} = \op{per},
\end{equation}
which can be rearranged to \eqref{eqn:theequation}.

There is in turn a heuristic for why \eqref{eqn:perimetersequal} is natural to consider in view of the question of 
finding infinite staircases for this problem from the point of view of ECH capacities.  The justification for normalizing 
the volumes to be equal is as follows: by packing stability (Proposition~\ref{prop:properties}\eqref{it:stab}),
an infinite staircase must accumulate at {\em some} 
point $s_0$.  It is not too hard to show in addition that the embedding function at $s_0$ must
lie on the volume curve, as in Step~3 of the Proof of Theorem~\ref{satisfies quadratic equation} above.  

Now, it has been shown \cite[Theorem~1.1]{asymptotics} that for the manifolds we consider here,  asymptotically 
ECH capacities recover the volume; moreover, the subleading asymptotics have recently been studied, see for 
example \cite[Theorem~3]{cgs}, 
and in the present situation these next order asymptotics are well-understood as well.  These can be interpreted as recovering 
the perimeter (see \cite[Proposition~16]{cgs}). These asymptotics dominate when we normalize the leading asymptotics, 
which are the volume.  

With all of this understood, here is the promised heuristic: if the subleading asymptotics of the domain are 
larger than the subleading asymptotics of the target (which happens when $s_0$ irrational is smaller than the 
solution to \eqref{eqn:perimetersequal}), then no volume preserving embedding can exist.  On the other hand, 
if the subleading asymptotics of the domain are smaller than the subleading asymptotics of the target (which 
happens when $s_0$ irrational is larger than the solution to \eqref{eqn:perimetersequal}), then only finitely 
many ECH capacities can give an obstruction at $s_0$; one might then hope that the same is true in a neighborhood of $s_0$, and then argue that these finitely many capacities are not enough to generate an infinite staircase.  
Assuming all this, at least when $s_0$ is irrational the only possibility would be that the accumulation point is actually given by the relevant solution to \eqref{eqn:perimetersequal}.

Note, however, that this is quite different than the proof we give above for Theorem \ref{satisfies quadratic equation}.  
It is easy to make the heuristic above rigorous concerning the case where the subleading asymptotics of the domain 
are smaller than the subleading asymptotics of the target; and, in the other case, it is easy to make rigorous that {\em at} $s_0$, there are only finitely many ECH capacities that are obstructive; however, establishing the needed result in a neighborhood of $s_0$ is much more subtle.    
Of course, another issue is that we assumed $s_0$ irrational in our heuristic discussions.  If $a = s_0$ is rational instead of irrational, then the perimeter of 
the domain is different than what is said above, so \eqref{eqn:perimetersequal} does not hold.  
All of this is why we give a rather 
different argument, inspired by the work of McDuff and Schlenk in \cite{mcduffschlenk}.
\end{remark}

We now also give the promised proof of the fifth item in Proposition~\ref{prop:properties}, which uses some of the same ideas as in the proof of Theorem~\ref{satisfies quadratic equation}.   

\begin{proof}[Proof of Proposition~\ref{prop:properties}\eqref{it:piecewise}]
Let $\widetilde{a}$ be a point which is not a limit of singular points.  
Then, there is some open interval $I=(m,n)$ containing $\widetilde{a}$ on which the only possible singular point of $c_X(a)$ is $\widetilde{a}$ itself.  If $c_X(a)$ is equal to the volume obstruction on $I$, then the conclusion of the proposition holds near $\widetilde{a}$.
Thus we can assume there is some point $y$ in $I$ on which $c_X(y)$ is strictly greater than the 
volume obstruction; without loss of generality, assume that $y < \widetilde{a}$.  

There is now some subinterval $I' \subset I$, containing $y$,  on which $c_X(a)$ is the supremum of finitely many obstructive classes, each of which is piecewise linear on $I'$, with at most one singular point.  It follows that $c_X$ is piecewise linear on $I'$; since $\widetilde{a}$ is the only possible singular point of $c_X(a)$ on $I$, it follows that in fact $c_X(a)$ is linear on $(m,\widetilde{a}].$  We now apply the same argument to the interval $(\widetilde{a},n)$.  Namely, if $c_X(a)$ is the volume on $(\widetilde{a},n)$, then the conclusion of the proposition holds near $\widetilde{a}$, so we are done. 
Otherwise, we can assume there is some point $y'$ in $(\widetilde{a},n)$ such that $c_X(y')$ is strictly greater the volume obstruction.  
Then, as in the $y < \widetilde{a} $ case, $c_X(a)$ is linear on $[\widetilde{a},n)$, as desired.
\end{proof}

\section{The existence of the Fano staircases}\label{section:existence}

\noindent To prove Theorem \ref{thm:main}, we begin by showing that the purported $x$-coordinates 
intertwine: $x^\outter_n < x^\inner_n < x^\outter_{n+1}$.
We then take a limit as  $n\to\infty$, verifying that the $x$-coordinates  $x^\outter_n$ tend to  $a_0$  (and therefore also $x^\inner_n$ 
tend to $a_0$ as well) and $y^\outter_n=y^\inner_n$ tend to $\sqrt{\frac{a_0}{\vol}}$.
Next we show that $y^\outter_n\leq c_X(x^\outter_n)$ and that  $c_X(x^\inner_n)\leq y^\inner_n$. For the first inequality, 
we find an obstruction, and for the second, we produce an explicit embedding.
Finally, we use the fact that $c_X(a)$ is continuous, non-decreasing, and has the scaling property to 
conclude that the graph of the function must consist of  line segments alternately joining points of the 
two sequences $(x^\inner_n,y^\inner_n)$ and $(x^\outter_n,y^\outter_n)$, and that these line segments 
alternate: some are horizontal and the others, when extended to be lines, pass through the origin.  This is illustrated in Figure~\ref{fig:proofpic}.

\newpage

\begin{figure}[h!]
\centering
\includegraphics[width=0.55\textwidth]{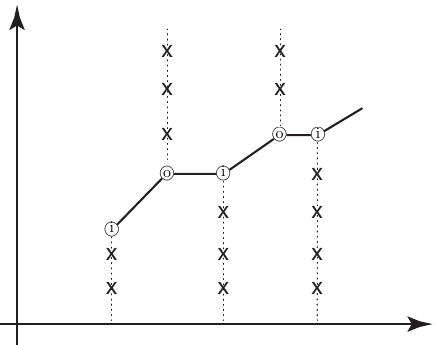}
\caption{The inner corners are marked \textcircled{i} and the outer corners are
marked \textcircled{o}. The exed out lines represent the inequalities $y^\outter_n\leq c_X(x^\outter_n)$ and $c_X(x^\inner_n)\leq y^\inner_n$. The properties of the embedding capacity function then imply that its graph consists of line segments between the corners.}
\label{fig:proofpic}
\end{figure}

Before we begin, it will be convenient to catalogue certain combinatorial identities 
that hold for our sequences.  These will be essential for the inductive proofs that follow.

\begin{lemma}\label{lemma:identities}
Let $X$ be a convex toric domain with negative weight expansion $(b;b_1,\ldots,b_n)$ 
equal to 
$$
(3)\ , \ (3;1)\ , \ (3;1,1)\ , \ (3;1,1,1)\ , \ (3;1,1,1,1)\ , \mbox{or } (4;2,2).
$$
When $J=2$, that is, for the sequences with recurrence relation $g(n+4)=Kg(n+2)-g(n),$ the following identities hold:
\begin{equation*}g(n)+g(n+2)=\beta_{n+1}g(n+1) \tag{$\clubsuit$}\end{equation*}
\begin{equation*}g(n)^2+g(n+2)^2-K g(n)g(n+2)=- \alpha \beta_{n+1}, \tag{$\vardiamondsuit$}\end{equation*}
\begin{equation*}g(n)g(n+3)=g(n+1)g(n+2)+\alpha \tag{$\varheartsuit$}\end{equation*}
where $K=\vol-2$, the sequence seeds $g(0),\ldots, g(3)$, and the parameters $\alpha$ and $\beta_n$ are:

%\newpage

\renewcommand{\arraystretch}{1.5}\begin{table}[htbp]
\centering 
\scriptsize
\begin{tabular}{| c | c | c | c | c  |}
\hline

Negative weight expansion & $K$ & Seeds & $\alpha$    & $\beta_n$    
\\ \hline \hline
$(3)$     & $7$ & $2,1,1,2$ & $3$  & $3$  
\\  \hline
$(4;2,2)$  & $6$ & $1,1,1,3$ & $2$  & $ \left\{\begin{array}{ll}2,& n \text{ odd}\\ 4,& n \text{ even} \end{array}\right.$ 
 \\   \hline
$(3;1,1,1)$  & $4$ & $1,1,1,2$  & $1$  & $\left\{\begin{array}{ll}2,& n \text{ odd}\\ 3,& n \text{ even} \end{array}\right.$
\\   \hline
$(3;1,1,1,1)$   & $3$ & $1,2,1,3$ & $1$  & $\left\{\begin{array}{ll}1,& n \text{ odd}\\ 5,& n \text{ even} \end{array}\right.$
\\  \hline
\end{tabular}
\end{table}

When $J=3$, that is, for the sequences with recurrence relation $g(n+6)=Kg(n+3)-g(n)$, the following identities hold:

\begin{equation*} g(n)+g(n+3)=
\left\{\begin{array}{ll}
g(n+1)+g(n+2),& n\equiv0 \mod 3\\   
2g(n+1)+g(n+2),& n\equiv1 \mod 3\\  
g(n+1)+2g(n+2),& n\equiv2 \mod 3\\ 
\end{array}\right. \tag{$\clubsuit$ for (3;1)}\end{equation*}

\begin{equation*} g(n)+g(n+3)=
\left\{\begin{array}{ll}
g(n+1)+g(n+2),& n\equiv0 \mod 3\\   
g(n+1)+2g(n+2),& n\equiv1 \mod 3\\  
2g(n+1)+g(n+2),& n\equiv2 \mod 3\\ 
\end{array}\right. \tag{$\clubsuit$ for (3;1,1)}\end{equation*}

\begin{equation*}g(n)^2+g(n+3)^2-K g(n)g(n+3)=- \beta_{n+1}, \tag{$\vardiamondsuit$}\end{equation*}
\begin{equation*}g(n)g(n+4)=g(n+1)g(n+3)+\delta_n \tag{$\varheartsuit.1$}\end{equation*}
\begin{equation*}g(n)g(n+5)=g(n+2)g(n+3)+\mu_n \tag{$\varheartsuit.2$}\end{equation*}
where $K=\vol-2$, the sequence seeds $g(0),\ldots, g(5)$, and the parameters $\beta_n$, $\delta_n$ and $\mu_n$ are,
with all congruences \hskip -0.1in $\mod 3$:

\renewcommand{\arraystretch}{1.5}\begin{table}[htbp]
\centering 
\scriptsize
\begin{tabular}{| c | c | c | c | c | c |}
\hline
  &  $K$ & Seeds & $\beta_n$    &  $\delta_n$ & $\mu_n$ 
\\ \hline \hline
$(3;1)$   & $6$ & $1,1,1,1,2,4$ &  $\left\{\begin{array}{ll}4,& n\equiv 1 \\ 
7,& n\equiv 0,2 \end{array}\right.$  
& $ \left\{\begin{array}{ll}1,& n\equiv 0,2 \\ 2,& n\equiv 1  \end{array}\right.$    & $3$ 
\\  \hline
$(3;1,1)$  & $5$ & $1,1,1,1,2,3$  &  $ \left\{\begin{array}{ll}3,& n\equiv 1 \\ 5,& n\equiv 0,2 
\end{array}\right.$  & $1$ &  $\left\{\begin{array}{ll}2,& n\equiv 0,1 \\ 3,& n\equiv 2  \end{array}\right.$  
\\  \hline
\end{tabular}
\end{table}

\end{lemma}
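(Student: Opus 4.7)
The plan is to prove all the identities uniformly by combining direct verification of a small number of base cases with a discrete conservation law for the recurrence $g(n+2J) = Kg(n+J) - g(n)$.

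For $\vardiamondsuit$ and for $\varheartsuit$ (including $\varheartsuit.1$ and $\varheartsuit.2$), I would observe that the left-hand side is invariant under the shift $n \mapsto n+J$. For example, setting
\[Q(n) = g(n)^2 + g(n+J)^2 - K g(n) g(n+J),\]
a one-line substitution using $g(n+2J) = K g(n+J) - g(n)$ yields $Q(n+J) = Q(n)$. The same direct computation shows that the two-by-two determinants
\[g(n)g(n+J+1) - g(n+1)g(n+J) \quad\text{and}\quad g(n)g(n+J+2) - g(n+2)g(n+J)\]
are invariant under $n \mapsto n+J$. Since each of these identities asserts that the left-hand side equals a prescribed sequence of period $J$, the problem reduces to verifying the claim on the $J$ residue classes modulo $J$ by direct computation from the seeds in the tables.

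The identity $\clubsuit$ is not a shift invariant, so I would prove it by induction on $n$. The base cases $n = 0, \dots, 2J-1$ are read off directly from the seeds. For the inductive step, I would use the recurrence to express $g(n+2J)$ in terms of $g(n+J)$ and $g(n)$, then eliminate $g(n)$ (and, in the $J=3$ case, also $g(n+1)$ and $g(n+2)$) using earlier instances of $\clubsuit$. The closure condition then collapses to an algebraic identity among $K$ and the periodic constants: in the $J=2$ case it amounts to the single relation $K + 2 = \beta_{n+1}\beta_{n+2}$, which I verify row by row in the $J=2$ table (e.g.\ for $(3;1,1,1)$, $K=4$ and $\beta_{\text{odd}}\beta_{\text{even}} = 2\cdot 3 = 6 = K+2$). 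The $J=3$ case splits into three subcases according to $n \bmod 3$ and produces analogous numerical identities, each immediate from the $J=3$ table.

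The main obstacle is bookkeeping rather than any conceptual difficulty: the $J=3$ form of $\clubsuit$ has three subcases for each of the two negative weight expansions, and the resulting closure relations (together with all base cases) must be recorded individually. Once the shift-invariance observation handles $\vardiamondsuit$ and $\varheartsuit$, and the closure relations for $\clubsuit$ have been confirmed in each row of the tables, the induction is mechanical.
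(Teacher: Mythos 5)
Your proposal is correct and matches the paper's (very terse) proof in substance: the paper simply asserts that the identities "can be proved by induction for each congruence class of $n$," noting the useful fact $\beta_n\beta_{n+1}=\vol=K+2$ for $J=2$, which is exactly the closure relation you isolate. Your packaging of $\vardiamondsuit$, $\varheartsuit$, $\varheartsuit.1$, $\varheartsuit.2$ via shift-invariance of the quadratic form and the two-by-two determinants under $n\mapsto n+J$ is a clean way to organize that induction — it makes explicit why only finitely many base cases are needed and why the verification is a finite computation from the seeds.
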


\begin{proof}
We prove the identities for $J=2$, the ones for $J=3$ are proved similarly.
For the base case, it is straightforward to show that each of the identities holds for $n=0$ and $1$.

Next, we prove the induction step for $\clubsuit$. For this purpose, we assume that the identity holds for $n=m$ and $n=m+1$ and aim to show that it also holds for $n=m+2$:
{\small
$$
\begin{array}{lr}
  g(m+2)+g(m+4) &  \\
\phantom{..} =  (K+1)g(m+2)-g(m)  & \phantom{..} \text{{\footnotesize (1)}}\\
\phantom{..}= (K+2)g(m+2) -\left(g(m+2)+g(m)\right)&  \text{{\footnotesize (2)}}\\
\phantom{..}= (K+2)g(m+2)-\beta_{m+1}g(m+1) &\text{{\footnotesize (3)}}\\
\phantom{..}= (K+2)g(m+2)-\beta_{m+1}\left(g(m+1)\right) +g(m+3)  & \\
 \phantom{=(K+2)} +\beta_{m+1}g(m+3) &\text{{\footnotesize (4)}}\\
\phantom{..}= (K+2)g(m+2)-\beta_{m+1}\beta_{m+2}g(m+2) +\beta_{m+1}g(m+3)  &\text{{\footnotesize (5)}}\\
\phantom{..}=\beta_{m+3}g(m+3). &\text{{\footnotesize (6)}} 
\end{array}
$$
}

In line (1) above we used the recurrence relation formula, in lines (3) and (5) we used the induction hypothesis, and in line (6) we used the facts that $\beta_n$ only depends on the parity of $n$ and that $\beta_{n} \beta_{n+1}=K+2$.

Now we move on to $\vardiamondsuit$. We show that the left-hand side of the identity with $n=m+2$ equals the left-hand side with $n=m$. Since the right-hand side only depends on the parity of $n$, this will imply that the identity holds for all $n$.
Using the recurrence relation in line (2), we have:
{\small
\begin{align*}
g(m+4)^2-g(n)^2&=\left(g(m+4)+g(n)\right)\left(g(m+4)-g(n)\right) & \text{{\footnotesize (1)}}\\
&=K g(m+2)\left(g(m+4)-g(n)\right)& \text{{\footnotesize (2)}}\\
&=K g(m+2) g(m+4)- K g(m) g(m+2).&\text{{\footnotesize (3)}}
\end{align*}
}
Rearranging and then adding $g(m+2)^2$ on both sides, we get as desired:
$$
\begin{array}{c}
g(m+4)^2+g(m+2)^2-K g(m+2)g(m+4) = \phantom{BOOMBOO} \\
\phantom{BOOMBO} =g(m+2)^2+g(m)^2-K g(m)g(m+2).
\end{array}
$$

Finally, we tackle $\varheartsuit$. For this purpose, we assume that the identity holds for $n=m$ and aim to show that it also holds for $n=m+2$:
{\small
\begin{align*}
g(m+2) g(m+5)&=g(m+2)\left(K g(m+3)-g(m+1)\right) & \text{{\footnotesize (1)}}\\
&=K g(m+2) g(m+3)-g(m+1) g(m+2)& \text{{\footnotesize (2)}}\\
&=K g(m+2) g(m+3)- g(m) g(m+3) + \alpha &\text{{\footnotesize (3)}}\\
&=g(m+3)\left(K g(m+2)-g(m)\right)+\alpha & \text{{\footnotesize (4)}}\\
&=g(m+3)g(m+4) + \alpha.&\text{{\footnotesize (5)}}
\end{align*}
}

\noindent Here we have used the recurrence relation formula in lines (1) and (5), and the induction hypothesis in line (3).
\end{proof}

We now use the identities in Lemma~\ref{lemma:identities} to establish the relationships among
the $x$- and $y$-coordinates of purported corners of the ellipsoid embedding functions.

\begin{proposition}\label{prop:3things}
The recurrence relations above define inner and outer corners
respectively with coordinates:
$$
\textstyle{(x^\inner_n,y^\inner_n)=\left( \frac{g(n+J)\left( g(n+1)+g(n+1+J) \right)}{\left(g(n)+g(n+J)\right) g(n+1)},  \frac{g(n+J)}{g(n)+g(n+J)}  \right),}$$
$$
\textstyle{(x^\outter_n,y^\outter_n)= \left( \frac{g(n+J)}{g(n)},\frac{g(n+J)}{g(n)+g(n+J)} \right).}$$ 
These coordinates satisfy:
\begin{enumerate}
\item $\displaystyle{x^\outter_n < x^\inner_n < x^\outter_{n+1}}$ ; 
\item $\displaystyle{\lim_{n\to\infty} x^\outter_n=\lim_{n\to\infty} x^\inner_n=a_0}$ ; and
\item $\displaystyle{\lim_{n\to\infty} y^\outter_n=\lim_{n\to\infty} y^\inner_n=\sqrt{\frac{a_0}{\vol}}}$ .
\end{enumerate}
\end{proposition}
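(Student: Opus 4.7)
The plan is to observe that all three claims reduce to understanding the growth rate of the interleaved subsequences of $g(n)$, and then to dispatch the arithmetic by invoking the identities in Lemma~\ref{lemma:identities} together with the reflexivity relation $\per=\vol$.

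First, since $y^\inner_n = y^\outter_n$ by direct inspection of the formulas, the chain in (1) reduces to a single inequality: cross-multiplication shows that both $x^\outter_n < x^\inner_n$ and $x^\inner_n < x^\outter_{n+1}$ are equivalent to
$$g(n)\,g(n+J+1) \;>\; g(n+1)\,g(n+J).$$
When $J=2$ this is exactly identity $(\varheartsuit)$ with $\alpha>0$; when $J=3$ it is $(\varheartsuit.1)$ with $\delta_n>0$. So (1) follows immediately from the table in Lemma~\ref{lemma:identities}.

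For (2), I would fix a residue class $n_0 \bmod J$ and observe that the interleaved subsequence $h(k):=g(n_0+kJ)$ satisfies the second-order linear recurrence $h(k+2)=K\,h(k+1)-h(k)$, whose characteristic polynomial is $\lambda^2-K\lambda+1$. Because $K\geq 3$ in every case of Table~\ref{table:recurrence}, the two roots $\lambda_\pm=(K\pm\sqrt{K^2-4})/2$ are real and distinct, and they multiply to $1$, so $\lambda_+>1>\lambda_->0$. The seeds are positive integers, so in the Binet formula $h(k)=A\lambda_+^k+B\lambda_-^k$ the coefficient $A$ must be positive (otherwise the positive integer sequence $h(k)$ would eventually become negative or decay to zero); the subdominant term then decays, giving $h(k+1)/h(k)\to\lambda_+$. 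Consequently $x^\outter_n=g(n+J)/g(n)\to\lambda_+$, and this $\lambda_+$ is the unique root $>1$ of $\lambda^2 - K\lambda+1=0$, which is exactly \eqref{eqn:theequation} under $K=\per^2/\vol-2$. For the inner corners, the algebraic factorization
$$x^\inner_n \;=\; \frac{g(n+J)}{g(n)+g(n+J)}\cdot\frac{g(n+1)+g(n+1+J)}{g(n+1)} \;=\; y^\outter_n\,\bigl(1+x^\outter_{n+1}\bigr),$$
combined with the limit of $y^\outter_n$ computed in (3) below, yields $x^\inner_n\to \frac{a_0}{1+a_0}(1+a_0)=a_0$.

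For (3), I write $y^\outter_n=x^\outter_n/(1+x^\outter_n)\to a_0/(1+a_0)$. To verify this equals $\sqrt{a_0/\vol}$, I square both sides; the resulting quadratic $a_0^2+(2-\vol)a_0+1=0$ coincides with \eqref{eqn:theequation} exactly when $\vol=\per^2/\vol$, i.e.\ $\per=\vol$. Each of the six moment polygons in Figure~\ref{fig:twelve} is reflexive, so by Pick's theorem $\vol=2I+B-2=B=\per$ (using $I=1$), and the identity holds uniformly; one can also simply verify $3b-\sum b_i=b^2-\sum b_i^2$ case-by-case from Definition~\ref{def:finiterational}. The main obstacle I anticipate is not conceptual but bookkeeping: because Lemma~\ref{lemma:identities} splits along several residue classes of $n$ modulo $2$ or $3$, one must keep track of which sub-identity is invoked in each parity case; the payoff is that every sub-identity delivers a strictly positive constant on the right-hand side of the crucial inequality, making (1) uniform across all six examples.
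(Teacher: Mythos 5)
Your proof is correct and follows essentially the same route as the paper's: cross-multiplication reduces (1) to $g(n+1)g(n+J) < g(n)g(n+J+1)$, which is exactly identity $(\varheartsuit)$ (resp.\ $(\varheartsuit.1)$); part (2) splits $g$ into $J$ subsequences satisfying the second-order recurrence with characteristic polynomial $\lambda^2-K\lambda+1$ and applies the Binet formula; and part (3) is the same arithmetic verification using the case-specific fact $\vol = K+2$, which you equivalently phrase as $\per=\vol$. Your explicit positivity argument for the leading Binet coefficient and the Pick's-theorem explanation for $\per=\vol$ are welcome refinements that the paper leaves implicit, and your factorization $x^\inner_n = y^\outter_n(1+x^\outter_{n+1})$ replaces the paper's squeeze via (1), but these are minor variations on the same proof.
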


\begin{proof}
For (1):  Both inequalities boil down to showing that 
$$g(n+1)  g(n+J)  < g(n) g(n+J+1),$$
which follows immediately from the identities ($\varheartsuit$) in Lemma \ref{lemma:identities}.

For (2): In view of (1), it suffices to show that $\displaystyle{\lim_{n\to\infty} x^\outter_n=a_0}$.
The linear recurrence relation 
$$g(n+2J)=K g(n+J)-g(n)$$ 
is of order $2J$ but can be replaced by $J$ linear recurrence relations of order 2, one for each of the $J$ subsequences of $g(n)$ with $n\equiv j$ (mod $J$), for $j=0,1,\ldots, J-1$. Each of these subsequences has the recurrence relation 
\begin{equation}\label{eq:smaller recurrence}
g_j(n+2)=K g_j(n+1)-g_j(n),
\end{equation}
where $g_j(n)=g(J n+j)$.

We can get a closed form for $g_j(n)$ by solving the polynomial equation $\lambda^2=K\lambda-1$. Let $\lambda_1, \lambda_2$ be the roots of this equation.  We note that in each of the cases we are considering we have $\lambda_1>1>\lambda_2>0$. Then for appropriate coefficients $D_j,E_j$ depending on the seed of the sequences, 
\begin{equation}\label{eq:closed form}
g_j(n)=D_j \lambda_1^n+E_j \lambda_2^n.
\end{equation}

For each $j=0,1,\ldots,J-1$ we have
$$\lim_{n\to\infty}\frac{g_j(n+1)}{g_j(n)}=\lim_{n\to\infty}\frac{D_j \lambda_1^{n+1}+E_j \lambda_2^{n+1}}{D_j \lambda_1^n+E_j \lambda_2^n} =\lambda_1.$$
Noting that $a_0$ is exactly $\lambda_1$, the larger solution of $\lambda^2-K\lambda+1=0$, we conclude as desired that $\lim_{n\to\infty}x_n=\lambda_1=a_0.$

Finally, for (3): In view of the fact that  $y^\outter_n = y^\inner_n$, it suffices to show that 
$$\lim_{n\to\infty} y^\outter_n=\sqrt\frac{a_0}{\vol}.$$ 
Indeed we have
$$\lim_{n\to\infty}\frac{1}{y^\outter_n}=\lim_{n\to\infty}\left(\frac{1}{x^\outter_n}+1\right)=\frac{1}{a_0}+1=\sqrt\frac{\vol}{a_0},$$
the last equality uses the facts that $a_0^2-K a_0+1=0$ and $\vol=K+2$.
This completes the proof.
\end{proof}

Next, we show that at the outer corners, the ellipsoid embedding function is indeed obstructed in all of our
six examples.

\begin{proposition}\label{prop:outer}
Let $X$ be a convex toric domain with negative weight expansion $(b;b_1,\ldots,b_n)$ is
$$\textstyle{
(3)\ , \ (3;1)\ , \ (3;1,1)\ , \ (3;1,1,1)\ , \ (3;1,1,1,1)\ , \mbox{or } (4;2,2).
}
$$
For each $(x^\outter_n,y^\outter_n)= \left( \frac{g(n+J)}{g(n)},\frac{g(n+J)}{g(n)+g(n+J)} \right)$, we have the inequality $y^\outter_n\leq c_X(x^\outter_n)$.
\end{proposition}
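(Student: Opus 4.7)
The plan is to exhibit, for each $n$, an explicit ECH capacity obstruction. By equation~\eqref{def:csup}, $c_X(a) = \sup_k c_k(E(1,a))/c_k(X)$, so it suffices to find an index $k_n$ together with a convex lattice path $\Lambda_n$ satisfying $\mathcal{L}(\Lambda_n) = k_n + 1$ and
\begin{equation*}
\frac{c_{k_n}(E(1,x^\outter_n))}{\ell_\Omega(\Lambda_n)} \;\geq\; y^\outter_n,
\end{equation*}
since Theorem~\ref{thm:lattice path} then gives $c_{k_n}(X) \leq \ell_\Omega(\Lambda_n)$, hence $c_X(x^\outter_n) \geq c_{k_n}(E(1,x^\outter_n))/c_{k_n}(X) \geq y^\outter_n$. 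This is the natural ``lattice path obstruction'' strategy foreshadowed by the definition of the $\Lambda_n$ families in Appendix~\ref{ap:convex lattice paths}, and following Hutchings' observation after Theorem~\ref{thm:lattice path}, I would restrict attention to paths whose edge directions are parallel to edges of $\Omega$.

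First I would define $\Lambda_n$ recursively in $n$, with multiplicities of each edge direction given by terms of the sequence $g(\cdot)$; the choice should be guided by the two natural parameters $g(n)$ and $g(n+J)$ appearing in $x^\outter_n$ and $y^\outter_n$. The expected candidate is a path with $\mathcal{L}(\Lambda_n) = k_n + 1$ where $k_n$ is a specific index (of order $g(n)\cdot g(n+J)$) into the ECH spectrum of $E(1,x^\outter_n)$ at which the value $k_n$-th smallest element of $\{i + j\, x^\outter_n\}_{i,j \geq 0}$ can be computed exactly. Next I would compute $\ell_\Omega(\Lambda_n)$ using the edge-length formula~\eqref{eq:Omega length}, where the auxiliary boundary points $p_{\Omega,\nu}$ are the vertices of $\Omega$ opposite each edge direction; the computation should telescope into the combinatorial identities $(\clubsuit)$, $(\vardiamondsuit)$, $(\varheartsuit)$ from Lemma~\ref{lemma:identities}, producing the denominator $g(n) + g(n+J)$ (times the appropriate factor depending on $\Omega$).

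Then I would compute $c_{k_n}(E(1,x^\outter_n))$ by locating the $(k_n+1)$-st entry of the sequence $N(1, x^\outter_n)$. Writing $x^\outter_n = p/q$ with $p=g(n+J)$, $q=g(n)$ (which are coprime by the identities $(\varheartsuit)$), the ordering of the lattice sums $i + j p/q$ is dictated by $qi + pj$, and the specific $k_n$ should be chosen so that this $(k_n+1)$-st entry equals $g(n+J)$. That this value has the form $g(n+J)$ is what makes the ratio collapse to $y^\outter_n = g(n+J)/(g(n)+g(n+J))$. Finally I would verify the target inequality, case-checking the six negative weight expansions using the tables in Lemma~\ref{lemma:identities}, or better, via a uniform inductive argument that handles all six simultaneously (distinguishing $J=2$ from $J=3$ via the two variants of $(\clubsuit)$).

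The main obstacle is pinning down the correct $\Lambda_n$ and $k_n$: the path must at once have its $\Omega$-length hit exactly $g(n)+g(n+J)$ (up to normalization) and enclose exactly the right number of lattice points to match an index $k_n$ at which the ellipsoid's ECH capacity is $g(n+J)$. The bookkeeping is subtle in the $J=3$ cases, where the edges of the moment polygon have more varied directions and the recurrence has a three-step alternation; here the modular-arithmetic split in $(\clubsuit)$ will force slightly different $\Lambda_n$'s in each congruence class of $n$ modulo $3$. Once these combinatorial identities are in hand, the ratio simplifies and gives $y^\outter_n$ on the nose, completing the obstruction.
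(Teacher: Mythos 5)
Your proposal matches the paper's proof in both structure and key technical devices: the paper chooses $k_n = \frac{(g(n)+1)(g(n+J)+1)}{2}-1$, uses Proposition~\ref{facts about sequences} to show $N(1,x^\outter_n)_{k_n} \ge g(n+J)$, and exhibits (in Appendix~\ref{ap:convex lattice paths}) recursively defined lattice paths $\Lambda_n$ with $\mathcal{L}(\Lambda_n)=k_n+1$ and $\ell_\Omega(\Lambda_n)=g(n)+g(n+J)$, verified case-by-case using the identities in Lemma~\ref{lemma:identities}. Two minor refinements: you only need $c_{k_n}(E(1,x^\outter_n)) \geq g(n+J)$, not equality (though equality in fact holds since $g(n)g(n+J)$ is itself a term of $N(g(n),g(n+J))$); and coprimality of $g(n)$ and $g(n+J)$ is not actually needed, since $N(\lambda a,\lambda b)=\lambda N(a,b)$ lets you rescale freely.
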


\begin{proof}
Recall that $N(a,b)_k$ denotes the $k^{\mathrm{th}}$ ECH capacity of $E(a,b)$ and $c_k(X)$ denotes the 
 $k^{\mathrm{th}}$ ECH capacity of $X$.
Define $k_n:=\frac{(g(n)+1)(g(n+J)+1)}{2}-1$. If we prove that 
\begin{equation}\label{eq: things work out with the right kn}
g(n+J)\leq N(1,x^\outter_n)_{k_n} \,\,\,\,\mbox{ and }\,\,\,\, g(n)+g(n+J)\geq c_{k_n}(X),
\end{equation}
then we have the desired inequality:
$$
\textstyle{y^\outter_n=\frac{g(n+J)}{g(n)+g(n+J)}\leq \frac{N(1,x^\outter_n)_{k_n}}{c_{k_n}(X)}\leq\sup_k\frac{N(1,x^\outter_n)_{k}}{c_k(X)}=c_X(x^\outter_n).}$$
The first part of \eqref{eq: things work out with the right kn} can be rewritten as
$$g(n) g(n+J)\leq N(g(n),g(n+J))_{k_n},$$
and by Proposition \ref{facts about sequences} it is indeed true that there are at most $k_n$ terms of the sequence $N(g(n),g(n+J))$ strictly smaller than $g(n) g(n+J)$. 

Next, we tackle the second part of  \eqref{eq: things work out with the right kn}:
$$ g(n)+g(n+J)\geq c_{k_n}(X).$$
By Theorem \ref{thm:lattice path}, it suffices to find a convex lattice path $\Lambda_n$ that encloses $k_n+1$ lattice points and has $\Omega$-length   $g(n)+g(n+J)$:
\begin{equation}\label{eq:points and length} 
\textstyle{\mathcal{L}(\Lambda_n)= \frac{(g(n)+1)(g(n+J)+1)}{2} \,\,\,\,\mbox{and}\,\,\,\, \ell_\Omega(\Lambda_n)=g(n)+g(n+J).}
\end{equation}
We do this separately for each of the six cases under consideration in Appendix~\ref{ap:convex lattice paths}.

The convex lattice paths $\Lambda_n$ for each case (and sub-case) can be found in Figure~\ref{fig:paths}, while the formul\ae\ for $s_n$ and $t_n$ are provided in \eqref{eq:sn and tn}. Using the identities $(\vardiamondsuit)$ in Lemma \ref{lemma:identities} and induction we conclude that $s_n$ and $t_n$ are indeed integers.

Formul\ae\ for the number of lattice points $\mathcal{L}(\Lambda_n)$ enclosed by the path and its $\Omega$-length 
$\ell_\Omega(\Lambda_n)$ are provided in Table~\ref{tab:calL ell} for each case and sub-case. 
As discussed after Table~\ref{tab:calL ell}, it is a computational check that these give the correct numbers
that satisfy \eqref{eq:points and length}.
\end{proof}

Next, we show that at the inner corners, there are explicit ellipsoid embeddings 
realizing the purported value of the ellipsoid embedding function. We do this by 
exploring recursive families of ATFs, following a suggestion of Casals. The idea 
that the recurrence sequences involved in the coordinates of the corners of the 
infinite staircases may be related to the Markov-type equations that show up 
when performing ATF mutations was first mentioned to us by Smith and is studied in detail by 
Maw for symplectic del Pezzo surfaces \cite{maw-thesis}.
This procedure is explained nicely in Evans' lecture notes \cite[Example~5.2.4]{evans}.
We use a series of mutations first described by Vianna \cite[\S 3]{vianna}
on the compact manifolds corresponding to our negative weight expansions with $J=2$.  
The $J=3$ ATFs have base diagram a quadrilateral and do not
seem to have been explicitly used before, though Vianna has introduced quadrilateral based
ATFs  in \cite[Figs~7 and 8]{vianna}.  
It could be interesting to explore the number theory and exotic Lagrangian tori
that these produce.
In algebraic geometry (and looking at the dual lattice), one can also study a related operation
also called mutation, which is
a combinatorial operation arising from the theory of cluster algebras.  This is explored in
\cite{KW}; in particular see Example 1.2  and references therein.

\begin{proposition}\label{prop:inner}
Let $X$ be a convex toric domain whose negative weight expansion is 
$$
(3)\ , \ (3;1)\ , \ (3;1,1)\ , \ (3;1,1,1)\ , \ (3;1,1,1,1)\ , \mbox{or } (4;2,2).
$$
For each $(x^\inner_n,y^\inner_n)=\left( \frac{g(n+J)\left( g(n+1)+g(n+1+J) \right)}{\left(g(n)+g(n+J)\right) g(n+1)},  \frac{g(n+J)}{g(n)+g(n+J)}  \right)$, there is a symplectic embedding 
\begin{equation}\label{eq:inner embed}
E(1,x^\inner_n)\embeds y^\inner_n X,
\end{equation} 
which forces $c_X(x^\inner_n)\leq y^\inner_n$.
\end{proposition}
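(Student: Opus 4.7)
The plan is to produce the required embeddings by constructing, for each of the six target families, a recursive sequence of almost toric fibrations (ATFs) whose base diagrams contain triangles of the correct proportions. This is the strategy suggested by Casals and implemented using mutations of the type studied by Vianna. As a first step I would reduce to the compact setting: for the five negative weight expansions other than $(3;1,1,1,1)$, Theorem~\ref{prop:mainproposition} together with Remark~\ref{rmk:s2s2} in the case $(4;2,2)$ reduces the existence of the embedding \eqref{eq:inner embed} to the existence of an embedding $E(1,x^\inner_n)\embeds y^\inner_n M$ into the closed toric symplectic four-manifold $M$ whose Delzant polygon is the moment image of $X$. For $(3;1,1,1,1)$, I would use Remark~\ref{rem:polygon} to identify the target with the equilateral pentagon space $\mathscr{P}o\ell(1,1,1,1,1)$ and reduce in the same way.

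On each such $M$, the next step is to build a recursive family of ATFs starting from the standard Delzant polygon. I would perform nodal trades at each corner corresponding to a $b_i$ in the negative weight expansion, plus at the vertex opposite the origin, and then iterate a mutation pattern. Explicit seeds for these families are recorded in Appendix~\ref{ap:atfs}. The crucial mechanism is that each mutation is an $ASL_2(\Z)$ transformation applied to one side of a nodal ray, and iterating this in the reflexive polygon setting forces the lattice lengths of the edges of the successive base diagrams to satisfy a Markov-type two-step recurrence. A direct computation identifies this recurrence with $g(n+2J)=Kg(n+J)-g(n)$ from Table~\ref{table:recurrence}, with the correct seeds. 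At the $n^\text{th}$ stage the extremal vertex of the diagram opposite the origin will have coordinates expressible via $g(n)$, $g(n+1)$, $g(n+J)$, and $g(n+J+1)$.

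Having built the ATF, I would then slide all remaining nodes into small neighborhoods of vertices different from the origin and invoke Proposition~\ref{prop:triangleintothing} to extract the desired ellipsoid. In the $J=2$ cases the base diagram is (up to $AGL_2(\Z)$) a triangle, so Proposition~\ref{prop:triangleintothing} produces an embedding of an ellipsoid $E(a_n,b_n)$ into $M$ with $b_n/a_n=x^\inner_n$ and $a_n=y^\inner_n$; identity $(\clubsuit)$ of Lemma~\ref{lemma:identities} is exactly what makes the aspect ratio and scale come out to $(1,x^\inner_n)$ and $y^\inner_n$ respectively. In the $J=3$ cases the base diagram is a quadrilateral, and Proposition~\ref{prop:triangleintothing} is applied to the biggest triangle contained in the quadrilateral that is anchored at the origin; this gives a slightly smaller ellipsoid than would fully fill, explaining the non-touching behavior noted in Remark~\ref{rem: all cases}. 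Once the embedding \eqref{eq:inner embed} is produced, the bound $c_X(x^\inner_n)\leq y^\inner_n$ follows immediately from the definition of $c_X$.

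The main obstacle is the bookkeeping of the final step: verifying that after $n$ mutations, the triangle produced by Proposition~\ref{prop:triangleintothing} has, after rescaling by $y^\inner_n$, precisely legs $(1,x^\inner_n)$. This reduces to the combinatorial identities collected in Lemma~\ref{lemma:identities}, and must be carried out by induction on $n$ within each residue class of $n$ modulo $J$ and for each of the six families separately. The identities $(\varheartsuit)$ and $(\varheartsuit.1)$--$(\varheartsuit.2)$ are what keep the successive quadrilaterals convex (hence mutable), while $(\clubsuit)$ matches the extremal coordinates to $(x^\inner_n,y^\inner_n)$; the identities $(\vardiamondsuit)$ are used to guarantee that the mutation transformations lie in $ASL_2(\Z)$ at every step. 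Apart from this, the only case-by-case subtlety is the $(3;1,1,1,1)$ target, where we must verify that the ATF constructed lives on $\mathscr{P}o\ell(1,1,1,1,1)$ via the bending-flow description of Remark~\ref{rem:polygon}, rather than on a proper toric manifold.
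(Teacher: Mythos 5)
Your proposal follows essentially the same route as the paper's proof: reduce to the compact target via Theorem~\ref{prop:mainproposition} (with Remark~\ref{rem:polygon} handling $(3;1,1,1,1)$), build recursive families of ATF base diagrams by mutation, extract ellipsoids with Proposition~\ref{prop:triangleintothing}, and verify the inductive step with the identities of Lemma~\ref{lemma:identities}. One minor slip: for $(4;2,2)$ the $2\times 2$ square is itself the Delzant polygon of $\C P^1_2\times\C P^1_2$, so Theorem~\ref{prop:mainproposition} applies directly and Remark~\ref{rmk:s2s2} is not needed.
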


 \begin{proof}
We use Theorem~\ref{prop:mainproposition} and Proposition~\ref{prop:triangleintothing} to prove that there is an embedding 
\begin{equation}\label{eq:ctd inner}
E\left(\frac{g(n)+g(n+J)}{g(n+J)}, \frac{g(n+1)+g(n+1+J)}{g(n+1)} \right)\embeds X,
\end{equation}
which is equivalent to \eqref{eq:inner embed}. By definition of $c_X(a)$, this implies the desired inequality. The proof consists of applying successive mutations to base diagrams, beginning with a Delzant polygon.  This allows us to use 
Proposition~\ref{prop:triangleintothing} to find
ellipsoids embedded in compact manifolds.
Theorem~\ref{prop:mainproposition} then allows us to deduce that those ellipsoids must also be embedded in the 
corresponding convex toric domain.
Since two convex toric domains with the same negative weight expansions have identical ellipsoid embedding functions (see Remark \ref{rmk:dependsonly}), it suffices to exhibit the embeddings for one convex toric domain per negative weight expansion. 
We must take particular care with the negative weight expansion $(3;1,1,1,1)$, making use of Lemma~\ref{rem:polygon}.

We begin by producing ATFs
on the compact manifolds $M$ corresponding to our negative weight expansions.
The manifolds are
$$
\C P^2_3\ ; \  \C P^2_3\# \overline{\C P}^2_1\ ; \  \C P^2_3\# 2\overline{\C P}^2_1\ ; \  \C P^2_3\# 3\overline{\C P}^2_1\ ;
$$
$$
 \C P^2_3\# 4\overline{\C P}^2_1\ ; \ \mbox{and } \C P^1_2\times \C P^1_2.
$$
Except for $ \C P^2_3\# 4\overline{\C P}^2_1$, these manifolds may be endowed 
with toric actions.  The corresponding Delzant polygons are displayed in Figure~\ref{fig:delzant}.
Our first step is to apply mutations to the Delzant polygons to produce a base diagram
that is a triangle with two nodal rays when $J=2$ and a 
quadrilateral with three nodal rays when $J=3$.   For $\C P^2_3\# 4\overline{\C P}^2_1$, we use
Vianna's trick \cite[\S3.2]{vianna} to find an appropriate ATF on this manifold.  Specifically, we begin with 
the ATF on $\C P^2_3\# 3\overline{\C P}^2_1$ given in Figure~\ref{fig:3-1-1-1-pregame}(e).  This ATF
has a smooth toric corner at the origin where we may perform a toric blowup of symplectic size $1$, resulting
in an ATF on $\C P^2_3\# 4\overline{\C P}^2_1$.  These initial maneuvers are described in Appendix~\ref{ap:atfs}
and the results are shown in Figure~\ref{fig:pregame}.

\begin{center}
  \begin{figure}[ht]
\begin{overpic}[%grid,
scale=0.5,unit=1mm]{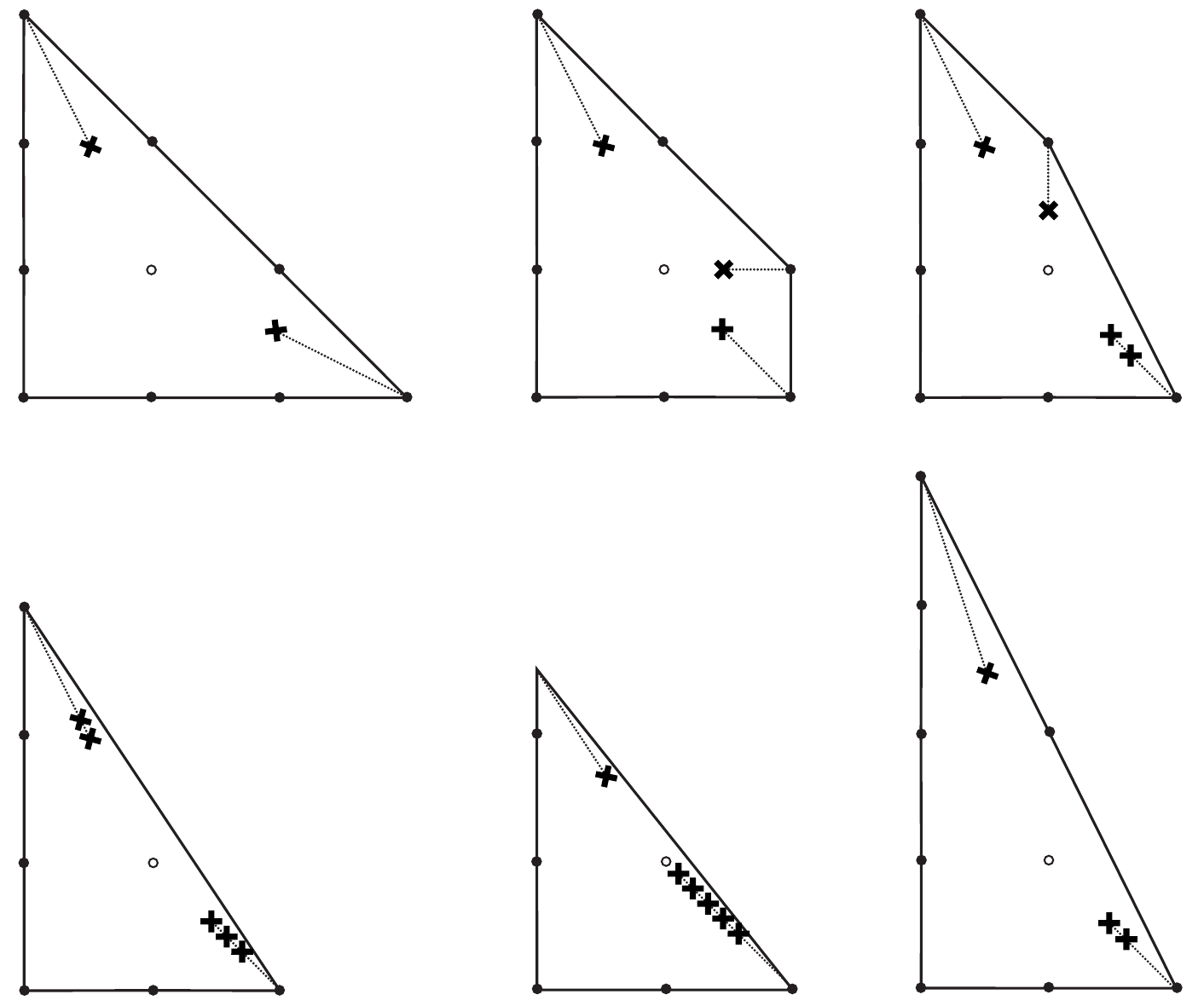}
   \put(18,55){\tiny{$\C P^2_3$}}
   \put(60,55){\tiny{$\C P^2_3\# \overline{\C P}^2_1$}}
   \put(97,55){\tiny{$\C P^2_3\# 2\overline{\C P}^2_1$}}
   \put(8,-4){\tiny{$\C P^2_3\# 3\overline{\C P}^2_1$}}
   \put(60,-4){\tiny{$\C P^2_3\# 4\overline{\C P}^2_1$}}
   \put(97,-4){\tiny{$\C P^1_2\times \C P^1_2$}}
\end{overpic}
\vskip 0.15in

\caption{The base diagrams for ATFs on our manifolds.  These
are a triangle with two nodal rays when $J=2$ and a 
quadrilateral with three nodal rays when $J=3$.}
\label{fig:pregame}
\end{figure}
\end{center}

We now want to show that for any $0<\varepsilon<1$,
\begin{equation}\label{eq:atf inner}
\textstyle{
(1-\varepsilon)\cdot E\left(\frac{g(n)+g(n+J)}{g(n+J)}, \frac{g(n+1)+g(n+1+J)}{g(n+1)} \right)\embeds M,
}
\end{equation}
where $M$ is the compact manifold from our list.
We achieve this by showing that the base diagram obtained at each additional mutation 
contains the triangle with vertices 
$$\textstyle{
(0,0),\ 
\left(\frac{g(n)+g(n+J)}{g(n+J)},0\right), \mbox{ and } \left(0,\frac{g(n+1)+g(n+1+J)}{g(n+1)} \right). 
}
$$
We will proceed by induction.   We will include the details for the first few steps in
two worked examples in the case $J=2$ in Example~\ref{ex:worked1} and $J=3$ in Example~\ref{ex:worked2}.

In Table~\ref{tab:sigma}, we record the additional data we will need for our recursive 
mutation procedure.

\renewcommand{\arraystretch}{1.5}\begin{table}[h]
\begin{tabular}{| c || c | c | } 
\hline

Negative weight expansion & $J$ & $\sigma_n$
\\ \hline \hline
$(3)$     & $2$ & $1$
\\  \hline
$(3;1,1,1)$  & 2 & $\Bigg\{\begin{array}{ll}2,& n \text{ odd}\\ 3,& n \text{ even} \end{array}$ 
\\   \hline
$(3;1,1,1,1)$   & $2$ & $\Bigg\{\begin{array}{ll}1,& n \text{ odd}\\ 5,& n \text{ even} \end{array}$  
\\  \hline
$(3;1)$   & $3$ & 1
\\  \hline
$(3;1,1)$  & $3$ &   $ \Bigg\{\begin{array}{ll} 2,& n\equiv 0 \mod 3 \\1, & n\equiv 1,2 \mod 3\end{array}$  
\\  \hline$(4;2,2)$ & $2$ & $ \Bigg\{\begin{array}{ll}1,& n \text{ odd}\\ 2,& n \text{ even} \end{array}$    
 \\   \hline
\end{tabular}
\caption{Additional data, by negative weight expansion.}\label{tab:sigma}
\end{table}

\newpage

We now treat separately the cases where $J=2$ and $J=3$, 
starting with $J=2$.  In this case, starting with base diagrams in Figure~\ref{fig:pregame} and
continuing to apply mutations, all further base diagrams will be triangles.
Following the notation in Figure~\ref{fig:trianglen}(a), 
we will repeatedly apply mutation at the vertex
on the positive $y$-axis, along the nodal
ray $v_n$ emanating from that vertex.
The induction hypothesis is that the triangle $\Delta_n$ has side lengths $a_n,b_n,c_n$, nodal rays $v_n$ and 
$u_n$, hypotenuse direction vector $w_n$ as shown in Figure~\ref{fig:trianglen}(a), and  matrix 
that takes $\Delta_n$ to $\Delta_{n+1}$ is $M_n$:
$$v_n=\begin{psmallmatrix} g(n+1)\\-g(n+3) \end{psmallmatrix},\,\,\, 
u_n=\begin{psmallmatrix} -g(n)\\g(n+2) \end{psmallmatrix},\,\,\,
w_n=\begin{psmallmatrix}\sigma_{n+1}g(n+1)^2\\ -\sigma_{n+2}g(n+2)^2 \end{psmallmatrix},$$
$$ a_n=\frac{g(n+1)+g(n+3)}{g(n+1)},\,\,\,b_n=\frac{g(n)+g(n+2)}{g(n+2)},$$
$$\mbox{and }M_n=\begin{pmatrix} -\sigma_{n+2}\,g(n+2)^2 &  -\sigma_{n+1}\,g(n+1)^2   \\ \sigma_{n+3}\,g(n+3)^2&2+\sigma_{n+2}\,g(n+2)^2\end{pmatrix},$$
where $\sigma_n$ is as in Table~\ref{tab:sigma}. 

\newpage 

\begin{center}
  \begin{figure}[ht]
\begin{overpic}[%grid,
scale=0.9,unit=1mm]{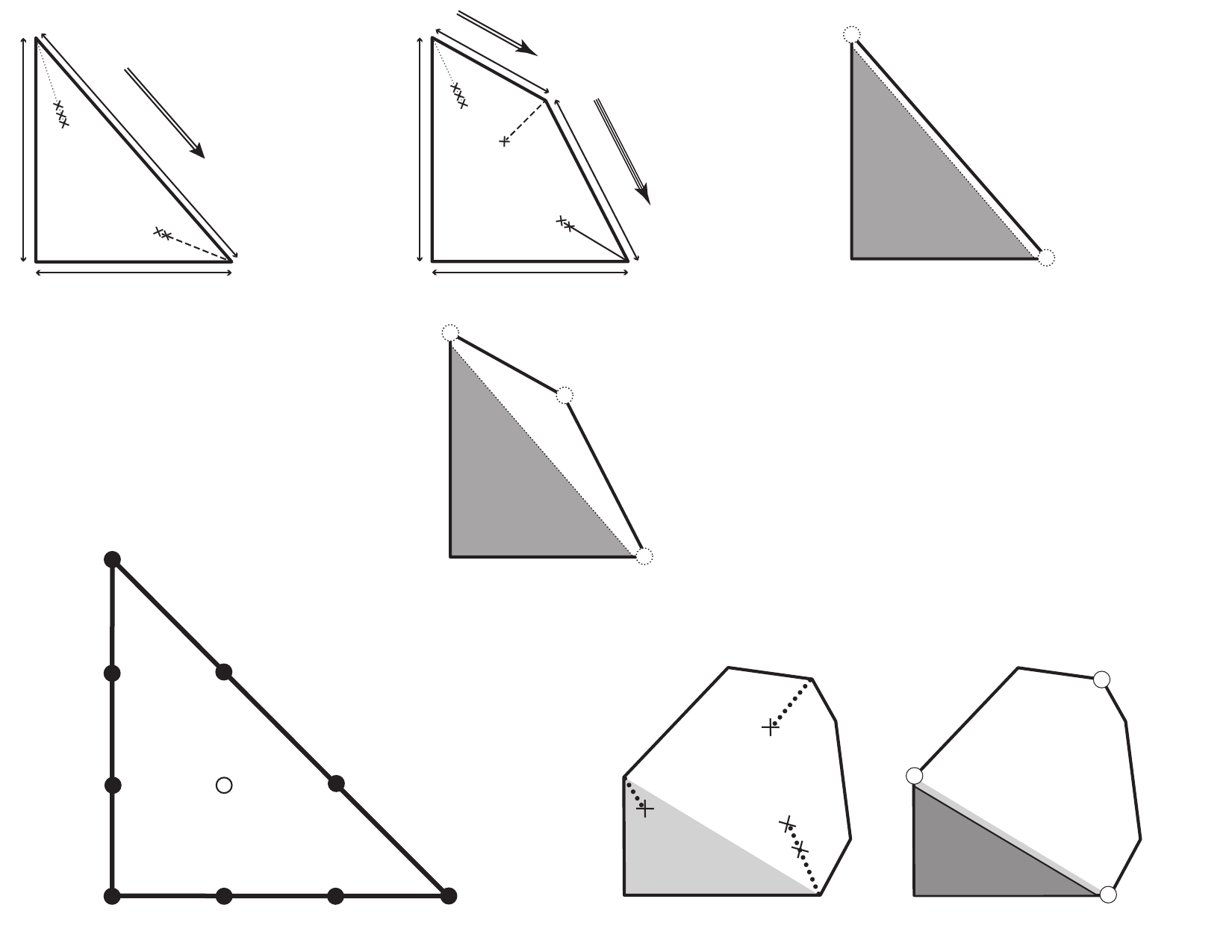}
   \put(11,28){{\small{$v_n$}}}
   \put(23,11){{\small{$u_n$}}}
   \put(31,36){{\small{$w_n$}}}
   \put(-3,25){\small{$a_n$}}
   \put(23,-2){\small{$b_n$}}
   \put(26,28){\small{$c_n$}}
   \put(22,-9){(a)}
\end{overpic}
\hskip 0.75in
\begin{overpic}[%grid,
scale=0.9,unit=1mm]{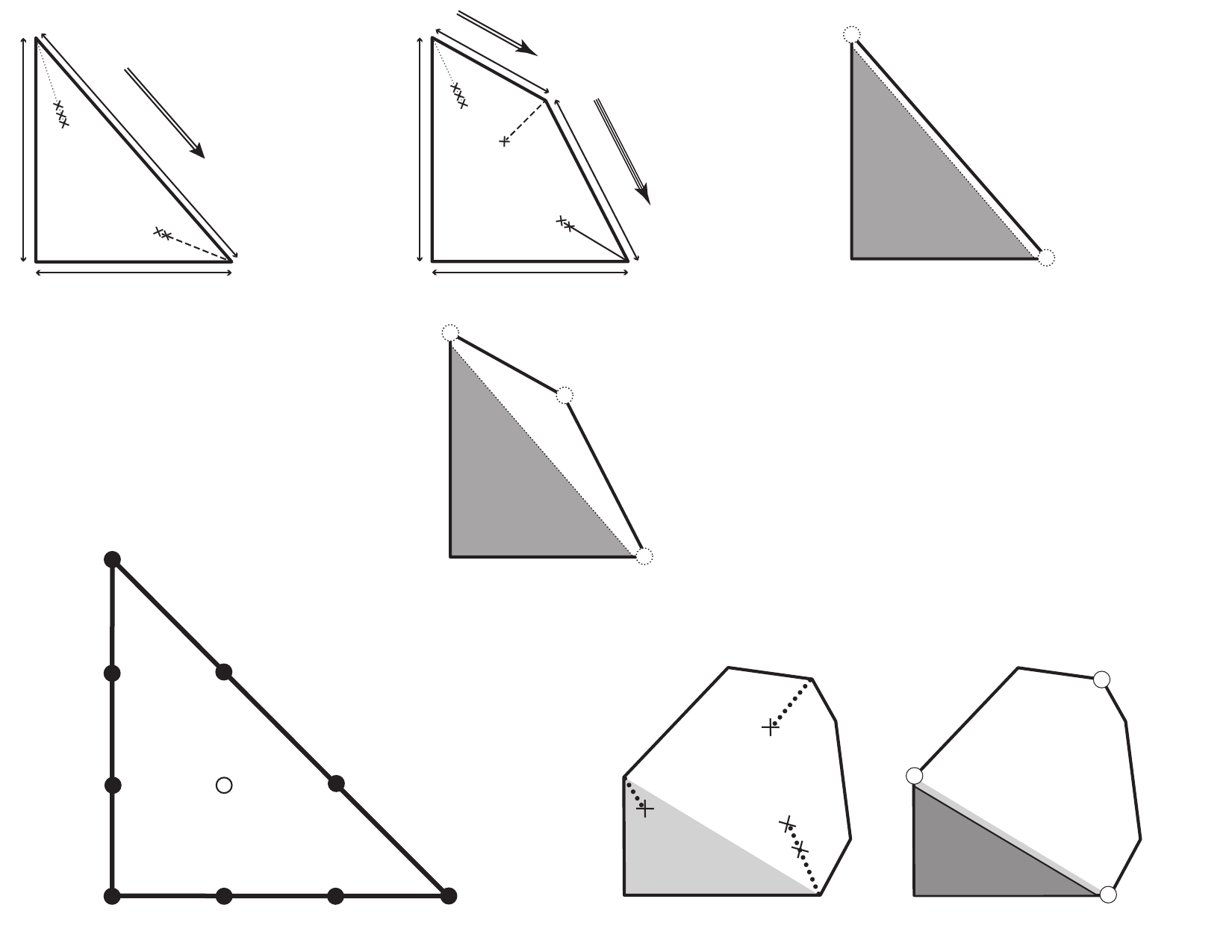}
   \put(25,13){{\small{$u_n$}}}
   \put(16,25){{\small{$v_n$}}}
   \put(11,32){{\small{$w_n$}}}
      \put(45,28){{\small{$s_n$}}}
      \put(18,52){{\small{$r_n$}}}
   \put(-3,25){\small{$a_n$}}
   \put(23,-2){\small{$b_n$}}
   \put(38,23){\small{$c_n$}}
   \put(15,46.5){\small{$d_n$}}
   \put(22,-9){(b)}
\end{overpic} \vskip 0.3in
\caption{The general base diagrams  (a)  $\Delta_n$ for the cases when $J=2$; and (b) $\square_n$ 
for the cases when $J=3$.  The mutation from $\Delta_n$ (respectively $\square_n$) to $\Delta_{n+1}$ 
(resp.\ $\square_{n+1}$) 
takes place at the vertex on the positive $y$-axis
along the nodal ray $v_n$ (resp.\ $w_n$).}
\label{fig:trianglen}
\end{figure}
\end{center}

The base case is immediate from Figure~\ref{fig:pregame}.
For the induction step, we must check first that the matrix $M_n$ is indeed performing the mutation from 
$\Delta_n$ to $\Delta_{n+1}$, that is: \begin{enumerate}
\item $M_n v_n=v_n$ , 
\item $M_nw_n=\begin{psmallmatrix} 0\\1 \end{psmallmatrix}$ , and
\item$\det(M_n)=1$.
\end{enumerate}
We must also check that this transformation gives rise to the new data of $\Delta_{n+1}$:
\begin{enumerate}  \setcounter{enumi}{3}
\item $w_{n+1}=M_n \begin{psmallmatrix} -1\\0 \end{psmallmatrix}$ ,
\item $v_{n+1}=M_n u_n$ ,
\item $u_{n+1}=-v_n$ ,
\item $a_{n+1}=a_n+c_n$ ,
\item $b_{n+1}=a_n\, \frac{1^\text{st} \text{ entry of } v_n }{2^\text{nd} \text{ entry of } v_n }$ , and
\item $c_{n+1}=b_n-b_{n+1}$.
\end{enumerate}

The proof of these uses the identities in Lemma \ref{lemma:identities}. Finally, we note that at each step,
the base diagram $\Delta_n$ is exactly the triangle with vertices $(0,0)$,  
$$
\textstyle{
(\frac{g(n)+g(n+2)}{g(n+2)},0)=(b_n,0), \mbox{ and }
(0,\frac{g(n+1)+g(n+3)}{g(n+1)})=(0,a_n),}
$$
which is what we wanted to prove.

Next we tackle the $J=3$ case. Here, the base diagram never becomes a triangle, instead it is always a quadrilateral. 
Following the notation in Figure~\ref{fig:trianglen}(b), 
we will repeatedly apply mutation at the vertex
on the positive $y$-axis, along the nodal
ray $w_n$ emanating from that vertex.
The formulas below give the relevant data of the base diagram $\square_n$:

$$u_n=\begin{psmallmatrix} -g(n)\\g(n+3) \end{psmallmatrix},\,\,\,
w_n=\begin{psmallmatrix} g(n+1)\\-g(n+4) \end{psmallmatrix},
$$

$$
s_n=\begin{psmallmatrix}\sigma_{n+1}g(n+1)^2\\ 1-\sigma_{n+1}g(n+1)g(n+4) \end{psmallmatrix},\,\,\,
r_n=\begin{psmallmatrix}\sigma_{n}g(n)g(n+3)-1\\ -\sigma_{n+3}g(n+3)^2 \end{psmallmatrix},$$

$$ a_n=\frac{g(n+1)+g(n+4)}{g(n+1)},\,\,\,b_n=\frac{g(n)+g(n+3)}{g(n+3)},$$

\noindent and

$$M_n=\begin{pmatrix} 1-\sigma_{n+1}\,g(n+1)g(n+4) &  -\sigma_{n+1}\,g(n+1)^2   \\ \sigma_{n+4}\,g(n+4)^2&1+\sigma_{n+1}\,g(n+1)g(n+4)\end{pmatrix},$$
where $\sigma_n$ is again as in Table~\ref{tab:sigma}. 

Performing a mutation on $\square_n$ uses the matrix $M_n$ and yields $\square_{n+1}$. The matrix $M_n$ satisfies
\begin{enumerate}
\item $M_n w_n=w_n$
\item $M_n r_n=\begin{psmallmatrix} 0\\1 \end{psmallmatrix}$ 
\item$\det(M_n)=1$
\end{enumerate}
and the data for $\square_{n+1}$ is obtained via
\begin{enumerate}  \setcounter{enumi}{3}
\item $r_{n+1}=M_n s_n$
\item $s_{n+1}=M_n \begin{psmallmatrix} -1\\0 \end{psmallmatrix}$
\item $v_{n+1}=M_n u_n$ and $w_{n+1}=M_n v_n$; thus $w_{n+1}=M_n M_{n-1}u_{n-1}$
\item $u_{n+1}=-w_n$
\item $a_{n+1}=a_n+d_n$
\item $d_{n+1}=c_n$
\item $b_{n+1}=-a_n\, \frac{1^\text{st} \text{ entry of } w_n }{2^\text{nd} \text{ entry of } w_n }$
\item $c_{n+1}=b_n-b_{n+1}$.
\end{enumerate}

The proof of these relations uses the identities in Lemma \ref{lemma:identities}. Finally, we note that the triangle with vertices $(0,0)$, 
$$
\textstyle{
(\frac{g(n)+g(n+3)}{g(n+3)},0)=(b_n,0), \mbox{ and } (0,\frac{g(n+1)+g(n+4)}{g(n+1)})=(0,a_n)
}
$$
fits in the base diagram $\square_n$ for
each $n$, which is what we wanted to prove.

The ATFs described above and Proposition~\ref{prop:triangleintothing} allow us to conclude that we have the 
desired  embeddings \eqref{eq:atf inner} with target the compact manifold $M$.
We now argue that there are also such embeddings with target a convex toric domain.
Suppose that $X$ is the convex toric domain with the same negative weight expansion as $M$.  By Theorem~\ref{prop:mainproposition},
we then have an embedding
$$
(1-\varepsilon)\cdot E\left(\frac{g(n)+g(n+J)}{g(n+J)}, \frac{g(n+1)+g(n+1+J)}{g(n+1)} \right)\embeds X
$$
for every $0<\varepsilon <1$.  Now note that
$$
(1-\varepsilon)\cdot\overline{E(a,b)} \bigsubset[2] \left(1-\frac{\varepsilon}{2}\right)\cdot E(a,b),
$$
so we may conclude that we have symplectic embeddings of the closed ellipsoids
$$
(1-\varepsilon)\cdot \overline{E\left(\frac{g(n)+g(n+J)}{g(n+J)}, \frac{g(n+1)+g(n+1+J)}{g(n+1)} \right)}\embeds X
$$
for every $0<\varepsilon <1$. We may now apply \cite[Cor. 1.6]{dan} to deduce
that there is a symplectic embedding of the form  \eqref{eq:ctd inner}, as desired. 
In fact, we may also apply Theorem~\ref{prop:mainproposition}  one more time to deduce that
$$
E\left(\frac{g(n)+g(n+J)}{g(n+J)}, \frac{g(n+1)+g(n+1+J)}{g(n+1)} \right)\embeds M.
$$
Thus we have shown that there are ellipsoid embeddings representing the purported interior corners.
\end{proof}

\noindent We now include two specific worked examples to illustrate how the ATF recursions operate.

\begin{example}\label{ex:worked1}
We first consider $M=\C P^2_3$, which should
produce the first steps of the Fibonacci staircase.  Using Table~\ref{table:recurrence} with $J=2$, we have
$g(n)=2,1,1,2,5,13$ for the terms $n=0,\dots,5$.  After initial terms, these
are the odd-index Fibonacci numbers. 
The first two mutations produce the sequence of triangles 
shown in Figure~\ref{fig:initCP2}.  We don't include all decorations
and labels for space considerations.  We do include the lengths of
the legs of each triangles, as these determine the proportions of the ellipsoids
that we can thus embed into $\C P^2$ and which show up at the first three interior corners of the staircase: $E(1,1)$, $E(1,4)$, and $E(1,\frac{25}{4})$.

\newpage 

\begin{center}
  \begin{figure}[ht]
\begin{overpic}[%grid,
scale=0.38,unit=1mm]{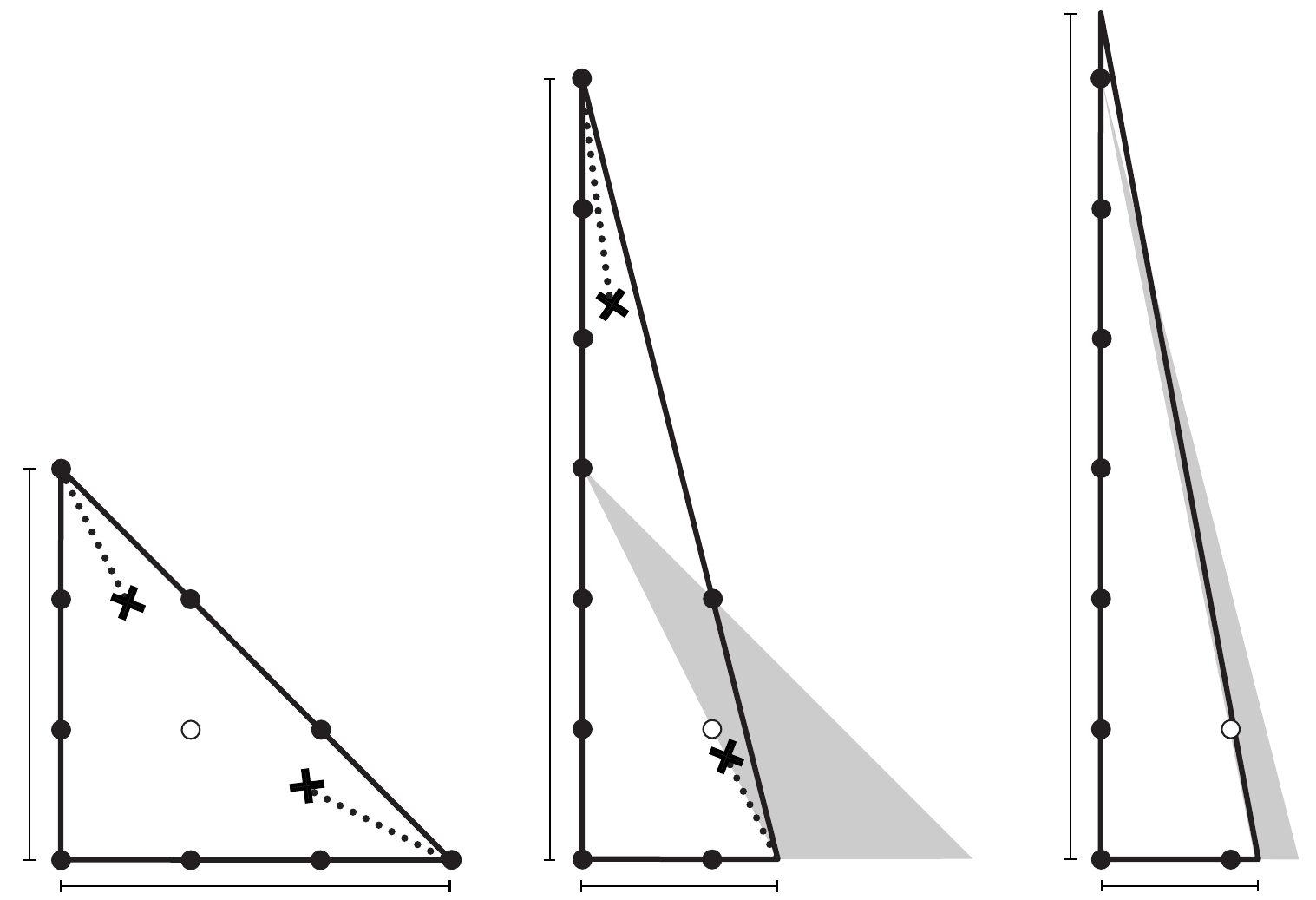}
   \put(-10,16){\small{$a_0=3$}}
   \put(14,-2.5){\small{$b_0=3$}}
   \put(29,33){\small{$a_1=6$}}
   \put(46,-3){\small{$b_1=\frac{3}{2}$}}
   \put(66,39){\small{$a_2=\frac{15}{2}$}}
   \put(82,-3){\small{$b_2=\frac{6}{5}$}}
   \put(18,-9){(a)}
   \put(50,-9){(b)}
   \put(88,-9){(c)}
\end{overpic} \vskip 0.35in
\caption{The first two mutations in the recursive procedure for $\C P^2_3$, starting in (a) from the initial ATF indicated in Figure~\ref{fig:pregame} for $\C P^2_3$.
From (a) to (b), we apply mutation matrix 
$\left(\begin{smallmatrix}
-1&-1\\ \phantom{-}4&\phantom{-}3
  \end{smallmatrix}\right)$ and from (b) to (c), mutation matrix $\left(\begin{smallmatrix}
-4&-1\\25&\phantom{-}6
  \end{smallmatrix}\right)$.
Note that the light gray portion in (b) is the portion of the triangle that has changed in the mutation from (a). That gray portion is not part of the new base diagram,
which is outlined in black.  Likewise, the light gray portion in
(c) is what has changed in mutation from (b).
We do not include nodal rays in (c) because their direction is
too close to the hypotenuse to be visible at this resolution.  Note also that the white lattice point is
in the interior of the polygon. }
\label{fig:initCP2}
\end{figure}
\end{center}
\end{example}

\begin{example}\label{ex:worked2}
We consider $M=\C P^2_3\# \overline{\C P}^2_1$.  Using Table~\ref{table:recurrence} with $J=3$, we have
$g(n)=2,1,1,2,5,13$ for $n=0,\dots,5$.
The first two mutations produce the sequence of quadrilaterals shown in Figure~\ref{fig:initHIRZ}.  We don't include all decorations
and labels for space considerations.  We do include the lengths of the sides sitting on the axes, as these determine the proportions of the ellipsoids
that we can thus embed into $\C P^2\# \overline{\C P}^2$ and which show up at the first three interior corners of the staircase: $E(1,\frac32)$, $E(1,\frac{10}{3})$, and $E(1,\frac{24}{5})$.

\newpage 

\begin{center}
  \begin{figure}[ht]
\begin{overpic}[%grid,
scale=0.44,unit=1mm]{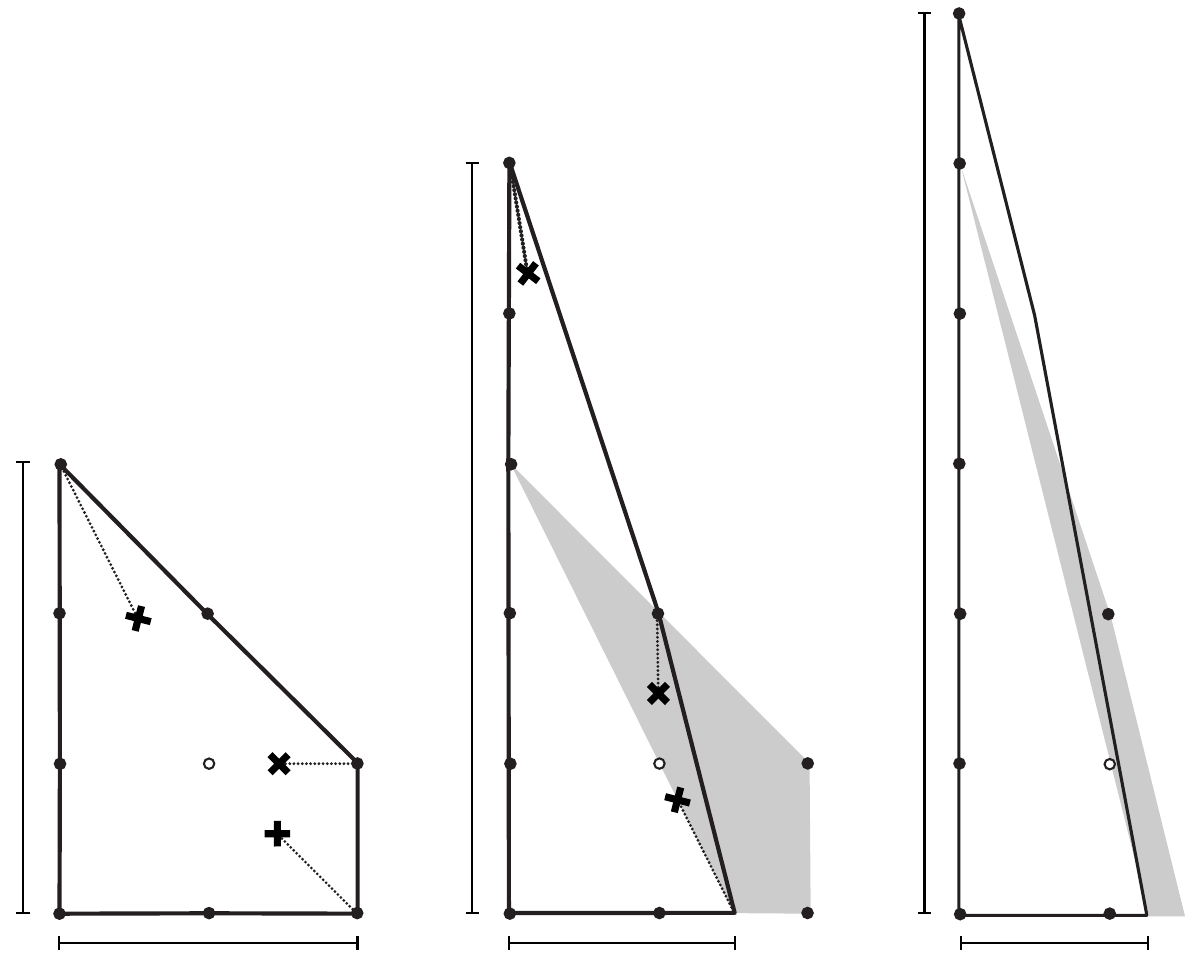}
   \put(-9,20){\small{$a_0=3$}}
   \put(13,-2.5){\small{$b_0=2$}}
   \put(24,30){\small{$a_1=5$}}
   \put(42,-3){\small{$b_1=\frac{3}{2}$}}
   \put(58,38){\small{$a_2=6$}}
   \put(74,-3){\small{$b_2=\frac{5}{4}$}}
   \put(13,-9){(a)}
   \put(46,-9){(b)}
   \put(78,-9){(c)}
\end{overpic} \vskip 0.3in
\caption{The first two mutations in the recursive procedure for 
$\C P^2_3\# \overline{\C P}^2_1$, starting in (a) from the initial ATF 
indicated in Figure~\ref{fig:pregame} for $\C P^2_3\# \overline{\C P}^2_1$.
From (a) to (b), we apply mutation matrix 
$\left(\begin{smallmatrix}
-1&-1\\ \phantom{-}4&\phantom{-}3
  \end{smallmatrix}\right)$ and from (b) to (c), mutation matrix $\left(\begin{smallmatrix}
-3&-1\\16&\phantom{-}5
  \end{smallmatrix}\right)$.
Note that the light gray portion in (b) is the portion of the triangle that has changed in the mutation from (a). That gray portion is not part of the new base diagram,
which is outlined in black.  Likewise, the light gray portion in
(c) is what has changed in mutation from (b).
We do not include nodal rays in (c) because their directions are
too close to the edges of the quadrilateral to be visible at this resolution.  Note also that the white lattice point is in the interior of the polygon. }
\label{fig:initHIRZ}
\end{figure}
\end{center}
\end{example}

We now have all of the ingredients in place to complete the proof that the Fano infinite staircases exist.

\begin{proof}[Proof of Theorem \ref{thm:main}]
We know from Propositions \ref{prop:outer} and \ref{prop:inner} that $y^\outter_n\leq c_X(x^\outter_n)$ and $c_X(x^\inner_n)\leq y^\inner_n$. 

Now we use Proposition \ref{prop:3things}. Since $x^\outter_n < x^\inner_n$ and $y^\outter_n = y^\inner_n$, because $c_X(a)$ is continuous and non-decreasing, it must be constant and equal to $ y^\outter_n$ between $x^\outter_n$ and $x^\inner_n$. Furthermore, since $x^\inner_n<x^\outter_{n+1}$ and the points $(0,0)$,  $(x^\inner_n,y^\inner_n)$ and $(x^\outter_{n+1},y^\outter_{n+1})$ are colinear, the scaling property of $c_X(a)$ implies that between each $x^\inner_n$ and $x^\outter_{n+1}$, the graph of $c_X(a)$ consists of a straight line segment (which extends through the origin). We thus have an infinite staircase in each of the cases studied.

Finally, by continuity and because  $x^\outter_n\to a_0$, $x^\inner_n\to a_0$, and $y^\outter_n=y^\inner_n\to\sqrt{\frac{a_0}{\vol}}$ as $n\to\infty$, we know that the infinite staircase accumulates from the left at
$\left(a_0,c_X(a_0)=\sqrt{\frac{a_0}{\vol}}\right)$, which completes the proof of the theorem.
\end{proof}

\begin{remark}
One must take care to interpret the base diagrams in Figure~\ref{fig:pregame} correctly.
These represent almost toric fibrations on smooth manifolds, not moment map images of toric orbifolds.
\end{remark}

\begin{remark}\label{rmk:the other four}
In this section we have have shown that the toric domains corresponding to the twelve reflexive polygons in Figure \ref{fig:twelve} have infinite staircases. However, there exist exactly sixteen reflexive polygons up to $AGL_2(\Z)$ equivalence: the twelve in 
Figure~\ref{fig:twelve} plus the four in Figure~\ref{fig:the others}, which have reduced blowup vectors 
$(3;1,1,1,1,1)$ and $(3;1,1,1,1,1,1)$.
 
Solving the quadratic equation \eqref{eqn:theequation} for these two negative weight expansions, we obtain that in the first case $a_0 = 1$ and in the second the quadratic equation has no real solutions at all.  By Theorem \ref{satisfies quadratic equation}, the latter case therefore cannot have an infinite staircase.  As for the former case, a straightforward modification of the method in \cite[\S 2.5]{cg ellipsoid} can be used to show that $c_X(a) = \frac{a+1}{4}$ for $a \ge 1$ sufficiently close to $1$.\footnote{\phantom{.}We omit the modified argument for brevity but include here one necessary ingredient, which is the cap function of the target. To rigorously compute it one uses the sequence subtraction formula \eqref{def:seqsum}: the sequence of ECH capacities of X is given by $S_1-S_2$, where $S_1$ is the sequence of ECH capacities of $E(3,3)$ and $S_2$ is the sequence of ECH capacities of $E(1,5)$. The result then simplifies to $\text{cap}_X(T)=\frac{T^2}{8}+\frac{T}{2}+\Gamma_r$ where $T\equiv r \hspace{1 mm} (\mbox{mod }4)$ and $\Gamma_0=1, \Gamma_1=\frac{3}{8},  \Gamma_2=\frac{1}{2}$ and $\Gamma_3=\frac{3}{8}$. The definition of the $\text{cap}_X$ function and a discussion of its significance can be found in \eqref{quasipolynomial} and at the beginning of Section \ref{sec:conj}.}   Thus, infinite staircases do not exist in either of these cases.

\begin{figure}[h!]
\centering
\includegraphics[width=0.75\textwidth]{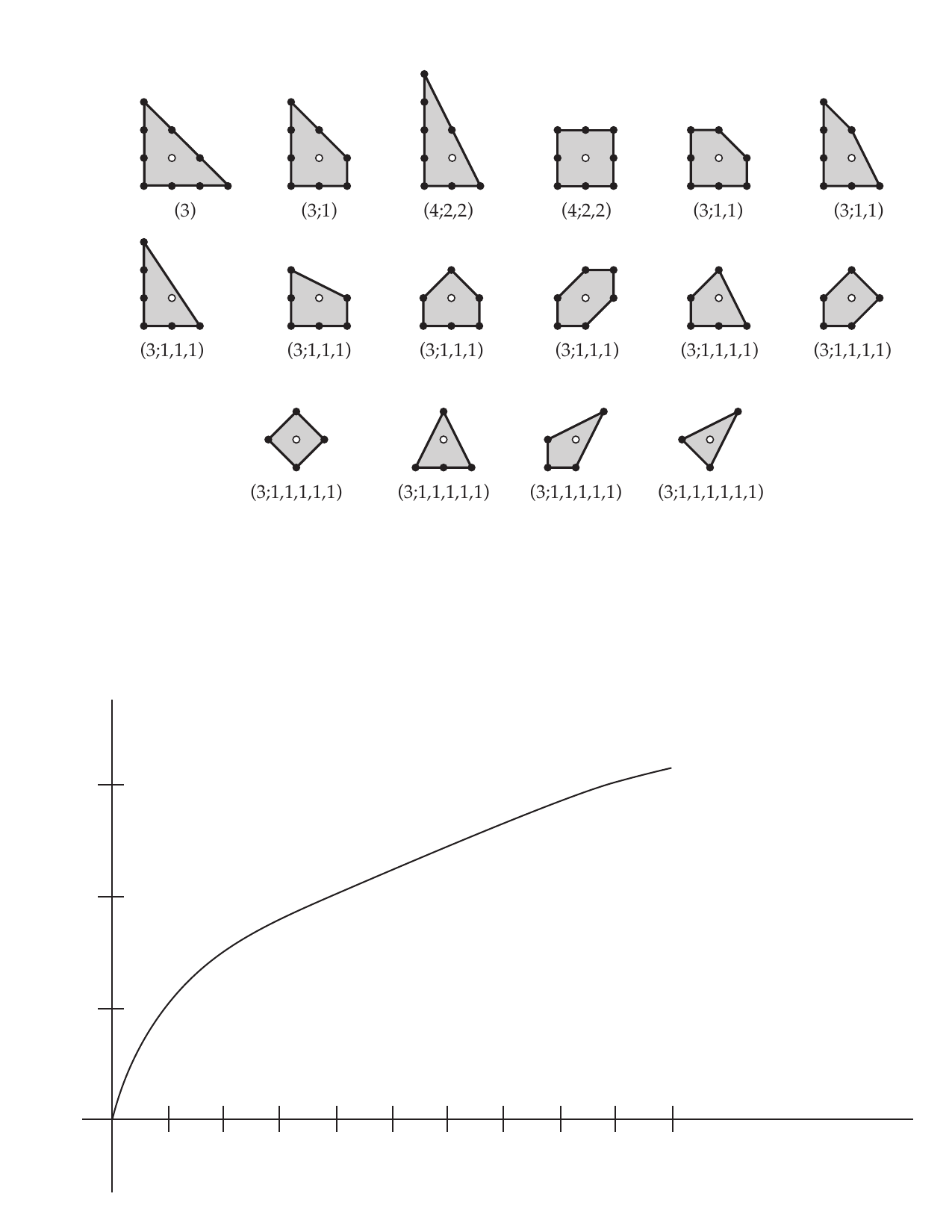}
\caption{The remaining four reflexive polygons.}
\label{fig:the others}
\end{figure}
\end{remark}

\section{Conjecture: why these may be the ``only'' infinite staircases}\label{sec:conj}

In this section we describe some evidence towards Conjecture \ref{conj:reflex}, which
speculates that the only rational convex toric domains that admit an infinite staircase are those whose moment polygon, 
up to scaling, is $AGL_2(\Z)$-equivalent to one in Figure~\ref{fig:twelve}.
In light of \cite{project9,usher}  about infinite staircases for irrational targets,  
it is crucial in the conjecture that  the toric domain be rational.

Let $X_\Omega$ be a rational convex toric domain with negative weight expansion $(b;b_1,b_2,\ldots,b_n)$. The 
ellipsoid embedding function of the scaling of a convex toric domain is a scaling of the ellipsoid embedding function 
of the original domain.  Thus, we may assume that the negative weight expansion of $X_\Omega$ is integral
and primitive: $\gcd(b,b_1,\dots,b_n)=1$. With this scaling, we say that $\Omega$ is {\bf primitive}.

By Theorem \ref{satisfies quadratic equation}, if the ellipsoid embedding function of $X_\Omega$ has an infinite staircase, then $c_{X_\Omega}(a_0)=\sqrt{ \frac{a_0}{\vol} }$. This implies that
$$E(1,a_0)\embeds \sqrt{\frac{a_0}{\vol}} X_\Omega,$$
which by Proposition \ref{prop:embedding} and conformality of ECH capacities is equivalent to an inequality of sequences of ECH capacities:

\begin{equation}\label{eq:ineqECH}
c_{ECH}\left(E\left(\sqrt{\frac{\vol}{a_0}},\sqrt{a_0{\vol}}\right)\right)\leq c_{ECH}(X_\Omega).
\end{equation}
To rewrite this inequality we introduce the \textbf{cap function} of a convex toric domain $X$, for $T\in\mathbb{N}$: 
\begin{equation}\label{quasipolynomial}
\text{cap}_{X}(T) \eqdef \# \lbrace k : c_k(X) \le T \rbrace.
\end{equation}
With $u=\sqrt{\frac{\vol}{a_0}}$ and $ v=\sqrt{a_0\vol}$, the inequality \eqref{eq:ineqECH} is equivalent to:
\begin{equation}\label{eq:ineqErhart}
\text{cap}_{E(u,v)}(T) \ge \text{cap}_{X_\Omega}(T), \mbox{ for all } T\in\mathbb{N}.
\end{equation}

We first look at the right hand side of inequality \eqref{eq:ineqErhart}.
For rational convex toric domains $X_\Omega$ with $\Omega$ primitive, it is possible to argue that $\text{cap}_{X_\Omega}(T)$
has quadratic term $\frac{1}{2\vol}T^2$ and linear term $\frac{\per}{2 \vol}T$. In \cite[Theorem 8]{wormleighton}, Wormleighton 
analyzes the constant term of $\text{cap}_{X_\Omega}(T)$ and shows that since $\Omega$ is a primitive lattice convex toric domain, the cap function of $X_\Omega$ is eventually equal to a quasipolynomial:
\begin{equation}\label{eq:capfcn}
\text{cap}_{X_\Omega}(T)=\frac{1}{2\vol}T^2+\frac{\per}{2 \vol}T+\Gamma_r,
\end{equation}
where $r\in\{0,\ldots,\vol-1\}$ is a congruence class of $T$ (mod $\vol$) and each $\Gamma_r$ is a constant.

Next, we look at the left hand side of inequality \eqref{eq:ineqErhart}.  For a general ellipsoid $E(u,v)$, the cap function equals the Ehrhart function of the triangle $\Delta_{u,v}$, with vertices $(0,0), (1/u,0),$ and $(0,1/v)$:
\begin{eqnarray*}
\text{cap}_{E(u,v)}(T) & = & \# \lbrace k : c_k(E(u,v)) \le T \rbrace)\\
& =& \# \lbrace k : N(u,v)_k \le T \rbrace\\
& =& \# \left( T\Delta_{u,v} \cap \mathbb{Z}^2 \right)\\
& =& \text{ehr}_{\Delta_{u,v}}(T).
\end{eqnarray*}

In the particular case of the ellipsoid in \eqref{eq:ineqErhart}, it is usually the case that $a_0$ is irrational; since we are just explaining heuristic evidence, we assume this for what follows. 

\begin{assumption}\label{ass:irrational}
The accumulation point $a_0$ is irrational.
\end{assumption}
\noindent The case when $a_0$ is rational must be treated separately, and we will not concern ourselves with it here.
With Assumption~\ref{ass:irrational} in place, 
we have that the ratio $\frac v u=a_0$ is irrational, and 
thus we satisfy the conditions of \cite[Lemma 2.1]{cgls}, which allows us to write:
\begin{equation*}\text{cap}_{E(u,v)}(T)=\text{ehr}_{\Delta(u,v)}(T)=\frac{1}{2 u v}T^2+\frac12\left(\frac1u+\frac1v\right)T+d(T),\end{equation*}
where $d(T)$ is asymptotically $o(T)$. It is straightforward to see that
$$\frac{1}{uv}=\frac{1}{\vol}$$
and since $a_0$ satisfies the quadratic equation \eqref{eqn:theequation} we can get
\begin{equation}\label{eq:perovervol}\frac{1}{u}+\frac{1}{v}=\sqrt{\frac{a_0}{\vol}}+\sqrt{\frac{1}{a_0\vol}}=\frac{\per}{\vol}.\end{equation}
Thus
\begin{equation}\label{eq:captriangle}\text{cap}_{E(u,v)}(T)=\frac{1}{2 \vol}T^2+\frac{\per}{2 \vol}T+d(T).\end{equation}

Now, modulo Assumption~\ref{ass:irrational}, if the ellipsoid embedding function of $X_\Omega$ has an infinite staircase, then \eqref{eq:ineqErhart} forces
\begin{equation}\label{eq:shortineqErhart}
d(T)\geq \Gamma_r, \mbox{ for all } T\in\mathbb{N}.
\end{equation}
The converse would  
allow us to rule out the existence of infinite staircases 
by finding one $T$ for which
\begin{equation}
\label{eqn:keyequation} 
d(T)< \Gamma_r.
\end{equation}
Our approach for finding such a $T$ is partially inspired by influential work of Hardy and Littlewood  \cite{HL, HLanother}.

We say that a number $x$ has {\bf continued fraction expansion}
$[a_0,a_1,a_2,\ldots]$ if
$$
x = a_0+ \frac{1}{a_1+\frac{1}{a_2+ \frac{1}{\ddots}}}.
$$
The numbers $a_i$ are called the {\bf partial quotients} of $x$.
Hardy and Littlewood  studied the function $\text{ehr}_{\Delta_{u,v}}(T)$, as $T$ varies 
among the positive real numbers.  
They show in \cite[Theorems A3 and A4]{HL} 
that if $\frac{u}{v}$ has bounded partial quotients, then the term $d(T)$ is $\mathcal{O}(\log(T))$.
Moreover, they show that this is optimal in the sense that there is a positive constant $K$
and examples $\frac{u}{v}$ with bounded partial quotients that satisfy     
\begin{equation}
\label{eqn:hardylittlewood}
d(T_i) > K\cdot \log(T_i)\quad \mbox{ and } \quad d(T_j) < -K\cdot \log(T_j),
\end{equation}
where the $T_i$ and $T_j$ are two increasing sequences of real numbers.
In particular, this implies that one cannot replace $\log(T)$ by any function with
strictly smaller growth.
We will say
that a function is {\bf optimally} $\pm \mathcal{O}(\log(T))$ to signify that its behavior is
$\mathcal{O}(\log(T))$ and it satisfies \eqref{eqn:hardylittlewood}.
In particular, if the $T_j$ could be assumed to be integers in \eqref{eqn:hardylittlewood}, then 
\eqref{eqn:keyequation} would eventually hold, since the $\Gamma_r$ are bounded.  
This would  
rule out infinite staircases for all rational convex toric domains,
underscoring the subtlety of this argument.

In \cite{cgls}, Cristofaro-Gardiner, Li, and Stanley studied the function  $\text{ehr}_{\Delta_{u,v}}(T)$ 
for integer values of $T$.  They show that this function can sometimes be a quasipolynomial when 
restricted to integral $T$, even when $\frac{u}{v}$ is irrational, making both sides of \eqref{eq:ineqErhart}
quasipolynomial.  Being a quasipolynomial over $\Z$ 
implies that in contrast to \eqref{eqn:hardylittlewood} above, $d(T)$ only takes on finitely many values.
They classify all cases where the Ehrhart function is a quasipolynomial as follows.

\begin{theorem}\cite[Theorem 1(i)]{cgls}
\label{thm:cglis}
Assume that $\frac{u}{v}$ is irrational.  Then $\text{ehr}_{\Delta_{u,v}}(T)$ is a quasipolynomial if and only if
\begin{equation}
\label{eqn:theconditions}
u+v = \alpha \quad \mbox{ and } \quad \frac{1}{u} + \frac{1}{v} = \beta  \quad \mbox{ for } \quad \beta\ , \alpha\beta\ \in \mathbb{N}.
\end{equation}
\end{theorem}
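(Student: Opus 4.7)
My plan is to reduce the equivalence in Theorem~\ref{thm:cglis} to the analysis of a bounded fractional-part error in an exact formula for the Ehrhart function. Starting from
\[
\text{ehr}_{\Delta_{u,v}}(T) = \sum_{m=0}^{\lfloor T/u\rfloor}\bigl(\lfloor (T-um)/v\rfloor + 1\bigr)
\]
and using $\lfloor x\rfloor = x - \{x\}$, the volume and perimeter contributions combine to give the main quasipolynomial piece $\tfrac{T^2}{2uv} + \tfrac{T(u+v)}{2uv} + 1$, leaving a bounded error $E(T)$ built from $\{T/u\}$, $\{T/v\}$, and a Dedekind-type sum $\Sigma(T) \eqdef \sum_{m=0}^{\lfloor T/u\rfloor}\{(T-um)/v\}$. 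An asymptotic matching argument (comparing the quadratic and linear coefficients class by class with the main term) shows that $\text{ehr}_{\Delta_{u,v}}(T)$ is a quasipolynomial on $\mathbb{N}$ if and only if $uv \in \mathbb{Q}$, $(u+v)/(uv) \in \mathbb{Q}$, and $E(T)$ is eventually periodic on $\mathbb{N}$. Setting $\alpha := u+v$ and $\beta := 1/u+1/v$, both rational in either direction of the equivalence, the theorem reduces to showing that periodicity of $E(T)$ on $\mathbb{N}$ is equivalent to $\beta, \alpha\beta \in \mathbb{N}$.

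For the 'if' direction, assume $\beta, \alpha\beta \in \mathbb{N}$. Then $u,v$ are conjugate roots of $x^2 - \alpha x + \alpha/\beta$, so $v = \alpha - u$; for each $T \in \mathbb{Z}$ the identity $T/u + T/v = T\beta$ is an integer, forcing $\{T/u\}+\{T/v\} \in \{0,1\}$. Using this $u \leftrightarrow v$ symmetry together with $v = \alpha - u$, I would pair each summand of $\Sigma(T)$ with its $u \leftrightarrow v$ counterpart and show that the shift $T \mapsto T + \alpha\beta$ cyclically permutes the residues $(T-um) \bmod v$ that appear in $\Sigma(T)$, yielding $E(T+\alpha\beta) = E(T)$ for all sufficiently large $T$. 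This step is direct but notationally delicate, and gives the desired quasipolynomiality with period dividing $\alpha\beta$.

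The 'only if' direction is the crux. Assume $E(T)$ is eventually $N$-periodic on $\mathbb{N}$. If $\beta = p/q$ in lowest terms with $q > 1$, then $\{T\beta\}$ takes $q$ distinct values as $T$ runs through residues modulo $q$, and the identity $\{T/u\}+\{T/v\} \equiv T\beta \pmod 1$ forces any period of $E(T)$ to be divisible by $q$. I would then compare $\Sigma(T)$ with $\Sigma(T+N)$ along subsequences on which $\{T/u\}$ equidistributes (guaranteed by irrationality of $u/v$ together with Hardy--Littlewood-type control \cite{HL}), extracting a contradiction with the exact periodicity of $E$. A parallel argument applied to the denominator of $\alpha\beta$ then forces $\alpha\beta \in \mathbb{N}$.

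The main obstacle is making these 'only if' contradictions rigorous. Hardy--Littlewood only gives $E(T) = O(\log T)$ with oscillations of the same order when $u/v$ has bounded partial quotients, so ruling out \emph{exact} periodicity (a much stronger condition than boundedness) requires a finer Diophantine input. I expect this to rely on the quadratic-surd structure available when $uv, u+v \in \mathbb{Q}$: the continued fraction of $u$ is eventually periodic, and one should be able to produce explicit arithmetic subsequences of $T$ along which the Ostrowski representations force $\Sigma(T) \bmod 1$ into a pattern that cannot be reconciled with $N$-periodicity of $E$ unless the integrality conditions hold.
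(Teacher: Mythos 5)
This theorem is not proved in the paper you are working from: it is stated with the citation \cite[Theorem 1(i)]{cgls} precisely because the proof lives in the Cristofaro-Gardiner--Li--Stanley paper, not here. So there is no in-paper proof to compare your proposal against; the authors are importing the classification as a black box in order to derive evidence for Conjecture~\ref{conj:reflex}. Note in particular that the decomposition the present paper actually records from \cite{cgls} is the anti-diagonal sum $\text{ehr}_{\Delta_{u,v}}(T)=\sum_{m=0}^{\lfloor T/\per\rfloor}\bigl(1+\lfloor(T-m\per)/v\rfloor+\lfloor(T-m\per)/u\rfloor\bigr)$, which is set up so that $u\leftrightarrow v$ symmetry is manifest term by term (this feeds directly into the three-term telescoping that produces $C_{a_0}(n)+C_{1/a_0}(n)$ in \eqref{eq:d(T)}), rather than the single-variable sum $\sum_{m\leq T/u}(\lfloor(T-um)/v\rfloor+1)$ you start from.

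On its own terms your sketch has a genuine gap, and you have already located it: the entire ``only if'' direction is conjectural. Showing that $q\mid N$ for any purported period $N$ does not rule out periodicity, and the proposed equidistribution-plus-Hardy--Littlewood argument cannot close the case, because equidistribution of $\{T/u\}$ gives information on average whereas you need to contradict an \emph{exact} identity $E(T+N)=E(T)$ for all large integer $T$. Bounded-partial-quotient estimates on $C_\theta(n)$ control growth, not periodicity, so they do not by themselves produce the needed subsequence of contradictions. Making this rigorous is exactly the technical content of \cite{cgls}, and an honest writeup would either reproduce the Ostrowski/continued-fraction machinery from there or simply cite the result as this paper does.
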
   

\begin{remark}
In our context, following \eqref{eqn:perimetersequal} and \eqref{eq:perovervol}, we have 
$\alpha = \per$ and $\beta = \frac{\per}{\vol}$.
\end{remark}

In view of  \eqref{eqn:hardylittlewood} and Theorem~\ref{thm:cglis}, we conjecture the following.

\begin{conjecture}
\label{conj:theconjecture}
Let $\frac{u}{v}$ be a quadratic surd \footnote{\phantom{.}
To keep a strict analogy with  \cite[Theorems A3 and A4]{HL}, 
we should only require that $\frac{u}{v}$ has bounded partial quotients.  We have included the stronger 
hypothesis that $\frac{u}{v}$ is a quadratic surd because we can assume it in view of 
Theorem~\ref{satisfies quadratic equation}. It is plausible that it could help with the proof, since it implies 
that the continued fraction of $\frac{u}{v}$ is periodic.
} 
such that  $\frac{u}{v}+\frac{v}{u}\in\mathbb{Q}$ \footnote{\phantom{.}
The rationality
of $\frac{u}{v}+\frac{v}{u}$ for rational convex toric domains is a consequence of $\frac{u}{v}+\frac{v}{u}=\frac{\per^2}{\vol}-2$,
which is a rational function of the integral negative weight expansion.
}.
If  \eqref{eqn:theconditions} does not hold, then  $d(T)$ is optimally $\pm\mathcal{O}(\log(T))$
where the $T_i$ and $T_j$ are sequences of positive integers.
\end{conjecture}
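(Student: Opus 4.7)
The plan is to adapt the Hardy-Littlewood sawtooth-sum analysis to integer values of $T$, using periodicity of the continued fraction of $v/u$ afforded by the quadratic surd hypothesis, and isolating the rationality condition $\frac{u}{v}+\frac{v}{u}\in\mathbb{Q}$ as the mechanism that makes the Cristofaro-Gardiner-Li-Stanley dichotomy \eqref{eqn:theconditions} sharp. First I would write down the standard closed form
$$
d(T) \;=\; \sum_{j=0}^{\lfloor T/v\rfloor}\!\left(\tfrac{1}{2} - \left\{\tfrac{T-jv}{u}\right\}\right) \;+\; O(1),
$$
obtained by expanding $\mathrm{ehr}_{\Delta_{u,v}}(T)$ as a sum of floor functions and absorbing all non-oscillatory terms into the polynomial part.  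The upper bound $|d(T)|=\mathcal{O}(\log T)$ for all real $T$ (in particular for integer $T$) then follows from Hardy-Littlewood's theorem \cite[Thms. A3--A4]{HL}, since a quadratic surd has bounded (eventually periodic) partial quotients.

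For optimality at integer $T$, let $v/u = [a_0;\overline{a_1,\ldots,a_p}]$ with convergents $p_n/q_n$, and note that each $q_n$ is a positive integer by construction.  Substituting $T=q_n$ into the sawtooth sum and expanding the fractional parts via the Ostrowski representation in the basis $\{q_k\}$, one obtains a decomposition into $\lfloor n/p\rfloor$ blocks of bounded contributions, one per period of the continued fraction.  Periodicity of the $a_i$ ensures that the block contributions are asymptotically constant, so they sum without cancellation to produce $d(q_n)\ge K\log q_n$, \emph{provided} the block contribution is not integer-valued (see next paragraph).  Applying the same argument to the convergent denominators of $u/v$ (also a quadratic surd with periodic expansion) produces a second integer sequence with $d(T'_j)\le -K\log T'_j$, yielding two-sided optimality.

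The principal obstacle, and the heart of the conjecture, is that for $(u,v)$ satisfying \eqref{eqn:theconditions}, the per-period block contributions become $\mathbb{Z}$-valued at $T=q_n$ and get re-absorbed into the polynomial part of $\mathrm{ehr}_{\Delta_{u,v}}(T)$; this is exactly the cancellation driving the Cristofaro-Gardiner-Li-Stanley quasipolynomial theorem.  The key lemma to establish is therefore the converse: assuming $\frac{u}{v}+\frac{v}{u}\in\mathbb{Q}$ and that \eqref{eqn:theconditions} fails, at least one of the periodic block sums has fractional part bounded away from $0$, so that the logarithmic oscillation survives reduction modulo $\mathbb{Z}$.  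Plausibly, failure of $u+v\in\mathbb{Q}$ obstructs one direction of oscillation and failure of $\alpha\beta\in\mathbb{N}$ obstructs the other, which is consistent with the symmetric form of the conjectured bound.

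The hardest step is this final Diophantine lemma, which demands a sharp integer-sensitive refinement of Hardy-Littlewood matching exactly the dichotomy of Theorem~\ref{thm:cglis}.  A natural route is Fourier analysis on the finite orbit carved out by the period of the continued fraction, where $\alpha\beta\in\mathbb{N}$ can be recognized as the vanishing of a specific character sum; combining this with the standard equidistribution estimates for quadratic-surd rotation orbits should give the required non-integrality of the block sums.  One should also record that the restriction to positive-integer $T$ is not a loss, since the $q_n$ are automatically positive integers and the sign of the oscillation is controlled by the choice of $u$ vs.\ $v$.
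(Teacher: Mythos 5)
The statement you have attempted is explicitly a \emph{conjecture} in the paper --- the authors do not prove it. What they do is (i) reduce it, in the generic case where $\alpha\beta = \frac{u}{v}+\frac{v}{u}+2$ is not an integer, to the further number-theoretic Conjecture~\ref{conj:numbertheory} about when $C_{a_0}(n)+C_{1/a_0}(n)$ is bounded, by showing that for $T=n\cdot\per$ one has $d(T)=-\bigl(C_{a_0}(n)+C_{1/a_0}(n)\bigr)$; and (ii) supply heuristic/numerical evidence and record two approaches suggested by Sarnak (an elementary continued-fraction route \`a la Brown--Shiue/S\'os, or a route through Hecke's work). No proof is given because the paper itself leaves this open.

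Your outline lands in essentially the same place. Rewriting $d(T)$ as a sum of sawtooth functions and invoking Hardy--Littlewood for the $\mathcal{O}(\log T)$ upper bound in the quadratic-surd case is exactly what the paper does (this is the content of \eqref{eq:d(T)} together with \cite[Thm.\ 9]{HLanother}). The step you correctly flag as ``the hardest step'' --- showing that failure of \eqref{eqn:theconditions} forces the per-period block sums to have fractional parts bounded away from zero, so the $\log$-oscillation survives when $T$ is restricted to the integers --- is precisely the content of the paper's Conjecture~\ref{conj:numbertheory}, and is unproved. So your proposal is not a proof; it is a (reasonable) strategy that terminates at the same open lemma.

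Two technical cautions. First, your claim that $d(q_n)\ge K\log q_n$ ``provided the block contribution is not integer-valued'' is asserted but not justified: it is not obvious that the Ostrowski/per-period decomposition cleanly separates into a $\mathbb{Z}$-valued obstruction versus a surviving $\log$-scale drift, and this is the crux one would need to establish. Second, the paper's explicit computation of $d(T)$ is carried out at $T=n\cdot\per$, whereas you propose substituting convergent denominators $q_n$ of $v/u$; these sequences are generally different, so the reduction to $C_\theta$-type sums would need to be reworked for your choice of test points, or you would need to show the two families of integers give comparable information about $d$. Until the Diophantine lemma you identify is actually proved, the statement remains a conjecture, as the paper says.
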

  
We will say more about why Conjecture~\ref{conj:theconjecture} might hold below and now explain why
Conjecture ~\ref{conj:theconjecture} and Assumption~\ref{ass:irrational} imply Conjecture~\ref{conj:reflex}.
Under these assumptions, if the ellipsoid embedding 
function of $X_\Omega$ has an infinite staircase 
then $\frac{\per}{\vol},\frac{\per^2}{\vol}$ are in $\mathbb{N}$.
Consider now the convex toric domain $X_{\widetilde{\Omega}}$ corresponding to the scaled region $\widetilde{\Omega}=\frac{\per}{\vol} \Omega$.
Because $\frac{\text{per}}{\text{vol}}\in\mathbb{N}$, the region  $\widetilde{\Omega}$  is still a lattice polygon, with $\widetilde{\vol}$ and $\widetilde{\per}$ 
satisfying
$$\text{area} = \frac{\widetilde{\vol}}{2} = \frac{\per^2}{\vol}$$
and
$$\# \text{ of boundary lattice points} = \widetilde{\per}= \frac{\per^2}{\vol}. $$
\noindent Pick's Theorem states that
$$\text{area}=\# \text{ of interior lattice pts}+\frac{\# \text{ of boundary lattice pts}}{2}-1$$
and so we conclude that $\widetilde{\Omega}$ has exactly one interior lattice point: that is, it is a reflexive polygon.
This completes our argument towards Conjecture~\ref{conj:reflex}.

We close this section with some further exploration of Conjecture~\ref{conj:theconjecture}, showing how
in general it follows from Conjecture~\ref{conj:numbertheory}.
The general case is when  $\alpha \beta=\frac{u}{v} + \frac{v}{u}+2\in\mathbb{Q}$ is not an integer\footnote{\phantom{.}
By contrast, when $\Omega$ is a (scaling of a) reflexive polygon, we have that $\alpha\beta = \frac{\per^2}{\vol}$ is an integer.
The special case of Conjecture~\ref{conj:theconjecture} when $\alpha\beta = \frac{\per^2}{\vol}$ is an integer and 
$\Omega$ is not a reflexive polygon must be handled separately.
}, 
for $\alpha$ and $\beta$ defined by \eqref{eqn:theconditions}.   In this context, $\alpha\beta = \frac{\per^2}{\vol}$,
which is an invariant associated to the shape of the moment polygon $\Omega$ of the convex toric domain (up to scalings; 
see Remark~\ref{rem:invt}).
We start by examining $d(T)$.  Following  
the analysis in Cristofaro-Gardiner, Li, and Stanley of Ehrhart functions for triangles, 
by \cite[Eq. 2.1]{cgls}, $\text{ehr}_{\Delta_{u,v}}(T)$ can also be written as 
$$
\text{ehr}_{\Delta_{u,v}}(T)=\sum_{m=0}^{\lfloor{T/\per}\rfloor}\left(1+ \left\lfloor\frac{T-m\cdot \per}{v} \right\rfloor    +
\left\lfloor\frac{T-m\cdot\per}{u}\right\rfloor  \right).$$
In what follows, we denote by $\{x\}=x-\floor{x}$ the fractional part of a real number $x$. Let $T=n\cdot\per$ and rewrite $\text{ehr}_{\Delta_{u,v}}(T)$ as 
$$
\begin{array}{lr}
\text{ehr}_{\Delta_{u,v}}(n\cdot\per)  &\\
\phantom{BOO} =\sum^n_{k=0} \left(1+\left\lfloor\frac{k\cdot\per}{v}\right\rfloor +\left\lfloor\frac{k\cdot\per}{u}\right\rfloor  \right) & \text{ \footnotesize (1)}\\
\phantom{BOO} =2\sum^n_{k=0}  k + \sum^n_{k=0}\left(1+\left\lfloor\frac{k u}{v}\right\rfloor +\left\lfloor\frac{k v}{u}\right\rfloor  \right) & \text{\footnotesize (2)}\\
\phantom{BOO} =\left(2+\frac{u}{v}+\frac{v}{u} \right)\sum^n_{k=0}  k - \sum^n_{k=0} \left(-1+\left\{\frac{k u}{v}\right\} +\left\{\frac{k v}{u}\right\}  \right)
& \text{   {\footnotesize (3)}}\\
\phantom{BOO} =\frac{\per^2}{\vol}
\sum^n_{k=0}  k - \sum^n_{k=0} \left(-1+\left\{\frac{k u}{v}\right\} +\left\{\frac{k v}{u}\right\}  \right)
& \text{   {\footnotesize (4)}}\\
\phantom{BOO} =\frac{\per^2}{\vol}\frac{\left(\frac{T}{\per}\right)\left(\frac{T}{\per}+1\right)}{2}- \sum^n_{k=0} \left(\left(\left\{\frac{k u}{v}\right\}-\frac{1}{2}\right) +\left(\left\{\frac{k v}{u}\right\} -\frac{1}{2}\right) \right) &
\\
\phantom{BOO} =\frac{1}{2\vol}T^2+\frac{\per}{2\vol}T- \sum^n_{k=0} \left(\left(\left\{\frac{k u}{v}\right\}-\frac{1}{2}\right) +\left(\left\{\frac{k v}{u}\right\} -\frac{1}{2}\right) \right),&\\
\end{array}
$$
where at line (1), we reindex; at line (2) we use $\per=u+v$; at line (3), we note that $\floor{x}=x-\{x\}$; and at line (4), we use $\frac{u}{v}+\frac{v}{u}=\frac{\per^2}{\vol}-2$.

For $n\in\mathbb{N}$ and $\theta\in\mathbb{R}\setminus\mathbb{Q}$, we define as in \cite{bs} the function
$$
C_\theta(n):=\sum_{k=0}^n\left(\left\{ k\theta \right\}-\frac{1}{2}\right).
$$
Hardy and Littlewood established that the function $C_\theta(n)$ is optimally $\pm\mathcal{O}(\log(n))$ \cite[Theorem 9]{HLanother}.
This function has a long history and has been studied by 
Sylvester \cite{sylvester}, Lerch \cite{lerch}, 
Sierpi\'nski \cite{sierp}, Hardy and Littlewood \cite{HL,HLanother} (who highlighted its properties in their 1912 ICM address), 
Ostrowski \cite{ostr}, S\'os \cite{sos}, and Brown and Shiue \cite{bs}.

Comparing the final form of $\text{ehr}_{\Delta_{u,v}}(n\cdot\per)$ just above with  \eqref{eq:captriangle}, 
we conclude that for $T=n\cdot\per$ we have
\begin{equation}\label{eq:d(T)}d(T)=d(n\cdot\per)=- \left(C_{a_0}\left(n\right)+C_{1/a_0}\left(n\right) \right).\end{equation}
We conclude that, in the case when $\alpha\beta=a_0+\frac1{a_0}$ is not an integer, 
Conjecture~\ref{conj:theconjecture}  would now be a consequence of the following.

\begin{conjecture}\label{conj:numbertheory}
Let $a_0$ 
be a quadratic surd with $a_0+\frac1{a_0}\in\mathbb{Q}$. 
Then the function $C_{a_0}\left(n\right)+C_{1/a_0}\left(n\right)$ is bounded if and only if 
$a_0+\frac1{a_0}\in\mathbb{N}$.
In particular, when it is bounded, it is identically zero.
If it is unbounded, 
then it is unbounded above and below; and more specifically, it is optimally $\pm \mathcal{O}(\log(n))$.
\end{conjecture}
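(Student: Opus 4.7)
The plan is to exploit the identity $1/a_0 = K - a_0$, where $K := a_0 + 1/a_0 \in \mathbb{Q}$, and to split on whether $K$ is an integer. For the ``if'' direction, suppose $K \in \mathbb{N}$. Then $k/a_0 \equiv -ka_0 \pmod 1$, so $\{k/a_0\} = 1 - \{ka_0\}$ for every $k \geq 1$ (since $a_0$ is irrational, $ka_0 \notin \mathbb{Z}$); the terms $\{ka_0\}-\tfrac12$ and $\{k/a_0\}-\tfrac12$ cancel exactly, giving $C_{a_0}(n) + C_{1/a_0}(n) = -1$ for all $n$, which is bounded (and constant).

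For the converse, write $K = p/q$ in lowest terms with $q \geq 2$. For $k \equiv r \pmod q$ one has $k/a_0 = kp/q - ka_0$, hence $\{k/a_0\} = \{\gamma_r - ka_0\}$ with $\gamma_r := \{rp/q\}$; here $\gamma_0 = 0$ while $\gamma_r \in (0,1)$ for $r \in \{1,\dots, q-1\}$ because $\gcd(p,q)=1$. A direct case split on the relative size of $\{ka_0\}$ and $\gamma_r$ yields
\begin{equation*}
(\{ka_0\}-\tfrac12) + (\{k/a_0\}-\tfrac12) \;=\; \gamma_r - 1 + \mathbf{1}\bigl\{\{ka_0\} > \gamma_r\bigr\}.
\end{equation*}
The $r=0$ summands vanish (the indicator equals $1$ for $k\geq 1$), so summing over $1 \leq k \leq n$ produces
\begin{equation*}
C_{a_0}(n) + C_{1/a_0}(n) \;=\; -1 + \sum_{r=1}^{q-1}\bigl[D_r(n) - (1-\gamma_r)\,N_r(n)\bigr],
\end{equation*}
where $N_r(n) = \#\{1\leq k\leq n : k\equiv r \pmod q\}$ and $D_r(n) = \#\{1\leq k\leq n : k\equiv r \pmod q,\ \{ka_0\} > \gamma_r\}$. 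This reduces the conjecture to a discrepancy statement for $(\{ka_0\})$ along arithmetic progressions modulo $q$ at thresholds $\gamma_r$.

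The upper bound is then classical: reindexing $k = qm+r$ identifies each progression with an orbit of the rotation by $qa_0 \bmod 1$, whose continued fraction is eventually periodic by Lagrange and hence has bounded partial quotients. The Koksma--Erd\H{o}s--Tur\'an inequality (or, directly, an Ostrowski-digit expansion for $qa_0$) gives $|D_r(n) - (1-\gamma_r)N_r(n)| = \mathcal{O}(\log n)$, and summing over $r$ bounds the total by $\mathcal{O}(\log n)$. The genuinely hard part is the sharp lower bound: one must exhibit integer sequences $T_i, T_j \to \infty$ along which the sum attains $+c\log T_i$ and $-c\log T_j$. Each single discrepancy oscillates at amplitude $\asymp \log n$ by the sharp Hardy--Littlewood theorem \cite[Theorems A3, A4]{HL}, but one must preclude a conspiracy in which the $(q-1)$ discrepancies cancel at leading order. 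The natural strategy is to expand $n$ in the Ostrowski basis attached to the convergents of $qa_0$ and to express each $D_r - (1-\gamma_r)N_r$ as an explicit $\mathbb{Q}$-linear functional of the Ostrowski digits, whose coefficients are determined by the periodic tail of the continued fraction of $qa_0$ and by $\gamma_r$. The problem then reduces to showing that the resulting weighted sum of functionals is not identically zero on leading digits; this non-degeneracy --- a concrete algebraic condition on $a_0$ and on the denominator $q$ of $a_0 + 1/a_0$ --- is the principal obstacle and, we suspect, the reason the statement remains conjectural.
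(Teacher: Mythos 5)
This statement is labeled a \emph{conjecture} in the paper, and the paper offers no proof of it; the authors merely report two possible approaches suggested by Sarnak (an elementary one via continued fractions in the style of Brown--Shiue and S\'os, and a more sophisticated one building on Hecke). Your write-up is honest about the same thing: it proves the easy direction, reduces the hard direction to a concrete discrepancy statement, and then explicitly flags that the final non-degeneracy step is the open core. So there is no ``paper's own proof'' to compare against; what you have is a partial result and a reduction, not a proof, and you say as much.

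Two comments on the correct part of your argument. First, your computation of the bounded case gives the constant value $-1$, whereas the conjecture asserts the function is identically zero. This is not an error on your part but an apparent off-by-one in the paper's definition: the paper defines $C_\theta(n) = \sum_{k=0}^n(\{k\theta\} - \tfrac12)$, so the $k=0$ terms contribute $-1$, yet the Brown--Shiue convention (and the paper's own Figure~\ref{fig:redblue}(a), which shows the sum identically zero) start the sum at $k=1$. With the sum taken over $k \ge 1$, your cancellation $\{k/a_0\} = 1 - \{ka_0\}$ gives exactly zero, consistent with the conjecture's claim. Second, your reduction
\begin{equation*}
C_{a_0}(n) + C_{1/a_0}(n) \;=\; -1 + \sum_{r=1}^{q-1}\bigl[D_r(n) - (1-\gamma_r)\,N_r(n)\bigr]
\end{equation*}
is algebraically correct and cleanly isolates the difficulty as a joint-discrepancy/cancellation question for $(\{ka_0\})$ along residue classes modulo $q$ at the thresholds $\gamma_r$. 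Your proposed Ostrowski-digit strategy is essentially the first of the two routes the paper attributes to Sarnak, so you are on a path the authors themselves regard as promising; but as you note, the required non-degeneracy of the resulting linear functional of Ostrowski digits is precisely what is unproved, which is why the statement remains a conjecture in the paper.
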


\begin{figure}[h]
\subfloat[]
{
\includegraphics[width=0.31\textwidth]{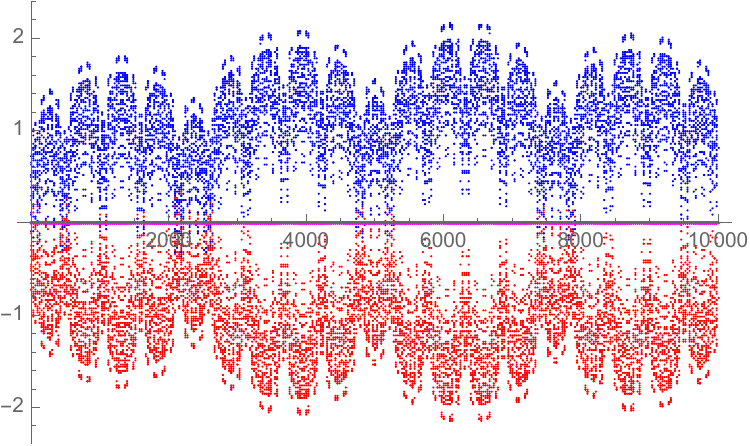} 
}
%\hfill
\subfloat[]
{
\includegraphics[width=0.29\textwidth]{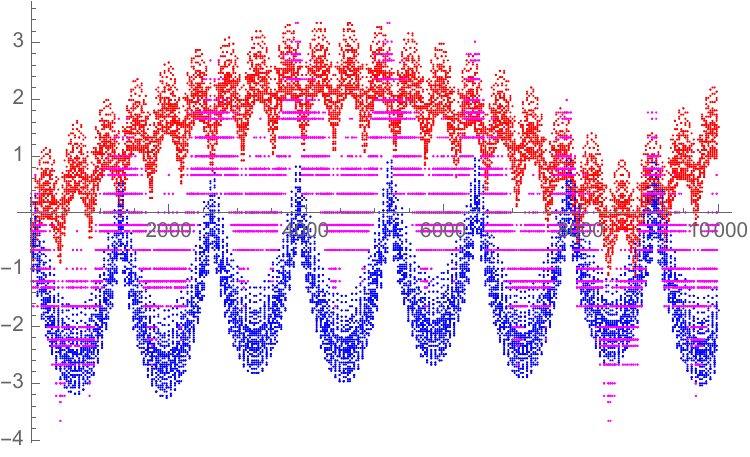} 
}
%\hfill
\subfloat[]
{
\includegraphics[width=0.31\textwidth]{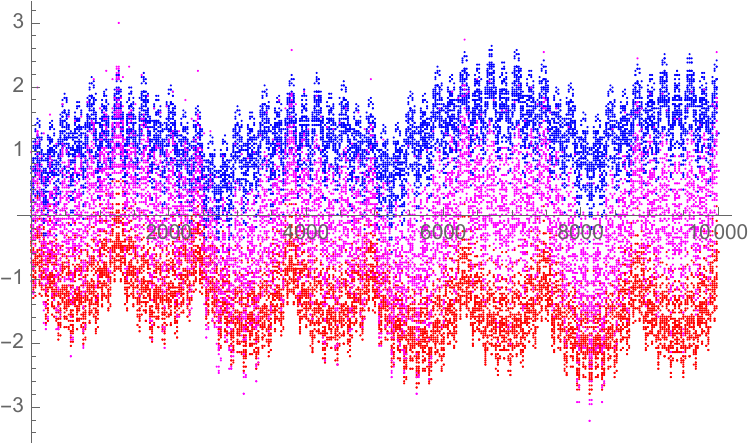}
}
\caption{Each figure shows the function {\color{blue} $C_{a_0}(n)$ in blue}, the function {\color{red} $C_{1/a_0}(n)$ in red},
and their sum  {\color{magenta}$C_{a_0}(n)+C_{1/a_0}(n)$ in magenta}.  In (a), $a_0+\frac{1}{a_0}$ is 5; in
(b) it is $\frac{61}{9}$; and in (c) it is $\frac{6+8\sqrt{3}}{3}$.
 }\label{fig:redblue}
\end{figure}

Figure \ref{fig:redblue}(a) and (b) shows some experimental evidence towards this conjecture.  
In (a), we have that $a_0+\frac{1}{a_0}$ is a natural number. In this case,
the sum $C_{a_0}(n)+C_{1/a_0}(n)$ is identically zero on the domain computed.  
In (b),  we have that  $a_0+\frac{1}{a_0}=\frac{p}{q}$ is not a natural number. Here,
the sum $C_{a_0}(n)+C_{1/a_0}(n)$ appears to be unbounded above and below. 
It takes on discrete values because it is always a multiple of $\frac{1}{q}$. 

We include Figure \ref{fig:redblue}(c) for contrast: here $a_0+\frac{1}{a_0}$ is irrational, 
more specifically, $a_0$ is the accumulation point $S_{2,0}$ for one of Usher's infinite staircases \cite{usher}. This
example is not in the scope of this manuscript because it does not correspond to a rational convex toric domain.
The sum  $C_{a_0}(n)+C_{1/a_0}(n)$ still appears to be unbounded above and below.  

Sarnak has suggested two possible approaches for resolving Conjecture~\ref{conj:numbertheory}:
an  elementary approach involving continued fractions, along the lines of \cite{bs, sos}; or
a more sophisticated approach  building on results of Hecke's \cite{hecke}.  This seems to us a good starting point for future work on this conjecture.

\newpage

\appendix
\section{Lattice Paths}\label{ap:convex lattice paths}

\noindent In this appendix, we compile the combinatorial data we need for Proposition~\ref{prop:outer}.

\begin{figure}[htbp]
\centering
\includegraphics[width=\textwidth]{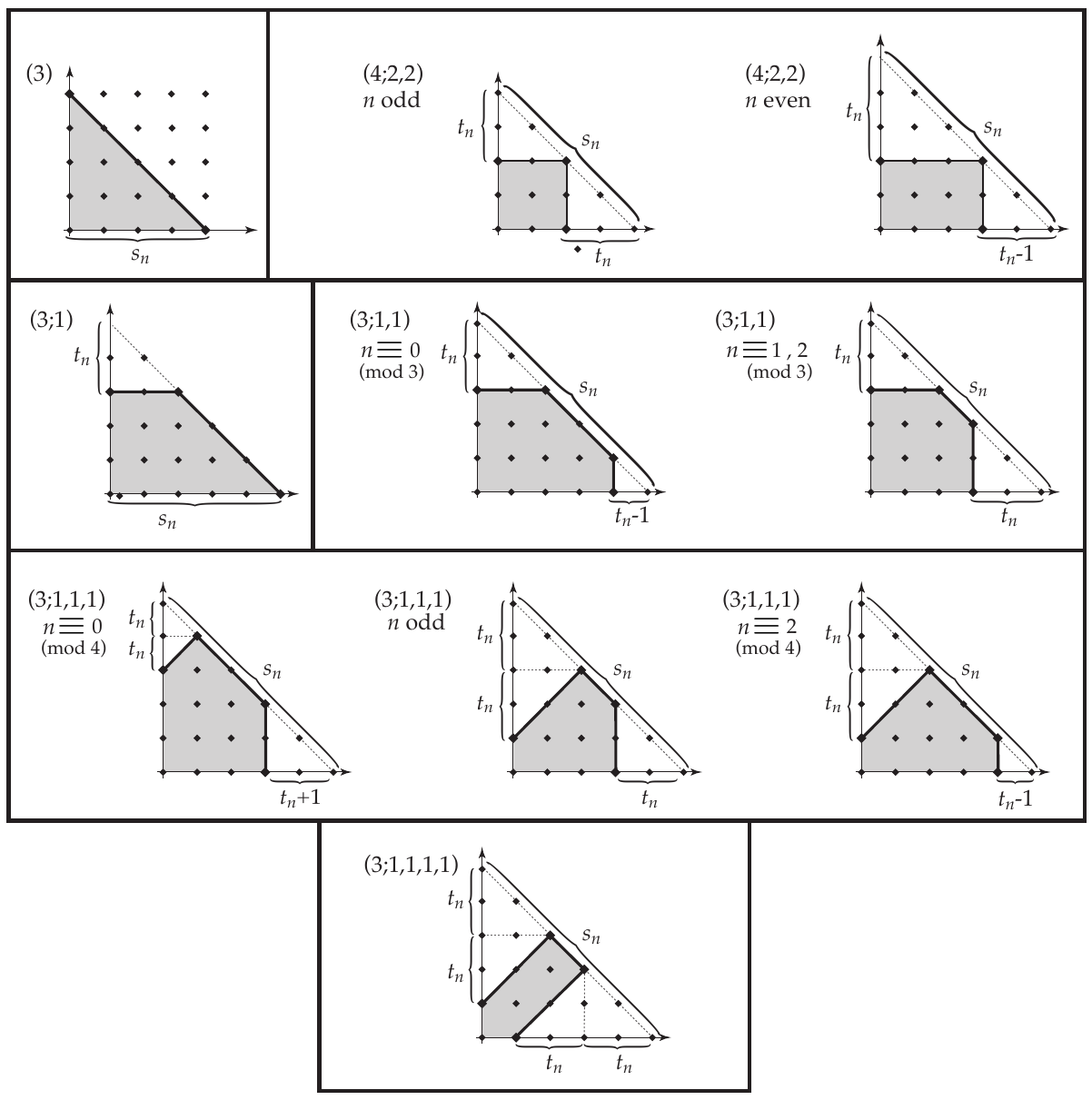}
\caption{The lattices paths $\Lambda_n$.  All triangles drawn with size $s_n$ or $t_n$ on one of their sides are right-angle equilateral triangles (in the ``length" that counts lattice points).}
\label{fig:paths}
\end{figure}

The sequences $s_n$ and $t_n$ that determine the paths $\Lambda_n$ 
have formul\ae\ in terms of their negative weight expansions.  In all six cases, the negative weight expansion is of the form
$(B;b,\dots,b)$.  In terms of those $B$ and $b$, we have
\begin{equation}\label{eq:sn and tn}
\textstyle{s_n = \frac{B\cdot (g(n)+g(n+J)) + c_n}{\vol} \mbox{ and } t_n = \frac{b\cdot (g(n)+g(n+J)) + d_n}{\vol},}
\end{equation}
where $c_n$ and $d_n$ are given in Table~\ref{tab:cn dn}.
We also have, in terms of $B$, $b$, and $k$ the number of $b$s,
\begin{equation}\label{eq:blowup Omega length}
\ell_{\Omega}(\Lambda_n) = B\cdot s_n + kb \cdot t_n + e_n.
\end{equation}
The $e_n$ are given explicitly in Table~\ref{tab:cn dn} and implicitly in Table~\ref{tab:calL ell}.

To prove Proposition~\ref{prop:outer}, these quantities  $\mathcal{L}(\Lambda_n)$ and  $\ell_\Omega(\Lambda_n)$ must
satisfy \eqref{eq:points and length}.  Using the definitions of $s_n$ and $t_n$, as well as the properties $\heartsuit$ and $\clubsuit$ of Lemma~\ref{lemma:identities}, one can
argue directly that $\mathcal{L}(\Lambda_n)= \frac{(g(n)+1)(g(n+J)+1)}{2}$.  We also have
\begin{eqnarray*}
\ell_\Omega(\Lambda) & = & B\cdot s_n + kb \cdot t_n + e_n \\
 & = & \frac{(B^2-k b^2)(g(n)+g(n+J))+Bc_n-bkd_n}{\vol}+e_n\\
 & = & g(n)+g(n+J) +\frac{Bc_n-bkd_n +\vol\cdot e_n}{\vol}.
\end{eqnarray*}
Checking that $Bc_n-kbd_n+\vol\cdot  e_n = 0$ for all cases and all moduli of $n$ then guarantees $\ell_\Omega(\Lambda)=g(n)+g(n+J)$, as desired.

\renewcommand{\arraystretch}{1.2}

\begin{center}
{%\setlength{\extrarowheight}{1.1pt}
\begin{table}[h]
\begin{tabular}{| c ||  c | c |  c | c |}
\hline
$(B;b,\dots,b)$            & $\vol$          & $c_n$ & $d_n$ & $e_n$  \\ \hline \hline
$(3)$          & $9$                   & $0$ & $\nexists d_n$ & 0   \\  \hline
$(4;2,2)$        & $8$                     &  $0$ &  $\begin{array}{ll}
4 & n\mbox{ even}\\
0 & n\mbox{ odd}\end{array}$  & $\begin{array}{ll}
2 & n\mbox{ even}\\
0 & n\mbox{ odd}\end{array}$ \\  \hline
$(3;1)$      & $8$                       & 
$\begin{array}{rl}
& \mbox{mod } 6 \\
2  & n\equiv 0  \\
-1  &n\equiv 1 \\
1   &n\equiv 2 \\
-2   &n\equiv 3 \\
1   &n\equiv 4 \\
-1  &n\equiv 5 \\ \end{array}$       &
$\begin{array}{rl}
& \mbox{mod } 6 \\
6  & n\equiv 0  \\
-3   &n\equiv 1 \\
3   &n\equiv 2 \\
-6   &n\equiv 3 \\
3   &n\equiv 4 \\
-3   &n\equiv 5 \\ \end{array}$       
& 0
\\  \hline
$(3;1,1)$    & $7$ & $\begin{array}{rl}
& \mbox{mod } 6 \\
1  & n\equiv 0 \\
-2  &n\equiv 1 \\
2   &n\equiv 2 \\
-1   &n\equiv 3 \\
2   &n\equiv 4 \\
-2  &n\equiv 5 \\ \end{array}$       &
$\begin{array}{rl}
& \mbox{mod } 6 \\
5  & n\equiv 0  \\
-3   &n\equiv 1 \\
3   &n\equiv 2 \\
2   &n\equiv 3 \\
3   &n\equiv 4 \\
-3   &n\equiv 5 \\ \end{array}$       
&  $\begin{array}{rl}
& \mbox{mod } 3 \\
0  & n\equiv 0  \\
1  &n\equiv 1,2 \\ \end{array}$ 
\\  \hline
$(3;1,1,1)$ & $6$ & $\begin{array}{rl}
& \mbox{mod } 4 \\
0  & n\equiv 0  \\
3  &n\equiv 1 \\
0   &n\equiv 2 \\
-3   &n\equiv 3 \\ \end{array}$       &
$\begin{array}{rl}
& \mbox{mod } 4 \\
-2  & n\equiv 0   \\
3   &n\equiv 1 \\
2   &n\equiv 2 \\
-3   &n\equiv 3 \\ \end{array}$     
&  $\begin{array}{rl}
& \mbox{mod } 4 \\
-1  & n\equiv 0   \\
0  & n\equiv 1,3 \\
1  &n\equiv 2 \\ \end{array}$   
\\  \hline
$(3;1,1,1,1)$  & $5$  & $\begin{array}{rl}
& \mbox{mod } 4 \\
4  & n\equiv 0  \\
0  &n\equiv 1 \\
-4   &n\equiv 2 \\
0   &n\equiv 3 \\ \end{array}$       &
$\begin{array}{rl}
& \mbox{mod } 4 \\
3 & n\equiv 0  \\
0   &n\equiv 1 \\
-3   &n\equiv 2 \\
0   &n\equiv 3 \\ \end{array}$       & 0
\\  \hline
\end{tabular}\caption{The constants $c_n$ and $d_n$ used in the formul\ae\ in \eqref{eq:sn and tn}, by 
negative weight expansion; and the constants $e_n$ for  \eqref{eq:blowup Omega length}.}\label{tab:cn dn}
\end{table}
}
\end{center}

%\newpage

\begin{landscape}

\renewcommand{\arraystretch}{1.4}

\begin{center}
{%\setlength{\extrarowheight}{1.5pt}
\begin{table}[h]
\begin{tabular}{| c ||  c | c |}
\hline
$(B;b,\dots,b)$                      & $\mathcal{L}(\Lambda_n)$       &     $\ell_\Omega(\Lambda_n)$   \\ \hline \hline
$(3)$                       & $\frac{(s_n+1)(s_n+2)}{2}$    &    $3 s_n$ \\ \hline
$(4;2,2)$                     &
 $ \begin{array}{ll}
\frac{(s_n+1)(s_n+2)-t_n(t_n+1)-t_n(t_n-1)}{2} & n\mbox{ even}\\
\frac{(s_n+1)(s_n+2)-2\cdot t_n(t_n+1)}{2}  & n\mbox{ odd}\end{array}$  
   &    $ \begin{array}{ll}
2 (s_n-(t_n-1)) + 2 (s_n-t_n) & n\mbox{ even}\\
2 (s_n-t_n) + 2 (s_n-t_n) & n\mbox{ odd}\end{array}$  \\ \hline   
$(3;1)$                       &  $\frac{(s_n+1)(s_n+2)-t_n(t_n+1)}{2}$       
&   $2(t_n)+3(s_n-t_n)$      \\ \hline    
$(3;1,1)$             &
 $ \begin{array}{ll}
 &\mbox{mod } 3\\
\frac{(s_n+1)(s_n+2)-2\cdot t_n(t_n+1)}{2}  & n\equiv 0   \\
\frac{(s_n+1)(s_n+2)-t_n(t_n+1)-t_n(t_n-1)}{2} & n\equiv 1,2 
\end{array}$  
   &    $\begin{array}{ll}
 &\mbox{mod } 3\\
2 t_n+3 (s_n-2t_n) + 2 t_n & n\equiv 0  \\
2t_n+3(s_n-2t_n+1)+2(t_n-1) &  n\equiv 1,2  \end{array}$        \\ \hline    
$(3;1,1,1)$                   &    $\begin{array}{ll}
& \mbox{mod } 4 \\
\frac{(s_n+1)(s_n+2)-2t_n(t_n+1)-(t_n+1)(t_n+2)}{2}  & n\equiv 0   \\
   \frac{(s_n+1)(s_n+2)-3 t_n (t_n+1)}{2}  & n\equiv 1,3   \\ 
\frac{(s_n+1)(s_n+2)-2t_n(t_n+1)-(t_n-1)t_n}{2} & n\equiv 2 
\end{array}$     &  $\begin{array}{ll}
& \mbox{mod } 4 \\
t_n+3(s_n-2t_n-1)+2(t_n+1) & n\equiv 0 \\
   t_n+3(s_n-2t_n)+2t_n  & n\equiv 1,3   \\ 
t_n+3(s_n-2t_n+1)+2(t_n-1) & n\equiv 2 
\end{array}$         \\ \hline   
$(3;1,1,1,1)$                       &     $\frac{(s_n+1)(s_n+2)-4 t_n(t_n+1)}{2}$ &    $t_n+3(s_n-2t_n)+t_n$                \\ \hline     
\end{tabular}
\caption{The quantities $\mathcal{L}(\Lambda_n)$ and  $\ell_\Omega(\Lambda_n)$, by negative weight expansion.  
The first, $\mathcal{L}(\Lambda_n)$, counts lattice points enclosed by $\Lambda_n$.  The second, $\ell_\Omega(\Lambda_n)$,
is a notion of length of the path, defined in \eqref{eq:Omega length}; the constant term in each expression
is $e_n$  in \eqref{eq:blowup Omega length}.}\label{tab:calL ell}
\end{table}
}
\end{center}
\end{landscape}

\newpage

\section{ATFs}\label{ap:atfs}

In this appendix, we describe the initial ATF maneuvering required to produce ATFs on the manifolds
$$
\C P^2_3\ ; \  \C P^2_3\# \overline{\C P}^2_1\ ; \  \C P^2_3\# 2\overline{\C P}^2_1\ ; \  \C P^2_3\# 3\overline{\C P}^2_1\ ;
$$
$$
 \C P^2_3\# 4\overline{\C P}^2_1\ ; \ \mbox{and } \C P^1_2\times \C P^1_2
$$
that have base diagram a triangle with two nodal rays when $J=2$ and a quadrilateral with three nodal
rays when $J=3$.

For $\C P^2_3$, $J=2$ and the moment image is already a triangle.  We must simply apply nodal
trades to add nodal rays at the two corners not at the origin.  See Figure~\ref{fig:3-pregame}.

\begin{center}
  \begin{figure}[ht]
\begin{overpic}[
scale=0.5,unit=1mm]{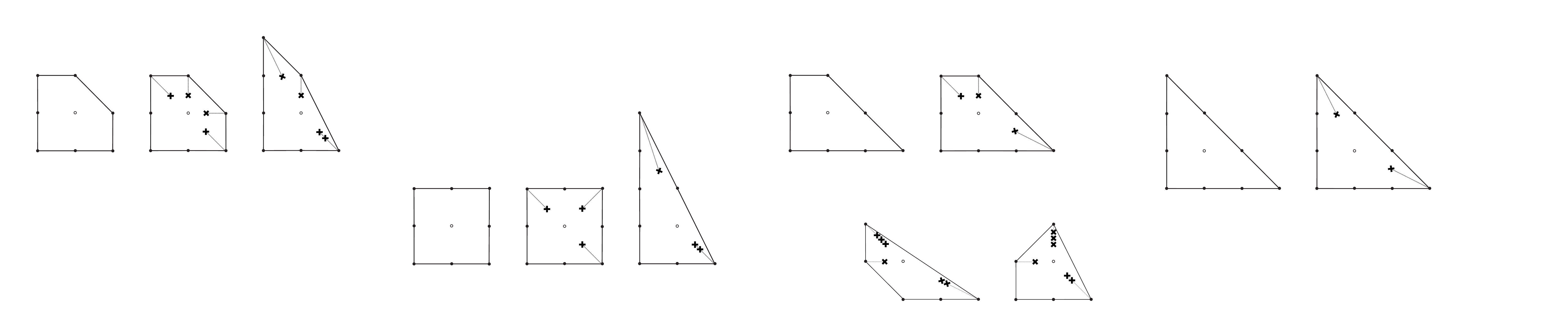}
   \put(20,-3){(a)}
   \put(70,-3){(b)}
\end{overpic}
\caption{In (a), we see the Delzant polygon for $\C P^2_3$.  From 
(a) to (b), we apply two nodal trades to add two singular fibers, creating a new 
ATF on $\C P^2_3$.}
\label{fig:3-pregame}
\end{figure}
\end{center}

%\newpage

For $ \C P^2_3\# \overline{\C P}^2_1$, $J=3$ and the moment image is already a quadrilateral.  We must simply apply nodal
trades to add nodal rays at the three corners not at the origin.  See Figure~\ref{fig:3-1-pregame}.

\begin{center}
  \begin{figure}[ht]
\begin{overpic}[%grid,
scale=0.43,unit=1mm]{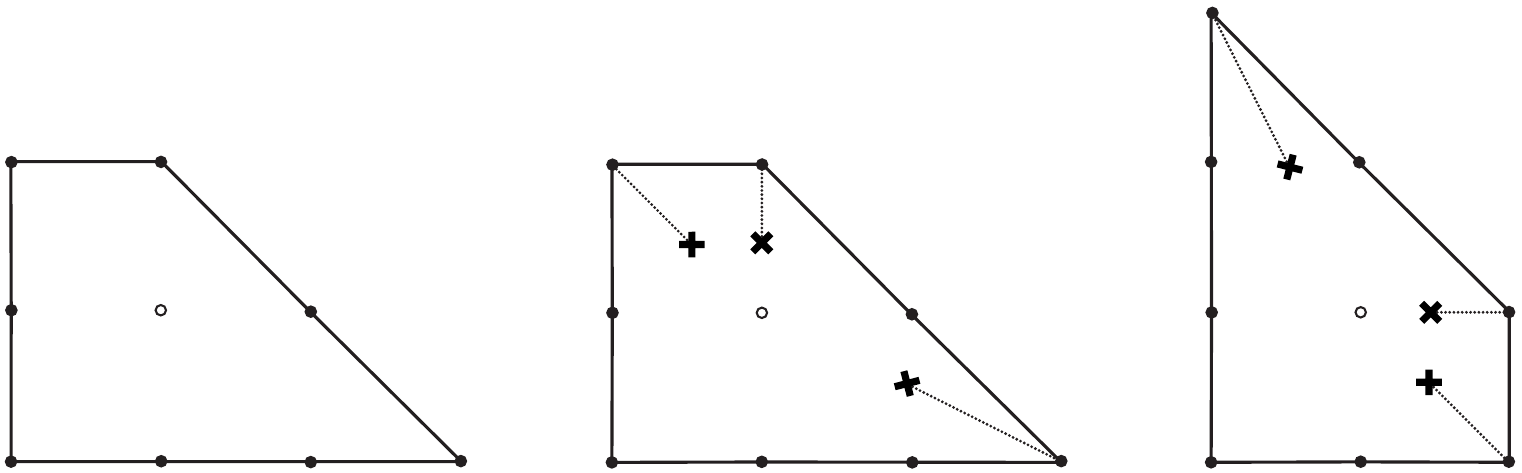}
   \put(14,-3.5){(a)}
   \put(57,-3.5){(b)}
   \put(98,-3.5){(c)}
   \put(42,24){$\mathbf v$}
   \put(111,-2){$\mathbf w$}
\end{overpic}

\vskip 0.1in

\caption{In (a), we see the Delzant polygon for $ \C P^2_3\# \overline{\C P}^2_1$.  From 
(a) to (b), we apply three nodal trades to add three singular fibers.  This represents a
new ATF on $ \C P^2_3\# \overline{\C P}^2_1$
 From (b) to (c), we apply
a mutation changing anchor vertex $\mathbf v$ in (b) to new anchor $\mathbf w$ in (c),
creating yet a third 
%almost toric fibration 
ATF on $ \C P^2_3\# \overline{\C P}^2_1$. The mutation matrix is $\left(\begin{smallmatrix}
0&-1\\1&2
\end{smallmatrix}\right)$. }  
\label{fig:3-1-pregame}
\end{figure}
\end{center}

\newpage

For $ \C P^2_3\# 2\overline{\C P}^2_1$, $J=3$ and the moment image is a pentagon.  
There is a sequence of  ATF moves that achieves a quadrilateral.
See Figure~\ref{fig:3-1-1-pregame}.

\begin{center}
  \begin{figure}[ht]
\begin{overpic}[%grid,
scale=0.5,unit=1mm]{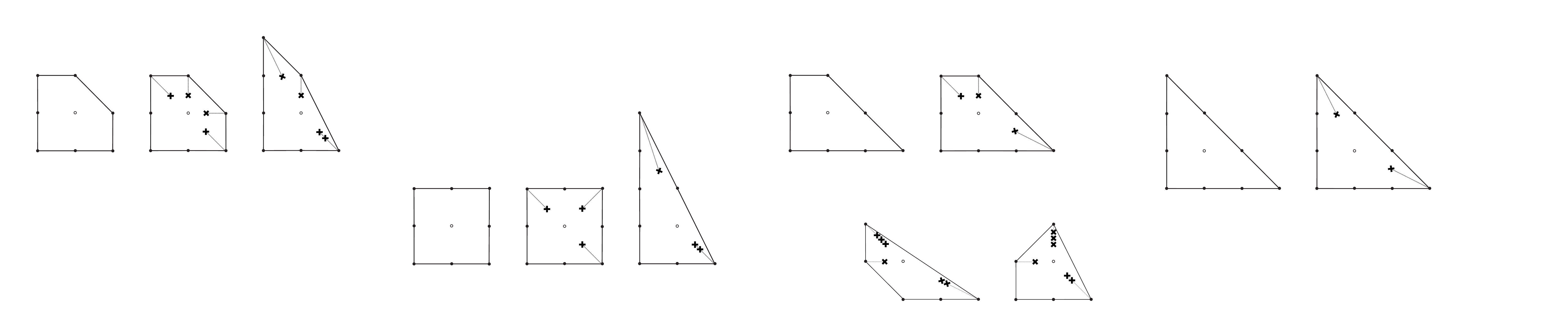}
   \put(13,-5){(a)}
   \put(50,-5){(b)}
   \put(89,-5){(c)}
   \put(37,27){$\mathbf v$}
   \put(105,-1.5){$\mathbf w$}

\end{overpic}
\vskip 0.15in

\caption{In (a), we see the Delzant polygon for $ \C P^2_3\# 2\overline{\C P}^2_1$.  From 
(a) to (b), we apply four nodal trades to add four singular fibers, creating a new 
ATF on $\C P^2_3\# 2\overline{\C P}^2_1$.  From (b) to (c), we apply
a mutation changing anchor vertex $\mathbf v$ in (b) to new anchor $\mathbf w$ in (c), 
with resulting base diagram a quadrilateral with three nodal rays, as desired. The mutation matrix is $\left(\begin{smallmatrix}
0&-1\\1&2
\end{smallmatrix}\right)$. 
In (c), two of the nodal rays have a single singular fiber and the third at $\mathbf w$ 
has two singular fibers.
This is yet a third %almost toric fibration 
ATF on $\C P^2_3\# 2\overline{\C P}^2_1$.
}
\label{fig:3-1-1-pregame}
\end{figure}
\end{center}

\newpage

For $ \C P^2_3\# 3\overline{\C P}^2_1$, $J=2$ and the moment image is a hexagon.  
There is a sequence of  ATF moves that achieves a triangle.
See Figure~\ref{fig:3-1-1-1-pregame}.

\begin{center}
  \begin{figure}[ht]
\begin{overpic}[%grid,
scale=0.5,unit=1mm]{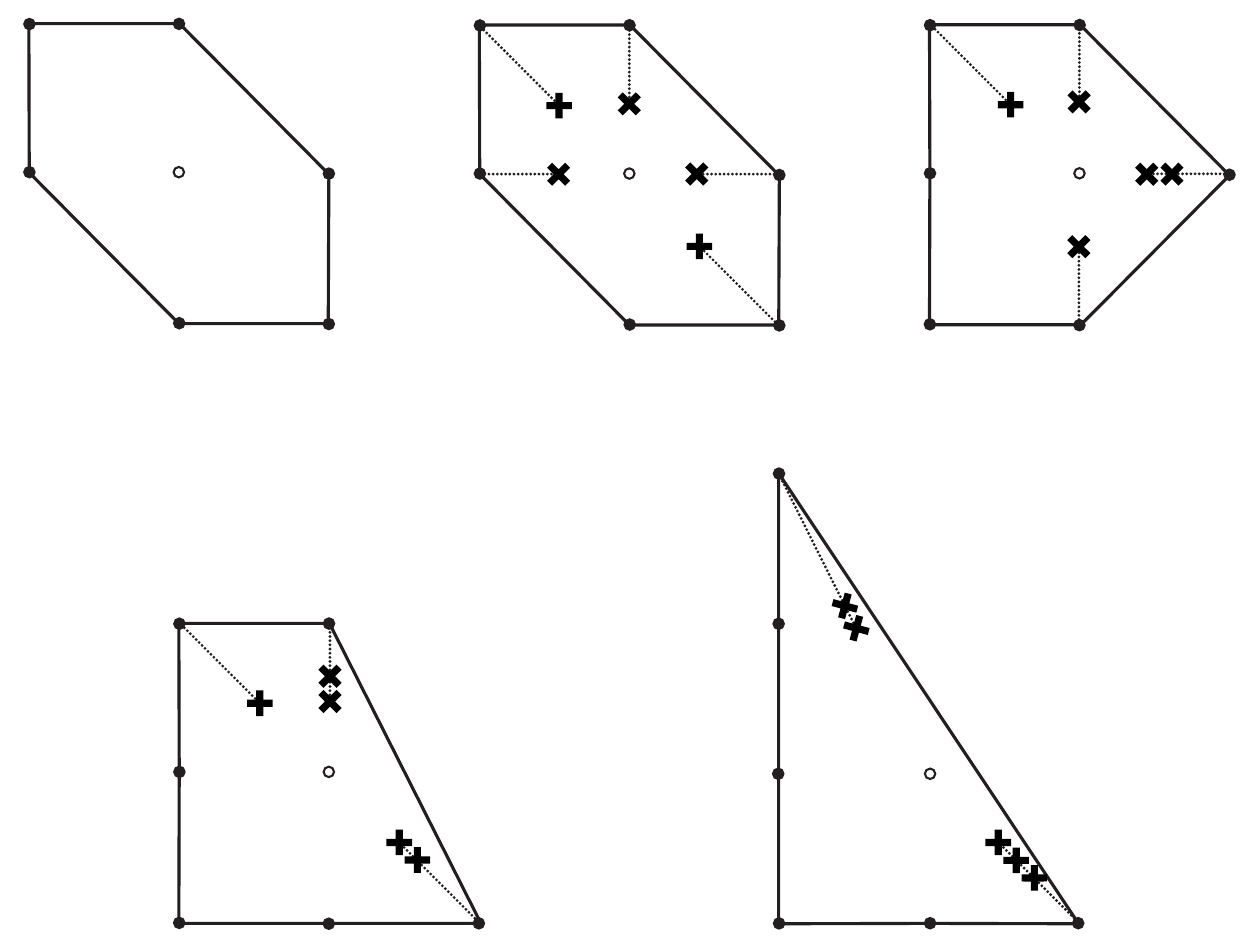}
   \put(18,47){(a)}
   \put(55,47){(b)}
   \put(85,47){(c)}
   \put(24,-3){(d)}
   \put(75,-3){(e)}
   \put(35,65){$\mathbf v_1$}
   \put(105,65){$\mathbf w_1$}
   \put(92,50){$\mathbf v_2$}
   \put(27,30){$\mathbf w_2$}
   \put(12,30){$\mathbf v_3$}
   \put(92,-){$\mathbf w_3$}
\end{overpic}

\vskip 0.1in
\caption{In (a), we see the Delzant polygon for $ \C P^2_3\# 3\overline{\C P}^2_1$.  From 
(a) to (b), we apply five nodal trades to add five singular fibers, creating a new 
%almost toric fibration 
ATF on $\C P^2_3\# 3\overline{\C P}^2_1$.  From (b) to (c), we apply
a mutation with anchor node changing from $\mathbf{v_1}$ to $\mathbf{w_1}$ and 
with resulting base diagram a pentagon with four nodal rays. The mutation matrix is $\left(\begin{smallmatrix}
1&1\\0&1
\end{smallmatrix}\right)$.  From (c) to (d), we
apply another mutation, now with anchor node changing from $\mathbf{v_2}$ to $\mathbf{w_2}$ 
and with resulting base diagram a quadrilateral with three nodal rays. The mutation matrix is $\left(\begin{smallmatrix}
1&0\\-1&1
\end{smallmatrix}\right)$.   Finally,
from (d) to (e), we perform a third mutation with anchor node changing from $\mathbf{v_3}$ to $\mathbf{w_3}$ 
and with resulting base diagram the desired triangle
with two nodal rays. The mutation matrix is $\left(\begin{smallmatrix}
0&-1\\1&2
\end{smallmatrix}\right)$. 
In (e), one of the nodal rays has two singular fibers and the other has three singular fibers.
}
\label{fig:3-1-1-1-pregame}
\end{figure}
\end{center}

\newpage

For $\C P^2_3\# 4\overline{\C P}^2_1$, $J=2$ but the manifold is not toric.  We begin by using Vianna's
trick \cite[\S3.2]{vianna} to find an appropriate ATF on this manifold.  Specifically, we begin with 
the ATF on $\C P^2_3\# 3\overline{\C P}^2_1$ given in Figure~\ref{fig:3-1-1-1-pregame}(e).  This ATF
has a smooth toric corner at the origin where we may perform a toric blowup of symplectic size $1$.  
In terms of the base diagram, this corresponds to chopping off a $1\times 1$ triangle at the origin.
This results in a quadrilateral with two nodal rays representing an ATF on $\C P^2_3\# 4\overline{\C P}^2_1$, 
shown in Figure~\ref{fig:3-1-1-1-1-pregame}(b).
There is then a sequence of  ATF moves that achieves a triangle with two nodal rays.
See Figure~\ref{fig:3-1-1-1-1-pregame}.

%\newpage

\begin{center}
  \begin{figure}[ht]
\begin{overpic}[%grid,
scale=0.48,unit=1mm]{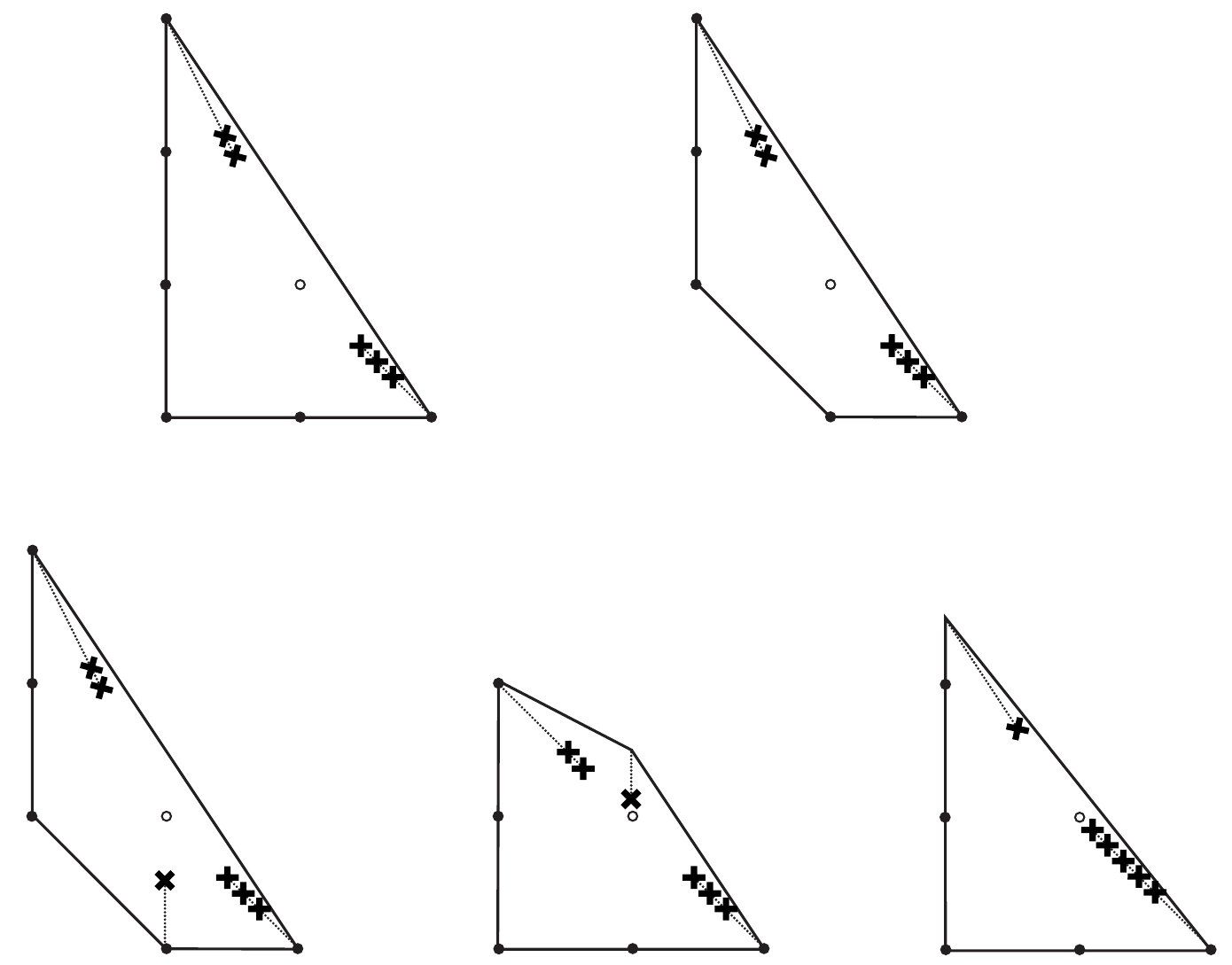}
   \put(25,44){(a)}
   \put(78,44){(b)}
   \put(15,-5){(c)}
   \put(57,-5){(d)}
   \put(100,-5){(e)}
   \put(11,46){$\mathbf O$}
   \put(71,49){$\mathbf{v_1}$}
   \put(10,0.5){$\mathbf{v_1}$}
   \put(59,20){$\mathbf{w_1}$}
   \put(42,28){$\mathbf{v_2}$}
   \put(110,-3){$\mathbf{w_2}$}
\end{overpic}

\vskip 0.2in
\caption{In (a), we see the base diagram for the ATF on $ \C P^2_3\# 3\overline{\C P}^2_1$ 
illustrated in \ref{fig:3-1-1-1-pregame}(e).  
From (a) to (b), we apply a toric blowup of size $1$ at the point 
labeled $\mathbf O$, resulting in an
almost toric fibration on $\C P^2_3\# 4\overline{\C P}^2_1$.  From (b) to (c), we apply
one nodal trade at the point labeled $\mathbf{v_1}$. From (c) to (d), we
apply a mutation with anchor node changing from $\mathbf{v_1}$ to $\mathbf{w_1}$ and  
with resulting base diagram a quadrilateral with three nodal rays. The mutation matrix is $\left(\begin{smallmatrix}
1&0\\1&1
\end{smallmatrix}\right)$.  Finally,
from (d) to (e), we perform a second mutation with anchor node changing from $\mathbf{v_2}$ to $\mathbf{w_2}$ and 
with resulting base diagram the desired triangle
with two nodal rays. The mutation matrix is $\left(\begin{smallmatrix}
3&2\\-2&-1
\end{smallmatrix}\right)$. 
In (e), one of the nodal rays has one singular fiber and the other has five singular fibers.
}
\label{fig:3-1-1-1-1-pregame}
\end{figure}
\end{center}

\newpage

For $\C P^1_2\times \C P^1_2$, $J=2$ and the moment image is a quadrilateral. There is a sequence of 
ATF moves that achieves a triangle, shown in Figure~\ref{fig:4-2-2-pregame}

\begin{center}
  \begin{figure}[ht]
\begin{overpic}[%grid,
scale=0.5,unit=1mm]{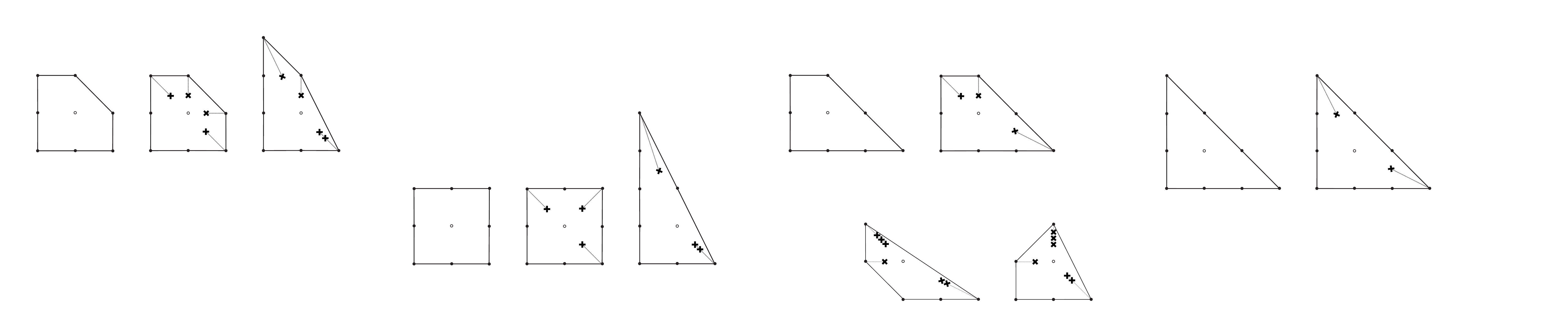}
   \put(12,-5){(a)}
   \put(50,-5){(b)}
   \put(89,-5){(c)}
   \put(36,27){$\mathbf{v}$}
   \put(104,-2){$\mathbf{w}$}
\end{overpic}

\vskip 0.2in

\caption{In (a), we see the Delzant polygon for $\C P^1_2\times \C P^1_2$.  From 
(a) to (b), we have applied three nodal trades to add three singular fibers, creating a new 
ATF on $\C P^1_2\times \C P^1_2$.  Finally, from (b) to (c), we apply
a mutation with anchor node changing from $\mathbf{v}$ to $\mathbf{w}$ and   
with resulting base diagram a triangle with two nodal rays, as desired. The mutation matrix is $\left(\begin{smallmatrix}
0&-1\\1&2  \end{smallmatrix}\right)$.  In
(c), one of the nodal rays has a single singular fiber and the other has two singular fibers.}
\label{fig:4-2-2-pregame}
\end{figure}
\end{center}

\newpage

\section{Behind the scenes}\label{appendix}

In this section, we give an account of how we found the six negative weight expansions that appear in
Theorem \ref{thm:main}. At the beginning of this project,
the ellipsoid embedding functions for the ball, polydisk, and ellipsoid $E(2,3)$
were known to have infinite staircases. These correspond to the blowup 
vectors $(1)$, $(4;2,2)$, and $(3;1,1,1)$.

By Theorem~\ref{satisfies quadratic equation}, we knew where the accumulation 
point would occur for any domain, if an infinite staircase were to exist. 
We wrote Mathematica code that generates an approximation of the graph 
of $c_X(a)$ for a given $X$, and started by trying a number of different integer 
negative weight expansions. By chance, we first tried $(3;1)$ and found an infinite staircase.
The vector $(3;1,1)$ admitted one too, and we were off, trying to prove that there
was always an infinite staircase.  The actual answer, of course, has turned out
to be more subtle.
The code we used in our early searches is included below and the 
notebook is available \cite{code}.

The idea behind the code is that the ellipsoid embedding function can 
be computed as the supremum of ratios of ECH capacities, as in 
equation \eqref{def:csup}. We compute a large (but finite!) number of ECH 
capacities of the domain $X$ using the sequence subtraction operation of 
Definition \ref{def:seqsum}. We also compute a large but finite number of 
ECH capacities of $E(1,a_i)$, for equally spaced values $a_i$ within a given 
range. Next, for each $a_i$ we find the maximum of the ratios of the 
computed ECH capacities, obtaining a list of points $(a_i,\tilde{c}_X(a_i))$. 
Our code then approximates the graph by connecting the dots. 
Figure~\ref{fig:4pics} illustrates four examples.

The graph of $\tilde{c}_X(a)$ is an approximation of the graph of the embedding function $c_X(a)$ in two senses: 
we are only using a finite number of points in the domain, and the computed 
values $\tilde{c}_X(a_i)$ are not completely accurate because we have restricted to
a finite list of ECH capacities. Nonetheless, the approximation does allow us to 
visually rule out certain domains from the possibility of having an infinite staircase.
For example, in the bottom left graph in Figure~\ref{fig:4pics}(c), there clearly exists 
an obstruction where the infinite staircase would have to accumulate, so that blowup
vector must not admit an infinite staircase. In cases where it is more ambiguous, 
we change the range of points $a_i$, ask the code to compute more ECH capacities, and hence zoom in on the graph to probe further. 
In other cases, more careful analysis may be needed.
The bottom right graph in Figure~\ref{fig:4pics}(d) provides such a case.
At the accumulation point itself, there is no obstruction and the capacity function
equals the volume lower bound.  As illustrated in the figure, there is an
obstruction to the left of the accumulation point which prevents 
an ascending staircase approaching the accumulation point from 
the left.  On the other side of the accumulation point, 
we may use the fact that the blowup
vector of the ellipsoid $E(3,4)$ is $(4;1,1,1,1)$, as indicated in 
Figure~\ref{fig:4-1-1-1-1-is-ellipsoid}, to calculate the capacity function.  
The first named author has
provided a detailed analysis \cite[\S 2.5]{cg ellipsoid} 
to show that for an interval 
after the accumulation point,
the embedding function for this blowup vector
is $\frac{a+3}{4}$, so there is no staircase to the right of the
accumulation point either.

\newpage

\begin{center}
  \begin{figure}[ht]
\includegraphics[width=1.5in]{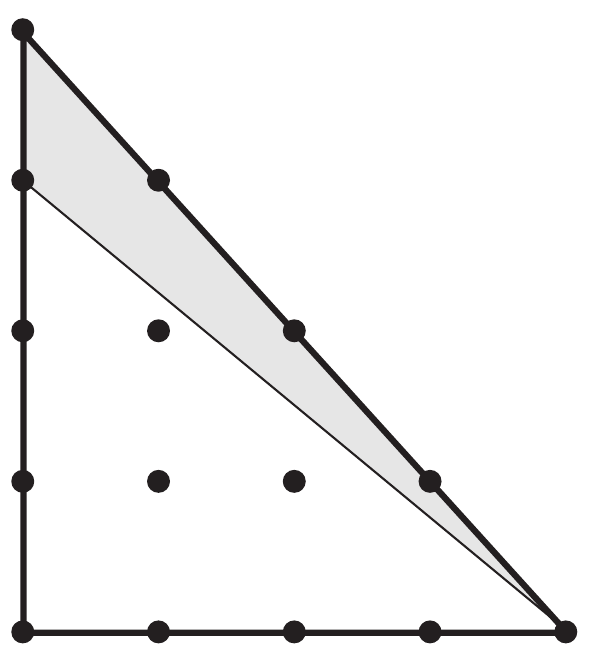}
\caption{This figure shows the triangle corresponding to a ball
with capacity $4$.  The shaded region corresponds to an ellipsoid
$E(1,4)$ or equivalently four balls of capacity $1$.  Thus, the 
remaining ellipsoid $E(3,4)$ has blowup vector $(4;1,1,1,1)$.}
\label{fig:4-1-1-1-1-is-ellipsoid}
\end{figure}
\end{center}

Whenever this zooming in process suggested that there indeed exists an infinite staircase for that domain, the next step was to find the coordinates of the inner and outer corners of the staircase. Recall that in the ball case these are ratios of certain Fibonacci numbers, so we were interested in obtaining a recurrence sequence from the numerators and denominators of these coordinates. We used the Mathematica function Rationalize to approximate the values $\tilde{c}_X(a_i)$ by fractions with small denominators and then fed the integer sequences obtained into OEIS, the Online Encyclopedia of Integer Sequences, sometimes unearthing unexpected connections\footnote{\phantom{.}In one instance, the integer sequence that came up on the OEIS search engine was sequence A007826: numbered stops on the Market-Frankford rapid transit (SEPTA) railway line in Philadelphia, PA USA. This constitutes possibly the first ever application of symplectic geometry to mass transit.}. Eventually we switched to using the function FindLinearRecurrence on Mathematica to find the linear recurrence for the sequences found.

\subsection*{The Mathematica Code} $\phantom{boo}$

\vskip 0.2in

\noindent List of first $\left(\frac{  (\floor{\frac{k a}{b}}+1)(k+1)  }   {2}-1\right)$ ECH capacities of  $E(a,b)$, we usually set $k=100$:

\lstset{language=Mathematica} 
\begin{lstlisting}
ECHellipsoid[a_, b_, k_] := 
  Module[{l = Floor[(k + 1) Floor[1 + k a/b]/2] - 1}, 
   Take[Sort[Flatten[Table[N[m a + n b], {m, 0, k}, 
    {n, 0, k}]]], l]];
\end{lstlisting}

\noindent List of first $\left(\frac{(k-1)^2}{2}\right)$ ECH capacities of the ball $E(1,1)$, usually we set $k=100$:

\begin{lstlisting}
ECHball[k_] := 
  Take[Sort[Flatten[N[Array[Array[k - # &, #] &, k]]]], 
   Floor[(k - 1)^2/2]];
\end{lstlisting}

\noindent Sequence subtraction operation:

\begin{lstlisting}
auxlist[list1_, list2_, i_] := 
  Block[{a = list1, b = list2, l},
    l = Min[Length[a], Length[b]]; 
    Array[a[[i + #]] - b[[#]] &, l - i]];

minus[list1_, list2_] := 
  Block[{a = list1, b = list2, l}, 
    l = Min[Length[a], Length[b]];
    Array[Min[auxlist[a, b, # - 1]] &, l]];
\end{lstlisting}

\noindent Ellipsoid embedding function $c_X(a)$ where ECHlist is (the beginning of) the sequence of ECH capacities of $X$:

\begin{lstlisting}
c[a_, ECHlist_] := Module[{p = Length[ECHlist], k, l, nn},
   k = Floor[(Sqrt[a^2 + 6 a + 1 + 8 a p] - 1 - a)/2];
   nn = ECHellipsoid[1, a, k];
   l = Min[p, Length[nn]];
   Max[Array[nn[[# + 1]]/ECHlist[[# + 1]] &, l - 1]]];
\end{lstlisting}

\noindent Creates a list of points $(a_i,c_X(a_i))$ where ECHlist is (the beginning of) the sequence of ECH capacities of $X$:

\begin{lstlisting}
clist[amin_, amax_, astep_, ECHlist_] := 
  Block[{l = Floor[(amax - amin)/astep]}, 
    Array[
     {amin + (# - 1) astep, c[amin + (# - 1) astep, ECHlist]} 
     &, l]];
\end{lstlisting}

\noindent Plots the volume curve, plus a vertical line at the location of the potential accumulation point:

\begin{lstlisting}
constraint[amin_, amax_, vol_, acc_] := 
  Plot[Sqrt[t/vol], {t, amin, amax}, PlotStyle -> Red, 
   Epilog -> {InfiniteLine[{acc, 0}, {0, 1}]}];
\end{lstlisting}

\begin{example} 
Let $X$ have negative weight expansion $(4;2,1)$. Make changes as necessary. 
Accpoint  gives the location of the potential accumulation point, so that one can choose the range of points to plot. 
The output of this example is in Figure \ref{fig:4pics}.

\begin{lstlisting}
ECHcap = Block[{seq = ECHball[100]}, 
    minus[minus[4*seq, 2*seq], seq]]; 
    
per = 3*4 - 2 - 1;    (* per = 3 b - sum b_i *)
vol = 4^2 - 2^2 - 1^2; (* vol = b^2 - sum b_i^2 *)
accpoint =  
    x /. NSolve[x^2 + (2 - per^2/vol ) x + 1 == 0, x][[2]]  

5.17022

aamin = 1;
aamax = 6;
aastep = 0.01;
Show[{constraint[aamin, aamax, vol, accpoint], 
  ListPlot[clist[aamin, aamax, aastep, ECHcap], 
    Joined -> True]}]
\end{lstlisting}
\end{example}

\newpage

\end{document}